\newtheorem{thm}{Theorem}
\newtheorem{lem}[thm]{Lemma}
\newtheorem{prop}[thm]{Proposition}
\newtheorem{conj}[thm]{Conjecture}
\newtheorem{cor}[thm]{Corollary}
\newtheorem{defe}[thm]{Definition}
\theoremstyle{remark}
\newtheorem{rem}[thm]{Remark}
\newtheorem{exam}[thm]{Example}
\newtheorem{ntn}[thm]{Convention}
\newcommand\myurl[1]{\url{#1}}
\newcommand{\nc}{\newcommand}
\nc{\ssec}{\subsection}
\nc{\on}{\operatorname}
\nc {\sE} {\mathscr{E}}
\nc {\sF} {\mathscr{F}}
\nc {\sL} {\mathscr{L}}
\nc {\sD}{\mathscr{D}}
\nc{\sA}{\mathscr{A}}
\nc {\cC} {\mathcal{C}}
\nc {\cG} {\mathcal{G}}
\nc {\cV}{\mathcal{V}}
\nc {\cK}{{k(\!(s)\!)}}
\nc {\cE} {\mathcal{E}}
\nc {\cO}{\mathcal{O}}
\nc {\cF}{\mathcal{F}}
\nc {\cZ}{\mathcal{Z}}
\nc {\cD}{\mathcal{D}}
\nc{\cDt}{\mathcal{D}^\times}
\nc {\cH}{\mathcal{H}}
\nc {\bZ}{\mathbb{Z}}
\nc {\bQ}{\mathbb{Q}}
\nc {\bQl}{\overline{\mathbb{Q}}_\ell}
\nc {\bQlt}{\bQl^\times} 
\nc {\FG}{\mathrm{FG}}
\nc {\dR}{\mathrm{dR}}
\nc {\uG} {\underline{G}}
\nc {\cB}{\mathcal{B}}
\nc {\cU}{\mathcal{U}}
\nc{\rat}{\mathrm{rat}}
\nc{\Hyp}{\mathscr{Hyp} }
\nc {\fF}{\mathfrak{F}}
\nc {\fB}{\mathfrak{B}}
\nc {\fx}{\mathfrak{x}}
\nc {\fy}{\mathfrak{y}}
\nc {\fb}{\mathfrak{b}}
\nc {\fk}{\mathfrak{k}}
\nc {\fI}{\mathfrak{i}}
\nc {\fJ}{\mathfrak{j}}
\nc {\fg} {\mathfrak{g}}
\nc {\fu} {\mathfrak{u}}
\nc {\fl} {\mathfrak{l}}
\nc {\fn} {\mathfrak{n}}
\nc {\cP} {\mathcal{P}}
\nc {\ft}{\mathfrak{t}}
\nc {\fz} {\mathfrak{z}}
\nc {\fc}{\mathfrak{c}}
\nc {\fh}{\mathfrak{h}}
\nc {\fp}{\mathfrak{p}}
\nc{\bone}{\mathbf{1}}
\nc{\tg} {\mathtt{g}}
\nc {\hfg} {\widehat{\fg}}
\nc {\hG} {\check{G}}
\nc {\bGm} {\mathbb{G}_m}
\nc {\bGa} {\mathbb{G}_a}
\nc {\bL}{\mathbf{L}}
\nc {\bK}{\mathbf{K}}
\nc {\bJ}{\mathbf{J}}
\nc {\bI}{\mathbf{I}}
\nc{\bC}{\mathbb{C}}
\nc{\bV}{\mathbb{V}}
\nc{\bP}{\mathbb{P}}
\nc{\bA}{\mathbb{A}}
\nc {\Q}{\mathrm{Q}}
\nc {\diag}{\mathrm{diag}}
\nc {\ev}{\mathrm{ev}}
\nc {\Res}{\mathrm{Res}}
\nc {\Fl}{\mathcal{F}\ell}
\nc {\Ad}{\mathrm{Ad}}
\nc {\pr}{\mathrm{pr}}
\nc{\Sl}{\mathfrak{sl}}
\nc {\gl}{\mathfrak{gl}}
\nc{\ra}{\rightarrow}
\nc {\quo}{\mathopen{ /\!/}}
\nc{\GL}{\mathrm{GL}}
\nc{\SL}{\mathrm{SL}}
\nc{\PGL}{\mathrm{PGL}}
\nc {\Bun}{\mathrm{Bun}}
\nc         {\rar}[1]       {\stackrel{#1}{\longrightarrow}}
\nc {\fa}{\mathfrak{a}}
\nc{\Hitch}{\mathrm{Hitch}}
\nc{\RS}{\mathrm{RS}}
\nc {\tp}{\mathfrak{p}}
\nc {\cA}{\mathcal{A}}
\nc {\cN}{\mathcal{N}}
\nc {\cW}{\mathcal{W}}
\nc{\opp}{\mathrm{opp}}
\nc {\Ind}{\mathrm{Ind}}
\nc {\sAn}{\mathrm{can}}
\nc {\Vac}{\mathrm{Vac}}
\nc {\Op}{\mathrm{Op}}
\nc {\Lg}{\check{\fg}}
\nc {\LV}{\check{V}}
\nc {\Lh}{\check{h}}
\nc {\LG}{\check{G}}
\nc {\Spec}{\mathrm{Spec}}
\nc {\End}{\mathrm{End}}
\nc {\rX}{\mathring{X}}
\nc {\sW}{\mathscr{W}}
\nc {\sH}{\mathscr{H}}
\nc {\sV}{\mathscr{V}}
\nc {\geom}{\mathrm{geom}}
\nc {\Irr}{\mathrm{Irr}}
\nc {\fm}{\mathfrak{m}}
\nc {\aff}{\mathrm{aff}}
\nc {\Aut}{\mathrm{Aut}}
\nc {\cJ}{\mathcal{J}}
\nc {\fs}{\mathfrak{s}}
\nc {\Stab}{\mathrm{Stab}}
\nc {\tw}{{\widetilde{w}}}
\nc {\gen}{\mathrm{gen}}
\nc {\genn}{\mathrm{genn}}
\nc {\sss}{\mathrm{ss}}
\nc {\spp}{\mathrm{sp}}
\nc {\Hom}{\mathrm{Hom}}
\nc {\bm}{\mathbf{m}}
\nc {\HG}{\mathcal{HG}}
\nc {\Gal}{\mathrm{Gal}}
\nc{\tP}{\mathtt{P}}
\nc{\tL}{\mathtt{L}}
\nc{\tU}{\mathtt{U}}
\nc{\tW}{\widetilde{W}}
\nc{\Hk}{\on{Hk}}
\nc {\cL}{\mathcal{L}}
\nc {\talpha}{\widetilde{\alpha}}
\nc {\tQ}{{\widetilde{Q}}}
\nc {\ochi}{\overline{\chi}}
\nc {\tdelta}{\widetilde{\Delta}}
\nc {\wt}{\mathrm{wt}}
\nc {\fQ}{\mathfrak{Q}}
\nc {\Rep}{\mathrm{Rep}}
\nc {\Hecke}{\mathrm{Hecke}}
\nc {\Gr}{\mathrm{Gr}}
\nc {\GR}{\mathrm{GR}}
\nc {\IC}{\mathrm{IC}}
\nc {\Std}{\mathrm{Std}} 
\nc {\Db}{\mathrm{D}^{\mathrm{b}}}
\nc {\tr}{\mathrm{tr}}
\nc {\ur}{\mathrm{ur}}
\begin{document} 
\renewcommand{\thepart}{\Roman{part}}

\renewcommand{\partname}{\hspace*{20mm} Part}
\subjclass[2010]{14D24, 20G25, 22E50, 22E67}
\keywords{Hypergeometric local systems, rigid automorphic data, Hecke eigensheaves, geometric Langlands}
\address{School of Mathematics and Physics, The University of Queensland} 
\email{masoud@uq.edu.au}

\address{Department of Mathematics, California Institute of Technology} 
\email{lyi@caltech.edu}

\date{\today}

\begin{abstract} Generalised hypergeometric sheaves are rigid local systems on the punctured projective line with remarkable properties. Their study originated in the seminal work of Riemann on the Euler--Gauss hypergeometric function and has blossomed into an active field with connections to many areas of mathematics. In this paper, we construct the  Hecke eigensheaves whose eigenvalues are the irreducible hypergeometric local systems, thus confirming a central conjecture of the geometric Langlands program for hypergeometrics. The key new concept is the notion of hypergeometric automorphic data. We prove that this automorphic data is generically rigid (in the sense of Zhiwei Yun) and identify the resulting Hecke eigenvalue with hypergeometric sheaves. The definition of  hypergeometric automorphic data in the tame case involves the mirabolic subgroup, while in the wild case, semistable (but not necessarily stable) vectors coming from principal gradings intervene. \end{abstract} 

\title{Geometric Langlands for hypergeometric sheaves} 
\author{Masoud Kamgarpour and Lingfei Yi} 
\date{\today} 
\maketitle

\tableofcontents

\section{Introduction} 
 
\subsection{Overview} 
Let $k$ be a field, $X$ a smooth projective curve over $k$, and $G$ a reductive group over $k(X)$ with Langlands dual group $\LG$. The goal of the geometric Langlands program is to establish a duality between moduli of $\LG$-local systems on $X$ and moduli of $G$-bundles on $X$ \cite{LaumonLanglands, BD, Frenkel, GaitsgoryRecent, BenZviNadler}.  
The following is a core conjecture of the field: 

\begin{conj} \label{c:main}  Let $S\subset X$ be a finite set. 
For every irreducible 
$\LG$-local system $E$ on $X-S$, there exists a (non-zero) perverse sheaf $\sA=\sA_E$  on the moduli of $G$-bundles on $X$, equipped with appropriate level structure on $S$, such that $\sA$ is a Hecke eigensheaf with eigenvalue $E$. 
\end{conj}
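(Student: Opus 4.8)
The statement as phrased is the full geometric Langlands existence conjecture, which is out of reach in general; the realistic plan is to establish it for the class of $E$ relevant to this paper, namely the cohomologically rigid $\LG$-local systems arising as hypergeometric sheaves, by adapting Yun's theory of rigid automorphic data. The starting point is to package the ramification of $E$ at $S$ into \emph{hypergeometric automorphic data}: at each $x \in S$ a pro-algebraic compact-open subgroup $K_x$ of the loop group $G(\cK)$ together with a one-dimensional representation $\chi_x$ of a suitable quotient. As the abstract indicates, in the tame case the mirabolic subgroup furnishes $K_x$, while at the wild point one uses the subgroup and character attached to a principal grading, with the crucial subtlety that one must work with \emph{semistable but not necessarily stable} vectors. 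The first step is thus to write this data down precisely and verify that it is well defined.

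The second and central step is to prove that the hypergeometric automorphic data is \emph{generically rigid} in Yun's sense. Concretely, one forms the moduli stack $\Bun$ of $G$-bundles on $X = \bP^1$ with the level structures prescribed by $(K_x,\chi_x)_{x\in S}$, and one must show that a dense open substack is a single point with automorphism group exactly the expected torus, equivalently that the corresponding adelic double coset is a single orbit with the expected stabiliser. This is a Riemann--Roch / dimension count combined with the classification of $G$-bundles on $\bP^1$ (Grothendieck) and an analysis of their automorphisms; the wild local condition is what forces the count to vanish, and checking that semistability (rather than stability) is precisely what is required is where the argument is most delicate.

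Granting rigidity, the third step is formal, following Yun: the rigid open substack carries a canonical rank-one local system built from $\chi_x$ together with Artin--Schreier and Kummer sheaves, and cleanly extending it over $\Bun$ produces a perverse sheaf $\sA_E$ which is automatically a Hecke eigensheaf, since the Hecke correspondence restricted to the rigid locus is generically an isomorphism onto a graph. Its eigenvalue is then \emph{some} irreducible $\LG$-local system $E'$ on $X-S$, a priori known only through its prescribed local monodromy and formal type at $S$; nonvanishing of $\sA_E$ also follows, as the clean extension of a nonzero sheaf on a dense open is nonzero.

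The final, and probably hardest, step is to identify $E'$ with the prescribed hypergeometric sheaf $E$. For $\LG=\GL_n$ this can be done by computing the Frobenius traces of $E'$ --- via the trace of Hecke operators over a finite field, i.e. a point count on the Hecke stack twisted by $\sA_E$ --- and matching them with the classical product formula for hypergeometric sheaves; alternatively one argues via the cohomological/Mellin description of hypergeometrics (Katz) together with the rigidity of both sides, using that a rigid local system is pinned down by its local data up to the finite-dimensional moduli of such data. I expect this eigenvalue computation, rather than the construction itself, to be the main obstacle, because it requires a genuinely global input --- a trace computation, or a comparison of two rigid objects --- as opposed to the essentially local and combinatorial verification of the automorphic data and its rigidity.
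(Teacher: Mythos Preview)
Your outline matches the paper's strategy: define hypergeometric automorphic data (mirabolic at $1$ in the tame case, $J=B_\phi P(1)$ with a semistable functional $\phi$ at $\infty$ in the wild case), prove the data is \emph{strictly} rigid, invoke Yun's theorem to obtain a Hecke eigensheaf, and identify the eigenvalue via a Frobenius trace comparison with Katz's hypergeometric sums.

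Two points deserve correction. First, rigidity is not established by a Riemann--Roch dimension count: the numerical requirement \eqref{eq:numerical} is only a necessary sanity check, and the real work (Propositions~\ref{p:tameAut} and~\ref{p:wildAut}) is to show that the generic $\cG'$-bundle has \emph{trivial} automorphism group (not a torus), and then to exhibit, for every non-generic bundle, a one-dimensional subscheme of $(\Aut_{\cG'}(\cE))^\circ$ on which the character sheaf $\gamma_S$ is nonconstant, so that the bundle is irrelevant. Second, your assessment of where the difficulty lies is inverted relative to the paper: the eigenvalue computation in \S\ref{s:tame eigenvalue}--\S\ref{s:wild eigenvalue} is a direct manipulation of trace sums, whereas the wild rigidity argument in \S\ref{s:wildRigidity} is identified as the most delicate part, requiring a representation-theoretic characterisation of barycentres (Proposition~\ref{p:simple affine roots}) and a careful orbit analysis in $\Bun_{\cG'}$ where the relevant bundle is not the trivial one.
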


By a local system, we mean either a lisse $\ell$-adic sheaf in  characteristic $p\neq \ell$ or a flat connection in characteristic zero, in which case perverse sheaves should be replaced with holonomic $\cD$-modules. For recollections on the notion of ramified Hecke eigensheaves, see \S \ref{s:eigensheaves}. 
Our main result is the following: 
   
\begin{thm}\label{t:mainTh} 
Conjecture \ref{c:main} is true for all irreducible (generalised) hypergeometric local systems. 
\end{thm}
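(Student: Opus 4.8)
The strategy is to construct, for each irreducible hypergeometric local system $E$ on $\mathbb{P}^1 - S$ (where $S$ consists of $0$, $\infty$, and possibly one further point $1$, depending on whether $E$ is tame or wild at these points), a specific \emph{automorphic datum} in the sense of Yun, which I will call the \emph{hypergeometric automorphic datum}, and to verify two things: (i) it is rigid (at least generically), so that Yun's machinery produces a Hecke eigensheaf on $\Bun_G$ with level structure at $S$; and (ii) the Hecke eigenvalue attached to this datum is precisely the hypergeometric local system $E$. First I would recall the classification of irreducible generalised hypergeometric sheaves — as rigid local systems on $\mathbb{G}_m$ or on $\mathbb{P}^1 - \{0,1,\infty\}$ — together with their local monodromies at the punctures, expressed in terms of the relevant dual group $\LG$; the shape of the local monodromy (a principal unipotent or a regular semisimple element at the finite punctures, and a specific wild parameter at $\infty$) dictates the choice of local data. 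The key local ingredients are, in the tame case, the \emph{mirabolic subgroup} and a suitable character/rank-one local system on it, and in the wild case, the principal grading on $\fg$ together with a semistable (but possibly non-stable) vector in a graded piece, to which one attaches a character sheaf via a Kloosterman-type or additive-character construction.

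Next I would set up the global geometry: the moduli stack $\Bun_G(\mathbb{P}^1)$ with level structure imposed at the points of $S$ according to the local data just described, and the Hecke correspondences away from $S$. Following Yun's framework, the heart of the argument is the rigidity statement: one must show that the relevant adelic double-coset groupoid — $G(F) \backslash G(\mathbb{A}) / K$, with $K$ the product of the chosen compact-type subgroups at $S$ and $G(\cO)$ elsewhere — has, after the appropriate characters are imposed, a unique relevant point with controlled (finite, or generically finite) automorphisms. Concretely this reduces, via a Bruhat-type decomposition on $\mathbb{P}^1$ and the structure theory of $G$ and its parabolics, to a combinatorial/representation-theoretic count: one analyses which $G$-bundles on $\mathbb{P}^1$ admit compatible reductions at $0$, $1$, $\infty$, and checks the vanishing of the relevant cohomology (the obstruction and automorphism spaces). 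This is where I expect the main difficulty to lie: the presence of semistable-but-not-stable vectors in the wild case means the naive stability/rigidity arguments do not apply verbatim, and one has to work with the coarser notion of \emph{generic} rigidity — showing rigidity holds on a dense open substack of the parameter space of the automorphic datum — and carefully track the finite group of symmetries that survives.

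Finally, granting rigidity, Yun's construction yields a Hecke eigensheaf $\sA_E$ automatically, and it remains to identify its eigenvalue. For this I would compute the local components of the eigenvalue at each puncture by a local calculation — matching the restriction of the eigenlocal system at $0$ and at the finite tame point with the monodromy prescribed by the mirabolic data (this is essentially a computation with induced representations / the geometry of the mirabolic flag variety), and matching the wild part at $\infty$ with the prescribed slope and formal type, using a stationary-phase or Mellin-transform argument analogous to Heinloth--Ngô--Yun's treatment of Kloosterman sheaves. Rigidity forces the global eigenvalue to be determined by its local components together with irreducibility, and since the hypergeometric sheaves are themselves rigid with exactly these local monodromies, the eigenvalue must be $E$. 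The cleanest way to organise the last step is to invoke the rigidity of hypergeometric local systems on the Galois side (Katz): two local systems with the same local data that are both cohomologically rigid must coincide, so once the local data match, one is done. I would also separate the tame case (treated first, as a warm-up, where the eigensheaf lives on $\Bun_G$ with parahoric-type level structure) from the wild case (which requires the principal-grading input and the finer generic-rigidity analysis), and handle general $\LG$ by reduction to the classical groups where the explicit hypergeometric constructions are available.
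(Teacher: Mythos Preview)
Your overall architecture---define a hypergeometric automorphic datum in Yun's sense, prove it rigid, invoke Yun's theorem to obtain a Hecke eigensheaf, then identify the eigenvalue---matches the paper's. The choice of local ingredients (mirabolic at the tame puncture, principal grading plus a semistable functional at the wild point) is also on target. Two points deserve correction, one terminological and one substantive.

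\textbf{On rigidity.} You have misread the phrase ``generically rigid.'' In the paper (and in Yun's framework) this does \emph{not} mean that rigidity holds only on a dense open of some parameter space of automorphic data; it means that on each connected component of $\Bun_{\cG'}$ the unique relevant orbit is the \emph{generic} (i.e.\ open, minimal-automorphism) one. The paper in fact proves \emph{strict} rigidity in the sense of Definition~\ref{d:rigidity}: a single relevant point per component, with trivial automorphism group. The semistable-but-not-stable functional does not force a weakening of rigidity; rather, it forces one to enlarge the level subgroup at $\infty$ from $P(1)$ to $J=B_\phi P(1)$, where $B_\phi$ is a Borel of the stabiliser of $\phi$. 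With this correct choice of $J$, the rigidity proof goes through in the strict sense (Proposition~\ref{p:wildAut}), by exhibiting on every non-generic orbit a one-dimensional subscheme of automorphisms on which the character sheaf is nontrivial.

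\textbf{On the eigenvalue.} Here your route diverges from the paper's, and your version has a gap. You propose to compute the local monodromies of the Hecke eigenvalue at each puncture and then invoke Katz's rigidity of hypergeometric sheaves to conclude. The difficulty is that you give no mechanism for \emph{computing} those local monodromies from the automorphic datum: the phrases ``essentially a computation with induced representations'' and ``a stationary-phase or Mellin-transform argument'' are not arguments, and a genuine local--global compatibility statement of the required precision (especially at the wild point) is not available off the shelf in this setting. The paper avoids this entirely. It gives an explicit lattice description of the generic locus of $\Bun_\cG$ and of the restricted Hecke correspondence $\mathring{\mathrm{GR}}_{\omega_1}$ (diagrams \eqref{eq:HeckeTame} and Proposition~\ref{p:wildHecke}), writes the eigenvalue as $E_{\Std}=\pi_{2,!}\pi_1^*\cL[n-1]$, and then compares the Frobenius trace of this global expression directly with Katz's hypergeometric sum formula (\S\ref{sss:hyp sum}). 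The match is established by an elementary change of variables and a Gauss-sum factorisation, with no appeal to rigidity on the Galois side. If you want to salvage your local-monodromy route, you would need to supply an independent computation of the restriction of $E_{\Std}$ to the formal punctured disc at $\infty$; absent that, the paper's direct global computation is the argument that actually closes.
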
 

We refer the reader to \S \ref{s:main} for a precise formulation.
We prove this theorem by first constructing a moduli stack of bundles with appropriate level structures on $\bP^1$, and then proving that each connected component of this stack supports a unique irreducible  perverse sheaf satisfying a suitable equivariance condition. Thus, we obtain a ``rigid automorphic data'' in the sense of \cite{YunCDM}. The main theorem of \emph{op. cit.} then implies that this perverse sheaf is a Hecke eigensheaf. Finally, we use an explicit realisation of the relevant part of the Hecke stack to show that the resulting eigenvalue is isomorphic to a hypergeometric local system. 

In the rest of the introduction, we briefly review the notion of hypergeometric local systems and discuss previous progress on Conjecture \ref{c:main}. We then give an outline of the proof of Theorem \ref{t:mainTh} and indicate potential applications.

\subsection{Hypergeometric local systems} \label{ss:hypergeometrics} 
Hypergeometric \emph{functions} have a long and celebrated history, going back to the works of Wallis, Euler, Gauss, Kummer, and Riemann. They have found applications to numerous areas such as the theory of special functions \cite{AndrewsAskeyRoy}, Lie theory \cite{Koornwinder}, number theory \cite{ZagierDifferentialEquation, Wadim}, conformal field theory \cite{Varchenko}, integrable systems \cite{Opdam}, and mirror symmetry \cite{Horja, ZagierZinger, CortiGolyshev, LT}.

\subsubsection{Local systems} 
The geometry underpinning hypergeometric functions emerged from Riemann's study of the local system of solutions of the Euler--Gauss hypergeometric differential equation. Using the remarkable properties of this local system, Riemann was able to give a conceptual explanation for hypergeometric identities of Gauss and Kummer. Riemann's investigation was a stunning success largely because the hypergeometric local system is \emph{rigid}, i.e., determined up to isomorphism as soon as one knows the local monodromies at the singular points.

\subsubsection{Hypergeometric sheaves} 
In the modern era, the subject of hypergeometric (and more generally rigid) local systems was rejuvenated in works of Katz \cite{KatzKloosterman, KatzBook, KatzRigid}. In particular, Katz defined the $\ell$-adic realisation of hypergeometric local systems as lisse sheaves over finite fields. For brevity, we call these \emph{hypergeometric sheaves}. Their Frobenius trace functions, known as \emph{finite hypergeometric functions}, were discovered independently by J. Greene \cite{Greene}, and have found applications in describing motives of elliptic curves and other varieties \cite{Ono, BCM, DembeleWadim}.

\subsubsection{Tame vs. wild} 
(Generalised) hypergeometric local systems come in two variants: 
\begin{itemize} 
\item[(i)] The \emph{tame hypergeometric local systems} are local systems on $\bP^1-\{0,1,\infty\}$, with tame singularities at $\{0,1,\infty\}$, and pseudo-reflection monodromy at $1$.
\item[(ii)]  The \emph{wild hypergeometric local systems} are local systems on $\bP^1-\{0,\infty\}$, with a tame singularity at $0$ and a wild singularity at $\infty$. 
\end{itemize}

While the above terminology is standard in positive characteristics, in characteristic zero one usually uses ``regular singular'' and ``irregular'' instead of tame and wild, respectively.  We note that hypergeometric local systems in characteristic zero arise from differential equations governing the generalised hypergeometric functions ${}_p F_q$, cf. \cite{Slater}.

\subsubsection{Important examples of hypergeometric local systems} \mbox{}
\begin{itemize} 
\item[(i)] The celebrated Riemann local system governing the Euler--Gauss hypergeometric function ${}_2 F_1$ is a  regular singular  hypergeometric local system of rank $2$.
\item[(ii)] The Bessel local system, defined using the  Bessel differential equation, is an irregular hypergeometric local system of rank $n$ and smallest possible slope, namely, $\frac{1}{n}$; see \cite{XuZhu} for a recent treatment. 
\item[(iii)] Deligne's Kloosterman sheaf is a wild $\ell$-adic hypergeometric local system of rank $n$ and smallest possible Swan break $\frac{1}{n}$ \cite{Deligne, KatzKloosterman}. This is the $\ell$-adic analogue of the Bessel local system.
\item[(iv)] Quantum cohomology of $(2n-1)$-dimensional quadrics gives rise to a rank $2n$ irregular hypergeometric local system of slope $\frac{1}{2n-1}$, cf. \cite[\S 3.3]{GorbounovSmirnov}, \cite{PRW}. For instance, for the $3$-dimensional quadric, one obtains the connection 
\[
d+
\begin{pmatrix}
0 & 1 & 0 & 0 \\
0 & 0 & 1 & 0\\
t & 0 & 0 & 1\\
0 & t & 0 & 0\\
\end{pmatrix}\frac{dt}{t}. 
\]
\end{itemize}  
The last example, which was explained to us by Thomas Lam, illustrates that hypergeometric local systems not of Riemann or Bessel type are also important in applications. This was one of our main motivations for studying geometric Langlands duality for general hypergeometric local systems.

\subsection{Geometric Langlands}  
Geometric Langlands program has its origins in Deligne's elegant proof of unramified geometric class field theory; cf. \cite{Guignard} where Deligne's approach is generalised to the ramified case. 
In a groundbreaking paper \cite{Drinfeld83}, Drinfeld generalised  Deligne's approach to construct unramified automorphic forms for $\GL_2$. Subsequently, Laumon  gave a conjectural generalisation of Drinfeld's construction to $\GL_n$ and formulated the unramified version of Conjecture \ref{c:main}, thus shifting focus to automorphic \emph{sheaves}.  Conjecture \ref{c:main} for rank one local systems amounts to geometric class field they (cf. \cite{Serre, Guignard}). 
For unramified local systems of arbitrary rank, this conjecture was proved by Frenkel, Gaitsgory, and Vilonen over a finite field \cite{FGV} and by Gaitsgory over an arbitrary field \cite{Gaitsgory}. In contrast, as detailed below, there has been sporadic progress on Conjecture \ref{c:main} for \emph{ramified} local systems of rank bigger than one.

\subsubsection{Ramified eigensheaves in positive characteristic} 

\begin{itemize} 
\item[(i)] If $E$ is a rank $2$ \emph{tame unipotent} local system (i.e., a tamely ramified local system with unipotent monodromy), the corresponding Hecke eigensheaf was essentially constructed by Drinfeld  \cite{Drinfeld87}. 

\item[(ii)] If $E$ is a tame unipotent local system of rank $3$, the corresponding Hecke eigensheaf was constructed by Heinloth \cite{Heinloth}, who also gave a conjectural construction in higher ranks. 

\item[(iii)] 
If $E$ is a tame unipotent  local system on $\bP^1$ minus two points, then Conjecture \ref{c:main} follows from \cite{AB, Bezrukavnikov}, cf. \cite[Theorem 4.16]{BenZviNadler}. This approach works for general split reductive groups. 

\item[(iv)] If $E$ is a tame unipotent local system of rank $2$ on $\bP^1$ minus four points, then the corresponding Hecke eigensheaf is discussed in \cite{DeBos}. 
 
\item[(v)] If $E$ is a (wild) Kloosterman sheaf of arbitrary rank, the corresponding Hecke eigensheaf was constructed by Heinloth--Ngo--Yun \cite{HNY}. This approach works for all quasi-split reductive groups and was further generalised by \cite{YunEpipelagic} to epipelagic local systems. 
 
 \item[(vi)] In \cite{YunGalois}, Yun constructs Hecke eigensheaves on $\bP^1-\{0,1,\infty\}$ whose Hecke eigenvalues are tamely ramified at $0$, $1$, and $\infty$, and used these eigenvalues to give a positive answer to a question of Serre regarding existence of motives with exceptional monodromy. 
 
\item[(vii)] If $E$ is a (tame or wild) hypergeometric local system of rank $2$, the corresponding Hecke eigensheaf was constructed by Yun \cite{YunCDM}. 
\end{itemize} 

As far as we know, this summarises what is known about Conjecture \ref{c:main} in positive characteristic. In particular, we see that aside from the Kloosterman case, Conjecture \ref{c:main} for (tame and wild) hypergeometric sheaves of rank bigger than $2$ was not previously treated in the literature.

\subsubsection{Progress in characteristic zero} 
The above results also hold in characteristic zero; however, there has been more progress in this setting because $\cD$-modules are better understood than $\ell$-adic sheaves: 
\begin{itemize} 
\item[(i)] In \cite{BD}, Beilinson and Drinfeld constructed unramified Hecke eigensheaves for simple groups as quantum fibres of the Hitchin map. This approach relates Conjecture \ref{c:main} to representations of affine Kac--Moody algebras at the critical level, cf. \cite{FrenkelGaitsgory, FrenkelRamified, FFL}.  More recently, Zhu has shown that, under favourable conditions, the Beilinson--Drinfeld machinery can be adapted to the ramified setting \cite{Zhu}. In practice, there are two major obstacles for such adaptation:
\begin{enumerate} 
\item[(a)] It is is not known if the quantum fibres of ramified Hitchin maps are non-zero, the essential difficulty being our inability to determine the dimension of the ramified global nilpotent cone. (This is, however, resolved in the tame unipotent case, cf. \cite{Faltings, BKV}.)  
\item[(b)] Their machinery works with \emph{opers}, which is a local system equipped with additional data. While it is known that every local system has an oper structure \cite{ArinkinOper}, it is difficult to control the resulting singularities (this is the problem of ``apparent singularities", cf. the introduction of \cite{KatzRigid})
\end{enumerate} 

\item[(ii)] Beilinson and Drinfeld have a different conjectural construction of unramified Hecke eigensheaves using chiral Hecke algebra. As far as we understand, this is still conjectural because one does not know how to show that the resulting Hecke eigen $\cD$-modules are non-zero, cf. \cite[\S 20.5]{FrenkelBenZvi}. 

\item[(iii)]
Beilinson and Drinfeld also proposed a categorical version of Conjecture \ref{c:main} as, roughly speaking, an equivalence between the category of $\hG$ local systems on $X$ and the category of $\cD$-modules on $\Bun_G$. For progress on the categorical correspondence, see \cite{Arinkin, ArinkinFedorov} for $G=\GL_2$ on $\bP^1$ minus $4$ points, \cite{ArinkinGaitsgory, GaitsgoryRecent} for arbitrary $G$ and $X$ in the unramified setting, and \cite{NadlerYun} for $G=\GL_2$ on $\bP^1$ minus $3$ points, in the tame unipotent setting. 

\item[(iv)] Donagi and Pontev \cite{DP19} have constructed Hecke eigensheaves for tame unipotent rank $2$ local systems on $\bP^1$ minus five points using Langlands duality for the Hitchin system and non-abelian Hodge theory \cite{DP09, DPHitchin}. 

\item[(v)] Kapustin and Witten interpreted the unramified geometric Langlands duality in the context of four dimensional super Yang--Mills theories, relating Langlands duality to $S$-duality \cite{WittenKapustin}. This was generalised to the regular singular setting in \cite{WittenGukov} and the irregular setting in \cite{Witten}. 
\end{itemize} 

As far as we understand, Conjecture of \ref{c:main} for general hypergeometric connections cannot be proved using the aforementioned results.

\subsection{Outline of our approach}  
Henceforth, we let $G=\GL_n$ and $X=\bP^1$ over a finite field $k$. (We emphasise that no restrictions on the characteristic of $k$ is necessary.) The proof of Theorem \ref{t:mainTh} proceeds as follows:
\begin{enumerate} 
\item[(a)] To every rank $n$ hypergeometric local system $\sH$, we associate a group scheme $\cG$ over $X$ that is generically isomorphic to $G$. Following Thomas Lam, we think of $\cG$ as the group scheme ``controlling''  $\sH$. 
Let $\Bun_\cG$ denote the moduli stack of $\cG$-bundles on $X$. One can think of $\Bun_\cG$ as a moduli of rank $n$ vector bundles on $X$, equipped with certain level structures at the ramification points. The technical part of the paper concerns giving a combinatorial description of  $\Bun_\cG$. This leads to an explicit description of the \emph{generic} locus, i.e. the locus where bundles have minimal automorphisms. 

\item[(c)] The stack $\Bun_\cG$ will carry an action of a certain group scheme $H$, 
which is equipped with a one-dimensional character sheaf $\cC$. Thus, we may speak of $(H, \cC)$-equivariant perverse sheaves on $\Bun_\cG$. 
The correct definition of $\cG$ and $(H,\cC)$ is one of the main contributions of this paper and is explained further below. The most subtle part is how to define $\cG$ and $(H,\cC)$ at $x=1$ in the tame setting and at $x=\infty$ in the wild setting.
Our choices are motivated by the local Langlands correspondence and the local--global compatibility; see \S \ref{s:main} for details. In particular, in the wild setting, the construction depends on  the choice of a  functional coming from a principal grading of $\gl_n$. This further solidifies the link between the Vinberg theory and the Langlands program \cite{GrossReeder, HNY, RY, YunEpipelagic}, though as far as we understand, it is the first time that semistable functionals which are \emph{not} stable play a central role.\footnote{Konstantin Jakob informs us that he and Yun have also encountered this phenomena \cite{JY}.}   

\item[(d)] 
The main ``rigidity result'' of the paper states that (up to local systems coming from the base field $k$)  there exists a unique $(H, \cC)$-equivariant irreducible perverse sheaf $\sA$ on each connected component of $\Bun_\cG$.  This is proved by showing that the only orbits that are ``relevant", i.e. support $(H,\cC)$-equivariant sheaves, are the generic orbits (i.e., the orbits with minimal automorphisms). The proof of this fact in the tame setting is modelled on \cite[Theorem 2.4.2]{YunGalois}. The proof in the wild setting is significantly more complicated than \cite[Lemma 2.3]{HNY} (which treats the Kloosterman case), because the level structure at $\infty$ in our case is more intricate and the relevant orbit is not the trivial bundle.\footnote{This is because the bundle in $\Bun_{\cG}^0$ whose automorphism has minimal dimension is not trivial; see \S \ref{ss:genericWild}.}  One of the ingredients of the proof (which may be new and of independent interest) is a representation-theoretic characterisation of barycentres of facets in the standard apartment; see the appendix for details. 

\item[(e)] A key result of \cite{YunCDM} implies that $\sA$ is a ``weak Hecke eigensheaf''. Roughly speaking, this means that $\sA$ is a Hecke eigensheaf on each connected component of $\Bun_\cG$. 
We use an argument of \cite{HNY} to show that connected components are isomorphic; thus, we actually have a genuine Hecke eigensheaf.

\item[(f)] Finally, following the example of \cite[\S 3]{HNY}, we compute the relevant part of the Hecke correspondence explicitly to obtain an expression for the Hecke eigenvalue $\sE$. By comparing the Frobenius trace functions of $\sE$ and $\sH$, we conclude that $\sE$ is geometrically (i.e. over $\overline{k}$) isomorphic to $\sH$, thus establishing Theorem \ref{t:mainTh}. 

\end{enumerate} 

\subsubsection{Rigid automorphic data} 
In the main body of the text, we use the language of rigid automorphic data. The notion of rigidity for automorphic representations was developed by Z. Yun, building on earlier works of Heinloth, Ngo, and Yun \cite{HNY, YunGalois, YunCDM, YunEpipelagic}. One of Yun's main results is that whenever one has a rigid automorphic data satisfying appropriate properties, one obtains a Hecke eigensheaf. Furthermore, the resulting Hecke eigenvalue is expected to be rigid. Conversely, 
it is expected that whenever one has a rigid local system $E$, there exists a rigid automorphic data giving rise to a Hecke eigensheaf $\sA$ with eigenvalue $E$. We confirm this expectation for irreducible hypergeometric local systems of arbitrary rank.

\subsection{The group schemes $\cG$ and $H$} 
We now give a description of the group schemes $\cG$ and $H$, referring the reader to \S \ref{s:main} for more details.

\subsubsection{Tame setting}\label{sss:tameIntro}
Let $\sH$ be a tame rank $n$ hypergeometric local system on $\bP^1-\{0,1,\infty\}$ with pseudo-reflection monodromy at $x=1$. We define the group scheme $\cG$ controlling $\sH$ by  
\begin{equation}\label{eq:tameGroup} 
\cG(\cO_x) =
\begin{cases}
I^\opp(1) & x=0; \\ 
\mathtt{Q} & x=1; \\ 
I(1) & x=\infty; \\
G(\cO_x) & \textrm{otherwise}. 
\end{cases} 
\end{equation} 

Here $I$ is the Iwahori and $I(1)$ is its first Moy--Prasad subgroup (equivalently, the pro-unipotent radical). 
The group $\mathtt{Q}$ is defined to be the preimage of $Q$ under reduction map $G(\cO_1)\rightarrow G(k)$, where  $Q\subset G$ is the mirabolic subgroup, i.e., 
\[
Q:=
\begin{pmatrix}
\mathrm{GL_{n-1}} & *\\
0                 & 1
\end{pmatrix}.
\] 
The group $H$ is defined by 
\[
H:= I^\opp/I^\opp(1) \times I/I(1) \simeq T\times T\simeq \bGm^n\times \bGm^n 
\]
If $n=2$, the group schemes $\cG$ and $H$ coincide with the ones constructed in  \cite[\S 2.8.5]{YunCDM}.

\subsubsection{Wild setting}\label{sss:wildIntro}
Let $\sH$ be a rank $n$ hypergeometric local system on $\bP^1-\{0,\infty\}$ with a wild singularity at $\infty$ of highest break $\frac{1}{d}$ where $1\leq d\leq n$. Let $P$ be the parahoric group associated to $\check{\rho}/d$, where $\check{\rho}$ is the half-sum of positive coroots. Let $P(j)$ be the $j$th Moy--Prasad subgroup of $P$. We define the group scheme $\cG$ controlling $\sH$ by
\begin{equation} \label{eq:wildGroup} 
\cG(\cO_x) \simeq
\begin{cases}
I^\opp(1) & x=0; \\ 
P(2) & x=\infty; \\
G(\cO_x) & \textrm{otherwise}. 
\end{cases} 
\end{equation} 
The construction of the group $H$ depends on the choice of an appropriate semistable linear functional $\phi$ on $P(1)/P(2)$; see \eqref{eq:phisp} for the definition of this functional. Recall that the reductive quotient $P/P(1)$ acts on the vector space $P(1)/P(2)$ and therefore also on the dual space. Let $B_\phi$ be a Borel subgroup of the stabiliser of $\phi$ in $P/P(1)$.  Then 
\[
H:=(I^\opp/I^\opp(1)) \times (B_\phi \ltimes P(1)/P(2)).
\]
See \S\ref{ss:wild data} for more details. Note that if $d=n$ (so that $\sH$ is the Kloosterman sheaf), $P$ is the Iwahori, and the functional $\phi$ can be any ``generic functional'' in the sense of \cite[\S 1.3]{HNY}. In this case, the stabiliser of $\phi$ is the centre; thus, $H=I^\opp/I^\opp(1)\times I(1)/I(2)$, in agreement with \emph{op. cit.}. We note that this is the \emph{only} case when $\phi$ is stable. (Indeed, $n$ is the only elliptic number of $\gl_n$).

\subsection{Hecke realisations of hypergeometrics} Our main results can be interpreted as giving a new realisation of hypergeometric local systems, namely, a realisation via Hecke correspondences. We explain what this means in the tame and wild setting. 

\subsubsection{Tame setting}\label{sss:HeckeTame} The ``relevant part'' of the Hecke correspondence (see \eqref{eq:relevantHecke}) is given by restriction to $\bP^1-\{0,1,\infty\}$ of the diagram 
\begin{equation}\label{eq:HeckeTame}
\begin{tikzcd}
&  (\bGm-\{1\})^n \arrow{dl}[swap]{\pi_1} \arrow{dr}{\pi_2}  & \\
\bGm^{2n} &       &  \bP^1-\{0,\infty\},
\end{tikzcd} 
\end{equation}
where 
\[
\pi_2(y_1,...,y_n)=\prod_{i=1}^n(y_i-1)/y_i,
\]
and
\[
\pi_1(y_1,...,y_n)=((1-y_1)^{-1}, ..., (1-y_n)^{-1} , -y_1^{-1}, y_2^{-1}, ..., y_{n-1}^{-1}, -y_n^{-1} ).
\]
The Hecke eigenvalue is then given by 
\begin{equation}\label{eq:tameEigenvalue} 
\sE= \pi_{2,!} \pi_1^* \cL[n-1], 
\end{equation} 
where $\cL$ is a character sheaf defined using the hypergeometric data. As we shall see, $\sE$ is actually a tame hypergeometric local system.

\subsubsection{Wild setting} 
In the wild case, the relevant part of the Hecke correspondence (see \eqref{eq:relevantHecke} and Proposition \ref{p:wildHecke}) is given by the following diagram
\[
\begin{tikzcd}
&  \bGm^d\times(\bGm-\{1\})^{n-d} \arrow{dl} \arrow{dr}  & \\
\bGm^n\times\bGm^{n-d} \times \bGa&       &  \bP^1-\{0,\infty\}.
\end{tikzcd}
\]
The description of the maps here are more complicated and involve the combinatorics of the normalised Kac coordinate of ${\check{\rho}}/{d}$; see \S \ref{s:wild eigenvalue} for details. If $n=d$ and the map $\bGm^n\ra \bGm^n$ is the identity, we recover the Deligne correspondence defining the Kloosterman sheaf \cite{Deligne}.

\subsection{Potential applications} 

\subsubsection{Mirror symmetry} Our motivation for studying Langlands duality for hypergeometric local systems comes from mirror symmetry. 
Building on \cite{PRW}, Lam and Templier \cite{LT} noted that Bessel local systems are closely related to quantum connections of minuscule flag varieties and used geometric Langlands duality to settle mirror symmetry for these varieties. Subsequently, Thomas Lam noted that the quantum connection of smooth odd quadrics are closely related to certain hypergeometric local systems not of Bessel type. He asked what Langlands duality for these connections look like. The present paper is an attempt to answer his question. We hope that our solution has applications to mirror symmetry for smooth quadrics.

\subsubsection{Hypergeometric automorphic representations} \label{sss:hypergeomtricRep}
While our focus has been the \emph{geometric} Langlands program, this work also has implications for the \emph{classical} Langlands correspondence. Namely, recall that the global Langlands correspondence for functions fields, proved by L. Lafforgue \cite{Lafforgue}, establishes a bijection between irreducible rank $n$ local systems on $X-S$ and  irreducible cuspidal automorphic representations of $\GL_n$. We call a cuspidal automorphic representation \emph{hypergeometric} if it is mapped to a hypergeometric local system under the Langlands correspondence.
Lafforgue's bijection is not explicit; thus, a priori, it is not known what hypergeometric automorphic representations look like. Our work make these representation more explicit. 
Indeed, taking the Frobenius trace function of hypergeometric Hecke eigensheaves (which are explicitly constructed in this paper), one obtain the Hecke eigenform of hypergeometric automorphic representations. In \cite{Gross}, Gross gave a characterisation of the Kloosterman automorphic representation as the unique automorphic representation which is unramified on $\bP^1-\{0,\infty\}$, Steinberg at $0$, and simple supercuspidal at $\infty$. (A simple proof of Gross's result is given \cite[Lemma 2.1]{HNY}.) The analogous characterisation of hypergeometric automorphic representations is discussed in \S \ref{s:main}.

\subsubsection{Hypergeometric motives} 
Using the notion of convolution, Katz constructed hypergeometric motives over the ring of integers $R$ of appropriate number fields \cite[\S 8.17]{KatzBook}. These motives are lisse $\ell$-adic sheaves appearing in the cohomology of certain schemes over $R$. Specialising to a finite field (resp. complex numbers), one obtains the tame $\ell$-adic (resp. complex) hypergeometric local systems.\footnote{Wild hypergeometrics should also be motivic in the sense of \cite{FJ}.} It is tempting to conjecture that the expression \eqref{eq:tameEigenvalue} gives an alternative realisation of hypergeometric motives. We are currently unable to prove this conjecture because the key notion used in \cite{KatzBook}, namely Deligne's theorem on semi-continuity of the Swan conductor, applies only to schemes of relative dimension one (the map $p$ has relative dimension $n-1$). In another direction, Yun has used Langlands duality for Kloosterman sheaves to prove conjectures of Evans regarding moments of Kloosterman sums \cite{YunEvans}. We hope that our work has implications for understanding moments of general hypergeometric sums.

\subsubsection{Hypergeometric sheaves for reductive groups} 
One of the main achievements of \cite{HNY, YunGalois, YunCDM, YunEpipelagic} is that they constructed rigid automorphic data for general (quasi-split) reductive groups and used this to \emph{define} local systems with remarkable properties, e.g. having exceptional monodromy. Preliminary computations show that there is also a reductive analogue of general hypergeometric local systems. This is the subject of our work in progress.

\subsubsection{Beyond the $\ell$-adic setting} 
We state and prove our main results for $\ell$-adic hypergeometric local systems in positive characteristic. However, after appropriate modifications, our results also apply to hypergeometric local systems in characteristic zero. The $p$-adic companion (cf.\cite{Abe, Kedlaya}) of the hypergeometric sheaves are known as \emph{hypergeometric (overconvergent) F-isocrystals} and have been studied by, e.g., Miyatani \cite{Miyatani}. Thanks to the recent work of Xu and Zhu on the $p$-adic geometric Langlands program \cite{XuZhu}, our results can also be adapted to construct Hecke eigensheaves for hypergeometric $F$-isocrystals.

\subsection{Structure of the text} 
\subsubsection{}
Sections \ref{s:notation}--\ref{s:eigensheaves} are preliminaries and we recommend the reader skips them on the first go. In \S \ref{s:notation}, we set the notation (which is mostly standard). In \S \ref{s:hypergeometric}, we review Katz's theory of $\ell$-adic hypergeometric local systems. In \S \ref{s:integralModels}, we discuss the notion of an integral model, which is a convenient tool for discussing ramifications in the geometric Langlands program. In \S \ref{s:eigensheaves}, we explain what one means by a ramified Hecke eigensheaf. In \S \ref{s:rigid}, we give a simplified account of Yun's notion of rigid automorphic data.  

\subsubsection{} 
Sections \ref{s:main}--\ref{s:wild eigenvalue} contain the main results and their proofs. 
The main definitions and theorems appear in \S \ref{s:main}. Rigidity of hypergeometric automorphic data is proved in \S \ref{s:tameRigidity} (resp. \S \ref{s:wildRigidity}) for the tame (resp. wild) case. Determination of Hecke eigenvalue is achieved in \S \ref{s:tame eigenvalue} (resp. \S \ref{s:wild eigenvalue}) for the tame (resp. wild) case. 
Finally, we gather some results about inner principal gradings in the appendix.

\subsection{Acknowledgement} 
This paper fulfils a part of the vision of Zhiwei Yun for the role of rigidity in the geometric Langlands program. Our intellectual debt to the work of Yun and collaborators is obvious \cite{HNY, YunGalois, YunCDM, YunEpipelagic}. As noted above, this project was initiated in response to a question by Thomas Lam. We would like to thank him for raising this penetrating question and for many subsequent illuminating discussions. We would also like to thank Dima Arinkin, David Ben-Zvi, Jochen Heinloth, Konstantin Jakob, Paul Levy, Will Sawin, Ole Warnaar,  Daxin Xu, Zhiwei Yun, Xinwen Zhu, and Wadim Zudilin for helpful discussions. MK would like to especially thank Dan Sage for collaborations on rigid connections (\cite{KamgarpourSage0, KamgarpourSage}) and for teaching him about parahorics and Moy--Prasad subgroups. MK has been supported by two Australian Research Council Discovery Projects. LY has been supported by a CalTech Graduate Student Fellowship.


\section{Notation} \label{s:notation} 
In this section, we define the notation used in the rest of the article. Our notation is mostly standard, so the reader is encouraged to skip this section, referring to it when necessary.

\subsection{Geometric data} Let  $k$ be a finite field and  $\overline{k}$ an algebraic closure of $k$. Let  $\ell$ be a prime different from the characteristic of $k$. We frequently abuse notation and use the same letter to refer to a (ind-)scheme and its $k$-points. If $Y$ is an object over $k$, we sometimes write $\overline{Y}$ for $Y\otimes_k \overline{k} $. If $Y$ is an Artin stack over $k$, we let $D^b(Y)$ denote the derived category of $\bQl$-adic sheaves on $Y$ and $\mathrm{Perv}(Y)$ the subcategory of perverse sheaves.

\subsubsection{} 
Let $X=\bP^1$ be the projective line over $k$, $t$  a local coordinate at $0$, and $s=t^{-1}$ a local coordinate at $\infty$. Let $F=k(t)=k(s)$ denote the function field of $X$. For each $x\in X$,  let $\cO_x$ be the completed local ring at $x$ , $F_x$ its field of fractions, and $t_x$ a uniformiser. 
Let $\bA:=\bA_F=\otimes'_{x\in X} F_x$ denote the ring of adeles and $\cO_\bA:=\otimes_{x\in X} \cO_x$ its ring of integers. The set of closed points of $X$ is usually denoted by $|X|$; however, we abuse notation and denote this set by $X$ as well.

\subsection{Group-theoretic data} 
We note that many of our constructions work for general reductive groups; however, for simplicity, we assume throughout that  $G=\GL_n$. Thus, the Langlands dual  $\hG$ is also $\GL_n$. We prefer to use the notation $G$ and $\hG$ because  this reminds us which side of Langlands duality we are referring to.  Let $Z(G)$ and $Z(\hG)$ denote the centre of $G$ and $\hG$, respectively. Denote the Lie algebra of $G$ by $\fg=\mathfrak{gl}_n$.

\subsubsection{} 
Let $T$ and $B$ be the subgroups of diagonal and upper triangular matrices, respectively. Let $W\simeq S_n$ denote the Weyl group.
Let $X_*(T):=\Hom(\bGm, T)$ and $X^*(T):=\Hom(T,\bGm)$ denote the set of cocharacters and characters, respectively. Let $\cA:=X_*(T)\otimes \mathbb{R}$ denote the standard apartment and $\cA_\bQ:=X_*(T)\otimes \bQ$ the rational apartment. We  write $t=\diag(t_1,...,t_n)$ for elements of $T\simeq (k^\times)^n$ and define subgroups $T_j\subseteq T$ by 
\begin{equation}\label{eq:Tj}
T_j:=\{t=\mathrm{diag}(1,...,1, t_j, 1,...,1)\mid t_j \in k^{\times}\}. 
\end{equation}

\subsubsection{} 
Let $\Phi=\Phi(G)$ denote the root system of $G$. Then $\Phi=\{\alpha_{ij}\}$ where $i$ and $j$ run over all pairs of non-equal integers in $\{1,2,...,n\}$ and $\alpha_{ij}$ is the root whose root subspace is generated by $E_{ij}$. If $\alpha=\alpha_{ij}$, we sometimes write $E_\alpha$ for $E_{ij}$.
Let $E_{ij}^*$ denote the functional on $\fg$ defined by $E_{ij}^*(A):=a_{ij}$ for $A=(a_{ij})\in\fg$.

\subsubsection{}\label{sss:standardRoots}
Let $\check{\rho}$ denote the half sum of positive coroots. Thus, 
\[
\check{\rho} = \frac{1}{2}\diag(n-1, n-3, ..., -(n-3), -(n-1)).
\]
Let $\alpha_i:=\alpha_{i,i+1}$.
Then $\Delta=\Delta(G)=\{\alpha_1,...,\alpha_{n-1}\}$ is the set of standard simple roots for $G$. 
We write $\mathrm{ht}(\alpha)=\langle\alpha,\check{\rho}\rangle$ for the height of a root $\alpha$. The highest root is then $\theta:=\sum_{i=1}^{n-1}\alpha_i$. For each $\alpha \in \Phi$, we let $U_\alpha\subset G$ denote the corresponding one parameter unipotent subgroup. 
For a subgroup $H\subseteq G$, we write $\Phi(H)$ for the subset of roots $\alpha\in\Phi(G)$ satisfying $U_\alpha \subseteq H$.

\subsubsection{}
At various places in \S \ref{s:tameRigidity} and \S\ref{s:wildRigidity}, we use exponentials of nilpotent matrices. In fact, all exponentials that appear are of the form $\exp(X)=I+X$. Thus, no restriction on the characteristic of the base field $k$ is required.

\subsection{Loop group} 
For each $x\in X$, 
let $G(F_x)$  (res. $G(\cO_x)$) be the loop group (resp. the positive loop group). Let $\fg(F_x)$ and $\fg(\cO_x)$ denote the corresponding loop algebras. Let $\Phi^\aff$ denote the set of affine roots. Each affine root $\tilde{\alpha}$ can be written as a sum $\alpha+m$ where $\alpha \in \Phi$ and $m\in \bZ$. The corresponding one-parameter subgroup is 
$U_{\tilde{\alpha}} = U_\alpha(t_x^m)$. Let $\Delta^\aff=\Delta^\aff(G)=\{\alpha_0=1-\theta, \alpha_1,...,\alpha_{n-1}\}$ denote the set of standard simple affine roots.

\subsubsection{}
Let $\tW$ denote the Iwahori-Weyl group defined by 
\[
\tW:=N_{G(F_x)} (T(F_x))/T(\cO_x). 
\]
We have an inclusion $X_*(T)\hookrightarrow G(F_x)$,  given by the map $\lambda=(\lambda_1,...,\lambda_n)\mapsto \on{diag}(t_x^{\lambda_1},...,t_x^{\lambda_n})$. This gives rise to an exact sequence 
\[
1 \ra X_*(T)\ra \tW \ra W \ra 1.
\]
Permutation matrices give a section of the above short exact sequence, resulting in an isomorphism $\tW \simeq W\ltimes X_*(T)$, cf. \cite[Proposition 13]{PappasRapoport}.

\subsubsection{}\label{s:Omega} 
Let $\Omega$ denote the stabiliser in $\tW$ of an alcove. 
As explained in Lemma 14 of \emph{op. cit}, we have a short exact sequence
\[
1\ra W^\aff \ra \tW\ra X^*(Z(\hG))\ra 1, 
\]
where $W^\aff$ is the affine Weyl group. The group $\Omega \subset \tW$ maps isomorphically to $X^*(Z(\hG))$, leading to the semi-direct product decomposition 
\[
\tW\simeq W^\aff \rtimes \Omega.
\]
For $\GL_n$, we have a natural inclusion $\Omega \hookrightarrow G(F_x)$; moreover, the composition 
\begin{equation}\label{eq:Omega} 
\Omega \hookrightarrow G(F_x) \rar{\det} F_x^\times \rar{\on{ord}} \bZ
\end{equation} 
is an isomorphism. Let $\tw_1\in \Omega$ denote the preimage of $1$ under this isomorphism.

\section{Hypergeometrics sheaves} \label{s:hypergeometric}  
In this section, we recall the definition and some properties of $\ell$-adic hypergeometric local systems, which for brevity, are called \emph{hypergeometric sheaves}. We do not discuss the monodromy group of hypergeometric sheaves, as they are not used in the text. For further details, see \cite{KatzBook}.

\subsection{The definition of hypergeometric sheaves} 

\subsubsection{Convolution} 
Let $\bGm$ denote the multiplicative group $\Spec(k[t,t^{-1}])$, $\mu: \bGm\times \bGm \ra \bGm$ the multiplication, and $\iota: \bGm\ra \bGm$ the inversion map.  The convolution (with compact support) is the functor 
\[
\star: D^b(\bGm)\times D^b(\bGm)\ra D^b(\bGm), \qquad \cF\star \cG:=\mu_! (\cF\boxtimes \cG). 
\]

\subsubsection{Initial data} \label{sss:initialData} 
To talk about hypergeometrics, we need an initial data consisting of 
\begin{itemize} 
\item a pair of non-negative integers $(n,m)\neq (0,0)$; 
\item  a nontrivial additive character $\psi: k\ra \bQlt$; 
\item  multiplicative characters  $k^\times \ra \bQlt$ denoted by $\chi_1,...\chi_n$ and $\rho_1,...,\rho_m$. 
\end{itemize} 
Let $\overline{\psi}$, $\ochi_i$, $\overline{\rho}_j$ denote the inverse characters. Let $\cL_\psi$, $\cL_{\chi_i}$, and $\cL_{\rho_j}$ denote the $\ell$-adic local systems on $\bGm$ whose Frobenius trace functions are $\psi$, $\chi_i$, and $\rho_j$, respectively

\subsubsection{}
To the above initial data, Katz \cite[\S 8.2]{KatzBook} associated the (generalised) hypergeometric sheaf $\sH=\sH(\psi, \chi_1,...,\chi_n, \rho_1,...,\rho_m)$ as follows:   
\[
\sH:= \sH(\psi, \chi_1, \varnothing)\star \cdots\star \sH(\psi, \chi_n,  \varnothing)\star 
\sH(\psi, \varnothing, \rho_1)\star \cdots  \star \sH(\psi, \varnothing, \rho_m)[n+m-1],
\]
where 
\[
\sH(\psi, \chi_i, \varnothing) := \cL_\psi \otimes \cL_{\chi_i}, \qquad \textrm{and} \qquad \sH(\psi, \varnothing, \rho_j):= \iota^* (\cL_{\overline{\psi}} \otimes \cL_{\overline{\rho}_j}). 
\]
As noted in \cite[\S 8.2]{KatzBook}, 
$\iota^* \sH(\psi, \chi_1,...,\chi_n; \rho_1,...,\rho_m)\simeq \sH(\overline{\psi}, \overline{\rho}_1,...,\overline{\rho}_m, \ochi_1,...,\ochi_n)$. 
Thus,  without the loss of generality, we may (and do) assume that $m\leq n$.

\subsubsection{Kloosterman sheaves} 
For $m=0$, the hypergeometric sheaf $\sH(\psi, \chi_1,...,\chi_n; \varnothing)$ is nothing but the generalised Kloosterman sheaf \cite{KatzKloosterman}. When $\chi_i=1$ for all $i$, we recover Deligne's Kloosterman sheaf \cite{Deligne}.

\subsubsection{Finite hypergeometric functions}\label{sss:hyp sum}
Let $\sH=\sH(\psi, \chi_1,...,\chi_n, \rho_1,...,\rho_m)$ be a hypergeometric sheaf. Let $\tr_\sH: k^\times \ra \bQl$ denote its Frobenius trace function. As noted in \cite[\S 8.2.7]{KatzBook}, for every $a\in k^\times$, we have  
\[
\mathrm{tr}_\sH(a)=(-1)^{n+m-1}\sum_{x_1\cdots x_n=ay_1\cdots y_m}\psi(\sum_{i=1}^nx_i-\sum_{i=1}^my_i)\prod_{i=1}^n\chi_i(x_i)\prod_{i=1}^m\rho_i(y_i^{-1}).
\]
The function $\on{tr}_\sH: k\ra \bQl$ is known as a \emph{finite hypergeometric function}.

\subsubsection{Alternative realisation} 
The above explicit expression for the trace function allows one to give an alternative realisation of hypergeometric sheaves. Namely, consider the correspondence 
$$
\begin{tikzcd}
&  \bGm^n\times \bGm^{m}  \arrow{dl}[swap]{p} \arrow{dr}{q}  & \\
\bGa\times \bGm^n\times \bGm^m &       &  \bGm, 
\end{tikzcd}
$$
where 
\[
q(x_1,...,x_n, y_1,...,y_m):=(x_1...x_n)(y_1...y_m)^{-1}, 
\]
and 
\[
p(x_1,...,x_n, y_1,...,y_m):= (\sum_{i=1}^n x_i - \sum_{j=1}^m y_j, x_1,...,x_n, y_1^{-1},...,y_m^{-1}). 
\]
Let 
\[
\sF:=q_! p^* (\cL_\psi\boxtimes\cL_{\chi_1}\boxtimes \cdots\boxtimes \cL_{\chi_n} \boxtimes \cL_{\rho_1} \boxtimes\cdots\boxtimes \cL_{\rho_m})[n+m-1]. 
\]
Using induction on $m$ and $n$, one can show that $\sH\simeq \sF$. Alternatively, we can establish this isomorphism as follows. 
The $\ell$-adic complexes $\sH$ and $\sF$ have the same Frobenius trace function; thus, their classes in the Grothendieck group coincide. We will see below that if $\chi_i\neq \rho_j$, then $\sH$ is actually a simple local system. This gives another proof that $\sH\simeq \sF$, under the assumption that $\chi_i\neq \rho_j$.

\subsubsection{Algebraically closed base field} 
Instead of working over a finite field, we can work over an algebraically closed field $K$ of positive characteristic \cite[\S 8.3]{KatzBook}. 
Namely, for a finite subfield $k\subset K$, we can speak of the Artin--Schrier sheaf $\cL_{\psi}$ on $\bA^1 \otimes_k K$. Moreover, for any tame character $\chi$ of $\pi_1(\bGm\otimes_k K)$, we can speak of the Kummer sheaf $\cL_\chi$ on $\bGm\otimes_k K$. We then define the hypergeometric sheaf $\sH(\psi, \chi_1,...,\chi_n, \rho_1,...,\rho_m)$ exactly as above. 
If $\chi$'s and $\rho$'s are all of finite order, say defined over $k$, then these objects are just the base change to $K$ of the earlier defined objects on $k$. If $m=n$, then one can show that the resulting sheaf on $K$ is independent of $\psi$. This leads to a motivic description of tame hypergeometric sheaves \cite[\S 8.17]{KatzBook}.

\subsection{Basic properties} 
To avoid repeated mention of local systems coming from the base field, in this subsection we assume that we are working over an algebraically closed field. We say that $\chi_i$'s and $\rho_j$'s are \emph{disjoint} if $\chi_i\neq \rho_j$ for all $i$ and $j$. It follows from  \cite[Theorem 8.4.5 and Corollary 8.4.10.1]{KatzBook} that the complex $\sH[1]$ is perverse. 
Moreover, it is irreducible if and only if $\chi_i$'s and $\rho_j$'s are disjoint.

\subsubsection{Tame case} 
Suppose $m=n$ and $\chi_i\neq \rho_j$ for all $i, j$. According to \cite[Theorem 8.4.2]{KatzBook}, the sheaf $\sH$ is lisse on $\bGm-\{1\}$ with tame ramification at $0$, $1$, and $\infty$. As a representation of the inertia group $I(0)$, it is isomorphic to 
\[
\bigoplus_{\textrm{distinct $\chi$'s}} \cL_\chi \otimes \textrm{Unip}(\on{mult}(\chi)),
\]
where $\textrm{Unip}(\on{mult}(\chi))$ denotes the Jordan block of size multiplicity of $\chi$. As an $I(\infty)$-representation, it is isomorphic to 
\[
\bigoplus_{\textrm{distinct $\rho$'s}} \cL_\rho \otimes \textrm{Unip}(\on{mult}(\rho)).
\]
Finally, $I(1)$ acts as a pseudo-reflection with determinant $[x\mapsto x-1]^*\cL_{\Lambda}$, where $\Lambda:=\prod_j \rho_j / \prod_i \chi_i$.

\subsubsection{Wild case} \label{sss:wildHypergeometric}
Suppose $m<n$ and $\chi_i\neq \rho_j$ for all $i, j$. According to \cite[Theorem 8.4.2]{KatzBook}, $\sH$ is lisse on $\bGm$ with tame ramification at $0$ and wild ramification at $\infty$. As an $I(0)$-representation its description is exactly as in the tame case. As an $I(\infty)$-representation it has Swan conductor $1$ and is isomorphic to the direct sum 
\[
\sW \oplus \bigoplus_{\textrm{distinct $\rho$'s}} \cL_\rho \otimes \textrm{Unip}(\on{mult}(\rho)), 
\]
where $\sW$ is an $(n-m)$-dimensional wild local system with a single break $1/(n-m)$. The local rigidity theorem \cite[Theorems 8.6.3 and 8.6.4]{KatzBook}, implies that $\sW$ is isomorphic to a generalised Kloosterman sheaf.  (For the de Rham version of the local rigidity theorem, cf. \cite[Theorem 5]{KamgarpourSage0}.)

\subsubsection{Rigidity} 
Let $S$ be a finite subset of $\bP^1$ over an algebraically closed field. 
A local system $E$ on $\bP^1-S$ is said to be \emph{rigid} if $E$ is completely determined by the collection of representations of the inertia groups $I(x)$, $x\in S$. According to \cite[\S 8.5]{KatzBook}, if $\chi_i\neq \rho_j$ for all $i$ and $j$, then the hypergeometric sheaf $\sH(\psi; \chi_1,...,\chi_n; \rho_1,...,\rho_m)$ is rigid. This result is the key conceptual motivation for our approach to constructing hypergeometric Hecke eigensheaves.

\section{Integral Models} \label{s:integralModels}
The notion of integral model is a convenient tool for studying ramifications in the geometric Langlands program. In this section, we review some basic facts about them. For notational convenience, we continue to assume that $G=\GL_n$ and $X=\bP^1$. For the case of arbitrary reductive groups over general smooth projective curves, see, e.g., \cite{YunCDM, Yu}.

\subsection{Definition of integral models} 
An integral model for $G$ over $X$ is a smooth affine group scheme $\cG$ over $X$ together with an isomorphism $\cG|_{\Spec(F)}\simeq G$. A point $x\in X$ is called \emph{unramified} if $\cG(\cO_x)\simeq G(\cO_x)$; otherwise, $x$ is \emph{ramified}. Ramified points form a finite set $S\subset |X|$. We assume throughout that 
for each $x\in S$, $\cG(\cO_x)$ is a pro-algebraic subgroup of finite codimension in a parahoric $P_x\subset G(F_x)$.

\subsubsection{Construction from local data} 
If one is given a finite set $S\subset |X|$ and for each $x\in S$, a pro-algebraic group $K_x$ of finite codimension in some parahoric, then one can construct an integral model $\cG$ satisfying 
\[
\cG(\cO_x) \simeq 
\begin{cases} 
G(\cO_x) & x\in X-S;\\
K_x & x\in S.
\end{cases} 
\]
We refer the reader to \cite{Yu} for a construction of such integral models in the framework of the Bruhat--Tits theory. Note that Yu constructs the integral models in the local setting. The fact that they can be glued to define a group scheme over $X$ follows from the lemma of Beauville and Laszlo \cite{BL}; see also \cite[Lemma 3.18]{CGP}.

\subsection{Moduli of $\cG$-bundles}\label{s:stacks}
Let $\cG$ be an integral model as above. Let $\Bun_{\cG}$ be the moduli stack of $\cG$-bundles. If $\cG(\cO_x)\subseteq G(\cO_x)$ for all $x\in S$, then we can think of a $\cG$-bundle as a rank $n$-vector bundle equipped with extra data (such as flags, framings, etc.) at $S$. Note that as we have assumed $G=\GL_n$, all parahorics are conjugate to a subgroup of $G(\cO_x)$. Thus, we may (and do)  assume that $\cG(\cO_x)\subseteq G(\cO_x)$ for all $x$. (We shall see in the example of wild hypergeometrics, however, that it is convenient to allow parahorics not a priori in $G(\cO_x)$.)

\subsubsection{Dimension}
Let $\Bun_n:=\Bun_{\GL_n}$ denote the moduli stack of rank $n$ vector bundles on $X$. 
We have a forgetful map $\Bun_\cG \ra \Bun_n$ which is a locally trivial $\prod_{x\in S} G(\cO_x)/\cG(\cO_x)$-fibration. Thus, $\Bun_{\cG}$ is smooth and  
\begin{equation} \label{eq:dimBunG}
\dim(\Bun_{\cG})= \dim(\Bun_n) + \sum_{x\in S} \dim(G(\cO_x)/\cG(\cO_x))=
-\dim(G) + \sum_{x\in S}\dim(G(\cO_x)/\cG(\cO_x)). 
\end{equation}

\subsubsection{Generic locus}
For every $\cE\in \Bun_\cG$, $\Aut_\cG(\cE)$ is an algebraic group. The map $\Bun_{\cG}\ra \bZ_{\geq 0}$ defined by $\cE\mapsto \dim(\Aut_{\cG}(\cE))$ is upper semi-continuous;  therefore, the substack of $\Bun_\cG$ consisting of bundles of minimal dimension is open. We call this the \emph{generic locus} of $\Bun_\cG$.

\subsubsection{Uniformisation} 
Weil's adelic uniformisation (cf. \cite[\S 2.4]{YunCDM}) states that we have a canonical bijection 
\[
\Bun_{\cG}(k)\simeq G(F) \backslash G(\bA_F)/ \cG(\cO_\bA).
\]
In favourable situations, we also have ``one-point uniformisation''. For instance, it is proved in \cite{HeinlothUniformisation} that we have one-point uniformisation whenever $\cG$ is an integral model for a semisimple group over an arbitrary curve. 
A more relevant case for us is when $G=\GL_n$, $X=\bP^1$, $S=\{0,\infty\}$, $\cG(\cO_0)=P^\opp$ and $\cG(\cO_\infty)=P$, where $P$ is a parahoric and $P^\opp$ is its opposite. Then, one can show that every $\cG$-bundle on $\bP^1-\infty$ is trivial; thus, 
\[
\Bun_{\cG}(k) = P^- \backslash G(F_\infty) /P,
\] 
where $P^-=P^\opp \cap G(k[s,s^{-1}])$, cf. \cite[Proposition 1.1]{HNY} or \cite[\S 2.12]{YunEpipelagic}.

\subsection{Connected components of $\Bun_{\cG}$} 

\subsubsection{Kottwitz homomorphism} \label{s:Kottwitz} 
Let $\kappa: \Bun_{\cG}\ra X^*(Z(\hG))\simeq \bZ$ denote the Kottwitz homomorphism, cf. \cite[\S 2]{YunCDM}. Using adelic uniformisation, 
we can identify $\kappa$ (on the level of $k$-points) with the composition 
\begin{equation}\label{eq:Kottwitz}
G(\bA_F)\rar{\det} \bA_F^\times \rar{\deg} \bZ.
\end{equation} 
If we view $\cG$-bundles as vector bundles equipped with additional data, then  $\kappa$ coincides with taking the degree of the underlying vector bundles.

\subsubsection{Connected components}\label{ss:connected components}
The map $\kappa$ factors through $\Bun_\cG\ra\Bun_n$, which, as noted above, is a fibration with connected fibres. Thus, $\kappa$ induces an isomorphism 
$\pi_0(\Bun_{\cG})\simeq\pi_0(\Bun_n)\simeq \bZ$. 
For each $\alpha \in \bZ$, we let $\Bun_{\cG}^\alpha$ denote the corresponding connected component and $\mathring{\Bun_{\cG}^\alpha}$ its generic locus. 

\begin{rem} In \S \ref{s:Omega}, we fixed an isomorphism $\Omega\simeq \bZ$ and a generator $\tw_1\in \Omega$ which maps to $1$ under this isomorphism. Thus, we can also use  $\Omega$ to parameterise connected components of $\Bun_\cG$. In this language, $\Bun_{\cG}^\alpha$ is the component corresponding to $\tw_1^\alpha$. 
\end{rem}

\subsubsection{Identification of components}\label{sss:Hk_alpha}
The integral models of interest to us satisfy the property that for some $x\in X$, we have $\cG(\cO_x)=I(j)$, where the latter is the $j$th Moy--Prasad subgroup of the standard Iwahori. This allows us to identify the components of $\Bun_\cG$ (cf. \cite[Corollary 1.2]{HNY}). Indeed, $\Omega=N_{\tW}(I)$ acts on $I(j)$; thus, it also acts on $\Bun_{\cG}$ by changing the level structure at $x$. With respect to the Kottwitz homomorphism, this action satisfies 
\[
\kappa(\tw_1\cdot\cE)=\kappa(\cE)+1,
\]
It follows that for each $\alpha\in\bZ$, the map $\cE\mapsto \tw_1^\alpha\cdot \cE$ defines an isomorphism 
\begin{equation}\label{eq:Hk}
\Hk_\alpha: \Bun_{\cG}^0\simeq \Bun_\cG^\alpha.
\end{equation}  
These isomorphisms will play an important role in the construction of hypergeometric Hecke eigensheaves.

\subsubsection{Integral model for the centre} \label{s:centre} 
The integral model $\cG$ defines an integral model $\cZ$ for the centre $Z=Z(G)\simeq \bGm$; namely,  
\[
\cZ(\cO_x) := \cG(\cO_x)\cap Z(F_x).
\]
In the examples of interest to us, $\cZ$ is unramified everywhere except possibly at one point $x\in X$; moreover, at this point, $\cZ(\cO_x)$ is either $\cO_x^\times$ or $1+\cP_x$. 
In both cases, the identification 
\[
\Bun_\cZ^0(\overline{k})=\overline{k}^\times\backslash (\cO_x\otimes_k \overline{k})^\times/\cZ(\cO_x\otimes_k \overline{k})
\]
implies that (the set of isomorphism classes of) $\Bun_\cZ^0(\overline{k})$ is a point. Thus, the coarse moduli space of $\Bun_\cZ^0$ is also a point. When constructing hypergeometric Hecke eigensheaf, this fact allows us to bypass some of the technical aspects of \cite{YunCDM}.

\section{Ramified Hecke eigensheaves}\label{s:eigensheaves} 
In this section, we explain what one means by a Hecke eigensheaf on $\Bun_\cG$, where $\cG$ is an integral model for $G$ over $X$, cf.  \cite{HNY, YunCDM}. We continue to assume that $G=\GL_n$ and $X=\bP^1$, though except for \S \ref{ss:Eigenvalue}, the discussions apply verbatim to split reductive groups over smooth projective curves.

\subsection{Geometric Hecke operators}

\subsubsection{Hecke stack} 
The stack of Hecke modifications is defined as 
\[
\Hecke=\mathrm{Hecke}_\cG:=\{(\cE_1,\cE_2,x,\phi)\mid \cE_1,\, \cE_2\in\Bun_{\cG}, x\in (X-S), \,  \phi:\cE_1|_{X-x}\simeq \cE_2|_{X-x} \}.
\]
We have forgetful functors $\pr_1$ and $\pr_2$ mapping $(\cE_1,\cE_2,x,\phi)$ to $\cE_1$ and $(\cE_2,x)$, respectively. Thus, we obtain the \emph{Hecke correspondence} 
\begin{equation}\label{eq:Hecke correspondence}
\begin{tikzcd}
&  \mathrm{Hecke} \arrow{dl}[swap]{\pr_1} \arrow{dr}{\pr_2}  & \\
\Bun_{\cG} &       &  \Bun_{\cG}\times(X-S)
\end{tikzcd}
\end{equation}

The morphism $\pr_2$ is a locally trivial fibration whose fibres are isomorphic to the affine Grassmannian $\Gr=\on{Gr}_G$. The morphism $\pr_1$ is a locally trivial fibration whose fibres are isomorphic to the Beilinson--Drinfeld Grassmanian $\on{GR}=\GR_G$.

\subsubsection{} 
Let $\hG$ denote the Langlands dual group. The geometric Satake isomorphism \cite{Ginzburg,  BD, MirkovicVilonen}
associates to every representation $V$ of $\hG$ a perverse sheaf $\IC_V$ on $\Gr$, and therefore also on $\GR$ and $\Hecke$. For each $V\in \Rep(\hG)$, we let $\Gr_V\subseteq \Gr$, $\GR_V\subseteq \GR$, and $\Hecke_V\subseteq \Hecke$ denote the support of these perverse sheaves.

\subsubsection{}
The geometric Hecke operators are defined by 
\begin{align*} 
\Hk:  \on{Rep}(\hG) \times D^b(\Bun_\cG) &  \ra  D^b(\Bun_\cG\times (X-S))\\
(V, \cE) & \mapsto   \Hk_V(\cE):=\pr_{2,!} (\pr_1^* \cE \otimes \on{IC}_V). 
\end{align*} 
Alternatively, we can first restrict the above correspondence to $\Hecke_V$ 
and then define $\Hk_V$ by the same formula (where now $\pr_i$ are morphisms from $\Hecke_V$).

\subsection{Eigensheaves}
 
\subsubsection{Definition of Hecke eigensheaves}
A non-zero perverse sheaf $\sA$ on $\Bun_\cG$ is called an \emph{eigensheaf} if there exists a $\hG$-local system $E$ on $X-S$, viewed as a tensor functor
\[
\Rep(\hG) \ra \on{LocSys}(X-S), \qquad V\mapsto E_V,
\]
and coherent isomorphisms  $\Hk_V(\sA) \simeq \sA \boxtimes E_V$. 
Here, ``coherent'' means compatible with the tensor structure of $\Rep(\hG)$ and the composition of Hecke operators, cf. \cite[\S 2]{GaitsgorydeJong} for details. The local system $E$ is known as the \emph{Hecke eigenvalue} of $\sA$.

\subsubsection{Reformulation of the core conjecture} 
In the present framework, Conjecture \ref{c:main} can be phrased as follows: for every irreducible $\hG$-local system $E$ on $X-S$, there exists an integral model $\cG$ for $G$ on $X$, with ramification points $S$, and a Hecke eigensheaf $\sA$ on $\Bun_\cG$ whose eigenvalue is $E$. In the cases of interest to us (i.e. the rigid situation), the perverse sheaf $\sA$ satisfies a cleanness property. We now recall what one means by a clean perverse sheaf.

\subsubsection{Cleanness}  
Let $Y$ be an Artin stack of dimension $d$ and $j: U\hookrightarrow Y$ the inclusion of a non-empty open smooth substack. 
Let $\cL$ be a local system on $U$ and $\cP:=\cL[d]\in \on{Perv}(U)$. Then, we have a canonical morphism 
\begin{equation} \label{eq:clean1} 
j_! \cP \ra j_* \cP
\end{equation} 
in $D^b(Y)$, which induces a morphism ${}^p H^0(j_! \cP) \ra {}^p H^0 (j_* \cP)$ in $\mathrm{Perv}(Y)$ whose image is denoted by $j_{!*} \cP$. One says that $j_{!*}\cP$ is \emph{clean} if the morphism \eqref{eq:clean1} is an isomorphism. In this case, we obtain isomorphisms $j_! \cP\simeq j_{!*}\cP \simeq j_*\cP$. In particular, the stalks and costalks of $j_{!*} \cP$ on $Y-U$ vanish.

\subsection{Hecke eigenvalue}\label{ss:Eigenvalue} Let $\sA$ be a Hecke eigensheaf on $\Bun_\cG$ with Hecke eigenvalue $E$. 
For every $\alpha\in \bZ=\pi_0(\Bun_\cG)$, let $j_\alpha:\mathring{\Bun_\cG^\alpha}\hookrightarrow\Bun_\cG^\alpha$ denote the open inclusion of the generic locus. Let $\sA_\alpha$ denote the restriction of $\sA$ to the component $\Bun_{\cG}^\alpha$. In this subsection, we give a convenient expression for $E$ under the assumption that $\sA_\alpha$ is a clean extension of a rank one local system $\cL_\alpha$ on the generic locus, i.e. we assume $\sA_\alpha\simeq j_{\alpha,!}\cL_\alpha[\dim(\Bun_\cG^\alpha)]\simeq j_{\alpha,*}\cL_\alpha[\dim(\Bun_\cG^\alpha)]$. For convenience, we also assume all the components $\Bun_\cG^\alpha$ have the same dimension. As we shall see, these assumptions are satisfied in the hypergeometric (and, more generally, rigid) case.

\subsubsection{Restricting to the fundamental coweight}
As we are dealing with $\hG=\GL_n$, to specify $E$,  it is sufficient to consider the standard representation and describe $E_{\on{Std}}$. Let $\omega_1=(1,0,...,0)$ be the first fundamental coweight. 
Restricting the Hecke correspondence to the substack $\Hecke_{\omega_1}$, we obtain the diagram 
\[
\begin{tikzcd}
&  \text{Hecke}_{\omega_1}\arrow{dl}[swap]{\text{pr}_1}  \arrow{dr}{\pr_2}  & \\
\Bun_\cG^{0} &       &  \Bun_\cG^{1}\times(X-S).
\end{tikzcd}
\]
The morphism $\mathrm{pr}_2$ is a locally trivial fibration whose fibres are isomorphic to 
\[
 \mathrm{Gr}_{\omega_1} = G/P_{\omega_1} \simeq \bP^{n-1}, 
\] 
where $P_{\omega_1}$ is the maximal parabolic associated to $\omega_1$. 
Thus, up to a Tate twist,  $\IC_{\omega_1}=\bQl[n-1]$ and the Hecke eigen property gives us an isomorphism
\[
\sA_1\boxtimes E_\Std\simeq \pr_{2,!}(\pr_1^* \sA_0\otimes \IC_{\omega_1})=\pr_{2,!}\pr_1^* \sA_0[n-1].
\]

\subsubsection{Restricting to the generic locus of $\Bun_\cG^1$} 
Let $\star$ be a point in the generic locus $\mathring{\Bun_\cG^1}$. Restricting the above isomorphism to $\star\times (X-S)$, we obtain an isomorphism 
\[ 
E_\Std[\dim(\Bun_\cG)] \simeq (\pr_{2,!} \pr_1^* \sA_0[n-1])|_{\star\times (X-S)}.
\] 
Alternatively, we can express $E_\Std$ as follows. 
If we restrict the  above correspondence to $\star \times (X-S)$, we obtain the diagram 
\[
\begin{tikzcd}
&  \text{GR}_{\omega_1} \arrow{dl}[swap]{p_1}  \arrow{dr}{p_2}  & \\
\Bun_\cG^{0} &       &  X-S.
\end{tikzcd} 
\]
where $p_1,p_2$ are the restrictions of $\pr_1,\pr_2$. Applying proper base change, we get 
\[
E_{\Std} \simeq p_{2,!} p_1^* \sA_0[n-1-\dim(\Bun_\cG)].
\]

\subsubsection{Restricting to the generic locus of $\Bun_\cG^0$} 
Recall that the cleanness assumption means that $\sA_0$ is the extension by zero from a local system $\cL_0$ on the generic locus $\mathring{\Bun_{\cG}^0}$, shifted by degree $\dim(\Bun_\cG)$. 
Now restricting the above correspondence to $\mathring{\Bun_{\cG}^0}$, we obtain 
\begin{equation}\label{eq:relevantHecke}
\begin{tikzcd}
&  \mathring{\mathrm{GR}}_{\omega_1} \arrow{dl}[swap]{\pi_1}  \arrow{dr}{\pi_2}  & \\
\mathring{\Bun_{\cG}^0} &       &  X-S.
\end{tikzcd} 
\end{equation}
where $\pi_1,\pi_2$ are the restrictions of $p_1,p_2$. Applying proper base change again, we conclude
\begin{equation}\label{eq:eigenvalue}
 E_{\on{Std}}\simeq \pi_{2!} \pi_1^* \cL_0[n-1].
\end{equation}

\subsubsection{} 
When $\sA$ is a Hecke eigensheaf arising from a rigid hypergeometric automorphic data, 
we give a realisation of the above correspondence using an explicit description of $\Bun_\cG$ in terms of lattices. This leads to an explicit formula for $E_\Std$, which in turn,  allows us to prove that it is isomorphic to a hypergeometric local system.

\section{Rigid automorphic data} \label{s:rigid} 
\subsection{Overview} 
\subsubsection{} 
The notion of rigid automorphic data is due to Zhiwei Yun  \cite{YunCDM}. In what follows, we discuss this notion in the presence of three simplifying assumptions; namely, we take $G=\GL_n$, $X=\bP^1$, and ignore central characters. The restriction on $G$ is not very serious; indeed, with a bit more notation, one can write the story for arbitrary split reductive $G$. The restriction on $X$ is also not serious for one knows that (under mild assumptions) rigid automorphic data exists only for curves of genus $0$ and $1$, cf. \cite[Lemma 2.7.12.(2)]{YunCDM}. This is the automorphic analogue of the fact that there are no interesting rigid local systems on curves of genus greater than $1$ \cite[\S 1]{KatzRigid}.

\subsubsection{} 
 The omission of central characters is, however, serious and allows us to bypass a large amount of technical details of \cite{YunCDM}. The version of automorphic data without central characters is sufficient for our purposes because we are primarily  interested in Hecke eigensheaves. To understand rigid automorphic representations, however, one needs the full theory of \emph{op. cit}. For instance, our ``stripped down'' notion of automorphic data prevents us from making a precise statement regarding hypergeometric automorphic representations, cf. \S \ref{sss:representations}. We believe, however, that this omission is worth the price, for otherwise, we would need to introduce a large amount of distracting notation and technical details. 

\subsubsection{} After giving our version of rigid automorphic data, we explain how they give rise to Hecke eigensheaves. 
The construction of eigensheaves given below is essentially that of \cite{YunCDM}, however, at one crucial step (roughly, going from weak to strong Hecke eigensheaves), we need to use an argument from \cite{HNY} in order to avoid one of the assumptions of \cite{YunCDM} (namely, Assumption 4.7.1) which is not satisfied for general hypergeometric automorphic data.

\subsection{Automorphic data} 

\begin{defe} \label{d:automorphicData}
An \emph{automorphic data} for $G$ on $X$ is a finite subset $S\subset X$ together with a pair
$(K_S,\gamma_S)$, where
\begin{itemize}
\item $K_S=\{K_x\}_{x\in S}$ is a collection of pro-algebraic groups $K_x$  with finite codimension in some parahoric subgroup in $G(F_x)$. By an abuse of notation, we also denote $K_S=\prod_{x\in S} K_x$.
\item $\gamma_S$ is a collection $\{\gamma_x\}_{x\in S}$ of rank one character sheaves
$\gamma_x$ on $K_x$, which is the pullback of a rank one character sheaf from some finite dimensional quotient $K_x\twoheadrightarrow K_x/K_x^+$. Here, $K_x^+\subseteq K_x$ is a pro-algebraic normal subgroup. We let 
\[
L_x:=K_x/K_x^+, \qquad L_S:=\prod_{x\in S} L_x,\qquad \textrm{and} \qquad   \gamma_S:=\boxtimes_{x\in S} \gamma_x.
\]  
\end{itemize}
\end{defe} 
For a review of rank one character sheaves, cf.  \cite{Masoud} or the appendix of \cite{YunCDM}.

\subsubsection{Integral models associated to automorphic data}\label{sss:group schemes}
To every automorphic data $(K_S, \gamma_S)$, one associates integral models  $\cG$ and $\cG'$ satisfying
\[
\begin{aligned}
\cG|_{X-S}&=G\times(X-S);\qquad \cG|_{\cO_x}=K_x^+,\ \forall x\in S;\\
\cG'|_{X-S}&=G\times(X-S);\qquad \cG'|_{\cO_x}=K_x,\ \forall x\in S. 
\end{aligned}
\]
In what follows, we use the properties of $\Bun_\cG$ and $\Bun_{\cG'}$ discussed in the previous section. 

\subsubsection{Stabilisers of bundles}
Let $\cF\in \Bun_{\cG}$ and let $\Stab_{L_S}(\cF)$ denote its stabiliser in the stacky sense; i.e., 
\[
\Stab_{L_S}(\cF):=\{(l, \eta) \, | \, l\in L_S, \, \, \eta\in \on{Isom}(\cF, l\cdot\cF)\}. 
\]
We have a canonical morphism
\[
\Stab_{L_S}(\cF)\ra L_S,\qquad (l,\eta)\mapsto l,
\]
which allows us to define the pullback of $\gamma_S$ to $\Stab_{L_S}(\cF)$.

\subsubsection{Relating stabilisers to $L_S$} 
There is a canonical forgetful map 
\[
p: \Bun_{\cG}\ra \Bun_{\cG'},
\] 
which is an $L_S$-torsor. 
If $\cE:=p(\cF)\in \Bun_{\cG'}$, then $ \Aut_{\cG'}(\cE)\simeq \Stab_{L_S}(\cF)$, and the resulting map $\Aut_{\cG'}(\cE)\ra L_S$ coincides with the composition 
\begin{equation} \label{eq:Res}
\on{Res}:\ \Aut_{\cG'}(\cE) \ra \prod_{x\in S} \Aut(\cE|_{\cO_x})\simeq K_S \ra L_S.
\end{equation}

\subsection{Rigidity} 
Suppose we have an automorphic data $(K_S,\gamma_S)$ with associated integral models $\cG'$ and $\cG$.

\subsubsection{Relevant $\cG$-bundles}  
A bundle $\cF\in \Bun_\cG(\overline{k} )$ is called \emph{relevant} if the pullback of $\gamma_S$ to $(\on{Stab}_{L_S}(\cF))^\circ$  is the constant sheaf; otherwise $\cF$ is irrelevant. Note that $\cF$  is relevant if and only if all the elements in its $L_S$-orbit are relevant; thus, we can talk about relevant \emph{orbits} on $\Bun_\cG$.

\subsubsection{Relevant $\cG'$-bundles} 
We also have a notion of relevant $\cG'$-bundles. Namely, we say that $\cE\in \Bun_{\cG'}(\overline{k} )$ is relevant if one (and therefore all) $\cG$-bundles in the fibre $p^{-1}(\cE)$ is relevant.  Thus, $\cE\in \Bun_{\cG'}$ is relevant if and only if the pullback of $\gamma_S$ to $(\Aut_{\cG'}(\cE))^\circ$ is constant. In particular, if $\Aut_{\cG'}(\cE)$ is trivial, then $\cE$ is automatically relevant.

\subsubsection{Definition of (strict) rigidity} 
Let $\cZ\subseteq \cG'$ be the integral model for the centre $Z\subseteq G$ associated to $\cG'$  (\S \ref{s:centre}); i.e., 
$\cZ(\cO_x) := \cG'(\cO_x)\cap Z(F_x)$. 

\begin{defe}\label{d:rigidity} \mbox{}
\begin{enumerate} 
\item An automorphic data is called \emph{rigid} if for all $\alpha \in \bZ=\pi_0(\Bun_{\cG'})$, there exist a unique relevant element $\cE_\alpha$  on the component $\Bun_{\cG'}^\alpha$ (equivalently, there is a unique relevant orbit $\cO_\alpha$ on $\Bun_\cG^\alpha$).  
\item 
An automorphic data  is called \emph{strictly rigid} if it is rigid and the following properties hold: 
\begin{enumerate} 
\item the stabilisers of relevant elements are trivial, i.e., $\Aut_{\cG'}(\cE_\alpha)=\{1\}$ for all $\alpha \in \bZ$;  
\item  the coarse moduli space of $\Bun_{\cZ}^0$ is a point.
\end{enumerate} 
\end{enumerate} 
\end{defe}

\subsubsection{How to establish rigidity?} 
In the cases of interest to us, we prove rigidity by first showing that the generic locus of $\Bun_{\cG'}^\alpha$ consists of a unique bundle and this bundle has trivial automorphisms. This bundle is, therefore, automatically relevant. We then show that all other bundles in $\Bun_{\cG'}^\alpha$ are irrelevant, by exhibiting a one-dimensional subscheme in their stabiliser such that the restriction of $\gamma_S$ to this subscheme is non-constant.

\subsubsection{Numerical requirement for rigidity}\label{s:numerical}
Suppose we have a strictly rigid automorphic data. Then $\{\cE_\alpha\}$ is an open substack of  $\Bun_{\cG'}^\alpha$ with trivial stabiliser. It follows that $\dim(\Bun_{\cG'})=0$. In view of \eqref{eq:dimBunG}, we obtain: 
\begin{equation} \label{eq:numerical}
\sum_{x\in S} \dim(G(\cO_x)/\cG'(\cO_x))=\dim(G).\footnote{For groups other than $\GL_n$, one encounters parahorics that are not in $G(\cO_x)$. In this case, one should replace the dimension by relative dimension.}
\end{equation}  
This numerical requirement should be compared with the numerical criteria for (cohomological) rigidity of local systems, cf. \cite[Proposition 3.2.7]{YunCDM} in positive characteristic and \cite[Proposition 11]{FG} or \cite[\S 4.2]{KamgarpourSage} in characteristic zero.

\subsection{Eigensheaves from rigid automorphic data} \label{ss:eigensheaves} 
Let $(K_S, \gamma_S)$ be a strictly rigid automorphic data with integral models $\cG'$ and $\cG$. Let us assume further that the level structure for some $s\in S$ is $I(j)$, where the latter is the $j$th Moy--Prasad subgroup of the Iwahori $I$. We now explain how to construct a Hecke eigensheaf on $\Bun_\cG$ from this data.

\subsubsection{Transporting $\gamma_S$ to $\cO_0$}
Let $\cO_0$ be the unique relevant orbit on $\Bun_\cG^0$ and let 
\[
j_0: \cO_0 \hookrightarrow \Bun_\cG^0 
\]
denote the corresponding open inclusion. Note that $\cO_0$ is a torsor for $L_S$. We choose, once and for all, a trivialisation of this torsor, i.e., an isomorphism $\iota: L_S\simeq \cO_0$.  The character sheaf $\gamma_S$ then defines a local system $\gamma_0$ on $\cO_0$.

\subsubsection{Extending $\gamma_0$ to a perverse sheaf $\sA_0$} 
It follows readily from the uniqueness of relevant orbits that the local system $\gamma_0$ extends to a clean perverse sheaf on $\Bun_\cG^0$. Moreover,  uniqueness of relevant orbits also implies that, up to local systems coming from the base field $k$, $\sA_0$ is the unique  $(L_S, \gamma_S)$-equivariant irreducible perverse sheaf on $\Bun_\cG^0$, cf. \cite{YunCDM}, Lemma 4.4.4.

\subsubsection{Transporting $\sA_0$ to other components} 
 Recall we assumed that one of the level structure of $\cG$ is $I(j)$. As discussed in \S \ref{sss:Hk_alpha}, this assumption implies that for every $\alpha \in \bZ=\pi_0(\Bun_\cG)$, we have an isomorphism 
\[
\Hk_\alpha: \Bun_\cG^0\simeq \Bun_\cG^\alpha,\qquad \cE \mapsto \tw_1^\alpha\cdot \cE.
\]
Under this isomorphism, $\cO_0$ maps to the unique relevant orbit $\cO_\alpha \subset \Bun_\cG^\alpha$. Let $\sA_\alpha:=(\Hk_\alpha^{-1})^*\sA_0$. 
Then $\sA_\alpha$ is a perverse sheaf on $\Bun_\cG^\alpha$ which is a clean extension from a local system on $\cO_\alpha$.

\subsubsection{Correct equivariance property} 
There is a subtlety involved in specifying the equivariance property of $\sA_\alpha$ as it is \emph{not}, in general, $(L_S,\gamma_S)$-equivariant, cf. \cite[\S 4.1.11]{XuZhu}. 
Instead, let 
$\gamma_S^\alpha:=\tw_1^\alpha \cdot \gamma_S$
be the twist of $\gamma_S$ by $\tw_1^\alpha$. Then $\gamma_S^\alpha$ is also a character sheaf on $L_S$ and 
$\sA_\alpha$ is $(L_S,\gamma_S^\alpha)$-equivariant. As above, up to local systems coming from $k$, $\sA_\alpha$ is the unique $(L_S, \gamma_S^\alpha)$-equivariant irreducible perverse sheaf on $\Bun_\cG^\alpha$.

\subsubsection{A key theorem of Yun} 
Let $\sA$ denote the perverse sheaf on $\Bun_\cG$ whose restriction to $\Bun_\cG^\alpha$ equals $\sA_\alpha$. 

\begin{thm}[Yun]
The perverse sheaf $\sA$ is a Hecke eigensheaf. \label{t:Yun}
\end{thm}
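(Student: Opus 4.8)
The plan is to follow the strategy of Yun \cite{YunCDM}, using the fact that our situation is strictly rigid (with the integral model having level structure $I(j)$ at one point) to verify the hypotheses of \emph{op. cit.} and, at the one place where they fail for general hypergeometric data, to substitute the Hecke-functoriality argument of \cite{HNY}. First I would recall Yun's notion of a \emph{weak Hecke eigensheaf}: a perverse sheaf whose restriction to each connected component $\Bun_\cG^\alpha$ is a Hecke eigensheaf \emph{relative to that component}, in the sense that $\Hk_V$ applied to $\sA_\alpha$ is, up to shift and twist, $\sA_{\alpha'}\boxtimes E_V$ for the appropriate target component $\alpha'$, but without asserting that the local systems $E_V$ glue across components into a single $\LG$-local system. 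The key input is that, by the uniqueness of relevant orbits (Definition \ref{d:rigidity}) together with the cleanness discussed in \S\ref{ss:eigensheaves}, the sheaf $\sA$ is, up to base-field twists, the \emph{unique} $(L_S,\gamma_S^\alpha)$-equivariant irreducible perverse sheaf on each $\Bun_\cG^\alpha$; this rigidity forces $\Hk_V(\sA)$ to be, component by component, a twist of $\sA$ by a local system on $X-S$, since the Hecke operators preserve the relevant equivariance structure (the geometric Satake sheaves $\IC_V$ are built from the $G(\cO_x)$-equivariant structure, which commutes with the level structures away from the Hecke point).

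The second step is the upgrade from weak to genuine Hecke eigensheaf. The obstacle is that the local systems $E_V^\alpha$ produced on each component a priori depend on $\alpha$, and coherence of the tensor structure across components is exactly Assumption 4.7.1 of \cite{YunCDM}, which need not hold for general hypergeometric automorphic data. To circumvent this I would invoke the isomorphisms $\Hk_\alpha:\Bun_\cG^0\xrightarrow{\sim}\Bun_\cG^\alpha$ of \eqref{eq:Hk}, which exist precisely because one of the level structures is a Moy--Prasad subgroup $I(j)$ of the Iwahori, so that $\Omega\simeq\bZ$ acts by changing that level structure and permuting components. Following the argument of \cite{HNY} (the passage from their Theorem to the genuine eigensheaf property), one checks that these isomorphisms are compatible with the Hecke correspondences: the action of $\tw_1\in\Omega$ on $\Bun_\cG$ lifts to an action on $\Hecke_\cG$ commuting (up to the natural shift in $\alpha$) with $\pr_1$, $\pr_2$, and the Satake sheaves. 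Transporting $\sA_\alpha=(\Hk_\alpha^{-1})^*\sA_0$, this forces $E_V^\alpha\simeq E_V^0$ for all $\alpha$, so all components see the same eigenvalue $E_V:=E_V^0$.

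The final step is to assemble $V\mapsto E_V$ into an honest $\LG$-local system, i.e.\ a tensor functor $\Rep(\LG)\to\mathrm{LocSys}(X-S)$. Here one uses that $\IC$ is a tensor functor under fusion/convolution on $\Gr$ (geometric Satake), that Hecke operators compose according to this tensor structure, and that the identifications above were chosen compatibly; the coherence isomorphisms $\Hk_{V_1}\circ\Hk_{V_2}\simeq\Hk_{V_1\otimes V_2}$ then descend to coherence isomorphisms $E_{V_1}\otimes E_{V_2}\simeq E_{V_1\otimes V_2}$. Because we have suppressed central characters (as flagged in \S\ref{s:rigid}), there is a minor point that the resulting functor lands in $\LG$-local systems rather than something with extra rigidification, but this is exactly the setting of Conjecture \ref{c:main} and requires no further work. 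I expect the main obstacle to be the second step: carefully setting up the $\Omega$-equivariance of the Hecke correspondence and checking it is compatible with the Satake sheaves and with the chosen trivialisation $\iota:L_S\simeq\cO_0$ of the relevant orbit, so that the transported eigenvalues genuinely agree rather than differing by an $\alpha$-dependent twist by a base-field local system.
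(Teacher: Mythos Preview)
Your proposal is correct and follows essentially the same route as the paper: invoke Yun's Lemma 4.4.4 and Theorem 4.4.2 (via strict rigidity, which gives Assumptions 4.4.1) to obtain $\Hk_V(\sA)|_{\Bun_\cG^\alpha\times(X-S)}\simeq\sA_\alpha\boxtimes E_V^\alpha$ with $E_V^\alpha$ a semisimple local system, then use the $\Omega$-action to show $E_V^\alpha$ is independent of $\alpha$.

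One remark on your ``main obstacle'': the paper dispatches it with a cleaner observation than the $\Omega$-equivariance check you outline. Rather than lifting the $\Omega$-action to $\Hecke_\cG$ and verifying compatibility with Satake sheaves, the paper notes that $\Hk_\alpha$ is itself a Hecke operator (supported at a point of $S$) while $\Hk_V$ is supported on $X-S$; Hecke operators at distinct points commute. The independence of $\alpha$ then follows from the one-line chain
\[
\sA_0\boxtimes E_V^0=\Hk_V(\sA_{-\alpha})=\Hk_V(\Hk_{-\alpha}(\sA_0))=\Hk_{-\alpha}(\Hk_V(\sA_0))=\Hk_{-\alpha}(\sA_\alpha\boxtimes E_V^\alpha)=\sA_0\boxtimes E_V^\alpha,
\]
with no risk of an $\alpha$-dependent base-field twist creeping in, since the definition $\sA_\alpha:=(\Hk_\alpha^{-1})^*\sA_0$ was made precisely so that $\Hk_\alpha$ intertwines $\sA_0$ and $\sA_\alpha$ on the nose.
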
 

\begin{proof} We explain how this theorem follows from considerations in \cite{YunCDM} together with an argument of \cite{HNY}.  
The Hecke correspondence \eqref{eq:Hecke correspondence} is equivariant with respect to the action of $L_S$. As $\sA_\alpha$ is $(L_S,\gamma_S^\alpha)$-equivariant, it follows that $\Hk_V(\sA)|_{\Bun_\cG^\alpha\times(X-S)}$ is also $(L_S,\gamma_S^\alpha)$-equivariant. Lemma 4.4.4.(2) of \cite{YunCDM} then implies that  
\[
\Hk_V(\sA)|_{\Bun_\cG^\alpha\times(X-S)}\simeq\sA_\alpha\boxtimes E_V^\alpha,
\]
for some  $\ell$-adic complex  $E_V^\alpha$ on $X-S$. The fact that we have a strictly rigid automorphic data implies that Assumptions 4.4.1 of \cite{YunCDM} are satisfied. Thus, we are in a position to apply Theorem 4.4.2 of \emph{op. cit.} to conclude that $E_V^\alpha$ is a semisimple local system. 

It remains to show that $E_V^\alpha$ is independent of $\alpha$. Here, we use the argument of \cite[\S4.2]{HNY}. Namely, we may assume that $\mathrm{IC}_V$ is supported on $\Bun_\cG^\alpha$ for some $\alpha$. Note that $\Hk_\alpha$ commutes with $\Hk_V$ because they are Hecke operators supported at different points of $X=\bP^1$ (the former is supported at a point in $S$, the latter is supported on $X-S$). We therefore obtain
\[
\sA_0\boxtimes E_V^0=\Hk_V(\sA_{-\alpha})=\Hk_V(\Hk_{-\alpha}(\sA_0))
=\Hk_{-\alpha}(\Hk_V(\sA_0))=\Hk_{-\alpha}(\sA_\alpha\boxtimes E_V^\alpha)
=\sA_0\boxtimes E_V^\alpha.
\]
It follows that  $E_V^0=E_V^\alpha$ for all $\alpha$. Let us denote this local system on $X-S$ by $E_V$. Then the assignment $V\mapsto E_V$ defines a tensor functor $\mathrm{Rep}(\GL_n)\rightarrow\mathrm{LocSys}(X-S)$, i.e. a $\GL_n$-local system. This is the Hecke eigenvalue of $\sA$; thus, $\sA$ is a Hecke eigensheaf, as required. 
\end{proof}

\subsubsection{Comparison to Yun's construction}
The construction of the Hecke eigensheaf in \cite{YunCDM} follows a slightly different path. Namely, one first chooses (non-canonical) isomorphisms $L_S\simeq \cO_\alpha$ to transport the local system $\gamma_S$ to each $\cO_\alpha$. The collection of the corresponding perverse extensions defines a perverse sheaf on $\Bun_\cG$, which Yun proves is a  ``weak Hecke eigensheaf" (this is Theorem 4.4.2 \cite{YunCDM} invoked above). Roughly speaking, this means that the resulting eigenvalue is a local system on each component. Yun then proves that under additional assumptions (namely Assumptions 4.7.1 of \emph{op. cit.}),  the weak Hecke eigensheaf is actually a genuine Hecke eigensheaf. In the hypergeometric setting, however, these assumptions are satisfied only when the characters at $0$ are trivial, i.e. $\chi_i=1$ for all $i$. Thus, to treat general hypergeometric sheaves, we need the above variant of Yun's theorem.

\section{Main definitions and theorems} \label{s:main} 
In this section, we associate to every hypergeometric initial data $(\psi, \chi_1,...,\chi_n, \rho_1,...,\rho_m)$ (\S \ref{sss:initialData}), an automorphic data (Definition \ref{d:automorphicData}), and state our main results. As hypergeometric sheaves have  different ramification profiles in the tame and wild setting, we need to treat these cases separately.

\subsection{Tame hypergeometric automorphic data}\label{ss:tame data} 
Recall that $k$ is a finite field. Let $n$ be a positive integer, and $\chi_1,...,\chi_n$ and $\rho_1,...,\rho_n$ multiplicative characters $k^\times \ra \bQlt$. 

\begin{defe} \label{d:tamedata}
The \emph{tame hypergeometric automorphic data} is defined by $S:=\{0,1,\infty\}\subset \bP^1$ and 
\[
(K_x, \gamma_x):=
\begin{cases} 
(I^\opp, \cL_{\overline{\chi}}) & x=0; \\ 
(\mathtt{Q}, \bQl) & x=1;\\
(I, \cL_\rho) &  x=\infty.
\end{cases} 
\]
\end{defe} 

Here, $I\subset G(\cO_\infty)$ is the standard Iwahori subgroup of $G$, and $I(1)$ is its first Moy--Prasad subgroup. The character  $\rho=(\rho_1,...,\rho_n): I\ra \bQlt$ is defined via the composition
\[
I\ra I/I(1) \simeq T\simeq (k^\times)^n\ra \bQlt. 
\]
Let $\cL_\rho$ denote the character sheaf on $I$ whose Frobenius trace is $\rho$. The group $I^\opp \subset G(\cO_0)$ is the Iwahori opposite to $I$. As above, $\chi=(\chi_1,...,\chi_n)$ defines a character of $I^\opp$. We let $\cL_{\overline{\chi}}$ denote the character sheaf on $I^\opp$ whose trace function is $\ochi=\chi^{-1}$. The group $\mathtt{Q}$ was defined in \S\ref{sss:tameIntro} as the preimage of the mirabolic  of $G$. Note that $Z\mathtt{Q}$ is a parahoric subgroup of $G$.\footnote{The barycentre $x$ of the facet corresponding to $Z\mathtt{Q}$ is determined by $\alpha_0(x)=\alpha_{n-1}(x)=1/2$ and $\alpha_1(x)=\cdots=\alpha_{n-2}(x)=0$.} It is technically convenient for us to remove the centre because we want our automorphic data to be \emph{strictly} rigid.

\subsubsection{Integral models} 
To the above automorphic data, one associates the integral models $\cG$ and $\cG'$. The former group scheme was defined \eqref{eq:tameGroup}. The latter is defined by 
\[
\cG'(\cO_x) =
\begin{cases}
I^\opp & x=0; \\ 
\mathtt{Q} & x=1; \\ 
I & x=\infty; \\
G(\cO_x) & \textrm{otherwise}. 
\end{cases} 
\]
One readily verifies $\sum_{x\in S} \dim(G(\cO_x)/\cG'(\cO_x))=\dim(G)$; thus, the numerical requirement for strict rigidity \eqref{eq:numerical} is satisfied.

\subsubsection{Why the mirabolic shows up?}\label{s:motivationTame} 
The following discussion is informal and not used elsewhere in the text.  Let $\sH$ be a tame hypergeometric sheaf. Let $F_1$ denote the local field at $x=1$ and 
\[
\rho: \Gal(\overline{F_1}/F_1)\ra \GL_n(\bQl),
\]
the Galois representation defined by the restriction of $\sH$ to $x=1$. Recall  that $\rho$ is tamely ramified with pseudo-reflection monodromy. This implies that $\rho$ is a direct sum of $n$-characters, all but one of which are unramified. Under the local Langlands bijection \cite{LRS}, $\rho$ is mapped to the irreducible quotient $\pi_1$ of the principal series representation associated to a character $\alpha=(\alpha_1,...,\alpha_n)$ of $T(F)=(F^\times)^n$, where $\alpha_i$ is unramified for all $i\neq n$. Using properties of smooth representations of $G(F_1)$ discussed in \cite{KS2}, one can show that $\pi_1$ has a vector fixed under $Z\mathtt{Q}$. Moreover, if the monodromy of $\sH$ at $1$ is nontrivial, then $Z\mathtt{Q}$ is the largest compact open subgroup of $G(F_1)$ with a fixed vector in $\pi_1$. Thus, it is natural to take $\mathtt{Q}$ as the level structure at $1$. The fact that the resulting automorphic data satisfies the numerical requirement for rigidity is further evidence that this is the correct choice. (Of course, the ultimate vindication is that this automorphic data is rigid and its Hecke eigenvalue is $\sH$.)

\subsection{Wild hypergeometric automorphic data} \label{ss:wild data}
Let $m$ and $n$ be non-negative integers satisfying  $0\leq m<n$. 
Let $\chi_1,...,\chi_n$ and $\rho_1,...,\rho_m$ be multiplicative characters $k^\times \ra \bQlt$. 
\begin{defe}\label{d:wilddata}
The \emph{wild hypergeometric automorphic data} is $S:=\{0,\infty\}\subset \bP^1$ and 
\[
(K_x,\gamma_x):=
\begin{cases}
(I^\opp,\cL_{\overline{\chi}}) \quad x=0;\\
(J,\cL_\mu) \quad\quad\ x=\infty.
\end{cases}
\] 
\end{defe}
The pair $(I^\opp,\cL_{\overline{\chi}})$ is the same as in the tame case. The definition of $(J, \cL_\mu)$, given below, is more subtle and involves salient features of principal gradings of $\fg$. This definition is motivated by the fact that on the punctured neighbourhood of $\infty$, we have a decomposition 
\[
\sH_\infty \simeq (\on{Kl}_d)_\infty \oplus T_{m},
\]
where $\on{Kl}_d$ denotes a wild $d:=(n-m)$-dimensional (generalised) Kloosterman sheaf and $T_m$ is a tame rank $m$ local system with monodromy given by the $\rho_j$'s (see \S \ref{s:hypergeometric} for details). 
To construct $(J,\cL_\mu)$, we first construct the automorphic data corresponding to the wild part and then add the tame data.

\subsubsection{Principal gradings} 
The torsion element $\check{\rho}_G/d\in \cA_\bQ$ defines a  principal  grading
\begin{equation} 
\fg=\bigoplus_{i\in \bZ/d\bZ} \fg_i.
\end{equation}  
Let $G_0$ denote the connected subgroup of $G$ with Lie algebra $\fg_0$. Then $G_0$ acts on $\fg_1$ by conjugation. This is the Vinberg representation. We refer the reader to Appendix \ref{s:grading} for recollections on principal gradings and the corresponding Vinberg representations.

\subsubsection{The functional}
The definition of the automorphic data at $\infty$ depends on the choice of an appropriate functional on $\fg_1$. Characterising all functionals which give rise to the correct rigid automorphic data is subtle (see below). For our purposes, it will be sufficient to work with  
\begin{equation} \label{eq:phisp}
\phi=\phi_\spp:=
\begin{cases} 
E_{11}^* & \textrm{$d=1$}; \\
E_{12}^* + E_{23}^* + ...+ E_{d-1, d}^* + E_{d,1}^* & \textrm{$d\in \{2,...,n\}$}. 
\end{cases} 
\end{equation} 
If $d=n$, we obtain an affine generic functional in the sense of \cite[\S 1.3]{HNY}. In general, the $G_0$-orbit of $\phi$ is closed. In fact, if $d>n/2$, then $\phi$ is the unique, up to $G_0$-conjugacy and scalar multiplication, element of $\fg_1^*$ with a closed orbit (this fails for $d\leq n/2$). 
Note, however, that if $d<n$, then $\phi$ is \emph{not} stable because the stabiliser of $\phi$ is not finite. This is a key difference between our work and much of the related literature \cite{HNY, RY, YunEpipelagic, Chen}. In fact, to define the automorphic data at $\infty$, we need to bring the stabiliser of $\phi$ into play.

\subsubsection{Parahoric and Moy--Prasad subgroups} The functional $\phi$ takes care of the wild part of the data at $\infty$. To add the tame data (i.e., the characters $\rho_j$), we need to extend $\phi$ in an appropriate manner. Let $P\subset G(F_\infty)$ denote the parahoric subgroup associated to $\check{\rho}/d$, and let  $P\supset P(1)\supset P(2)\supset \cdots $ denote its Moy--Prasad filtration. Let $\fp\supset \fp(1)\supset \fp(2)\supset \cdots$ denote the corresponding Lie algebras. Let 
\begin{equation} 
L:=P/P(1)\qquad \textrm{and} \qquad V:=P(1)/P(2).
\end{equation} 
We have canonical isomorphisms 
$L\simeq G_0$ and $V\simeq \fp(1)/\fp(2)\simeq \fg_1$. 
Thus, we can view $\phi$ as a functional on $V$ or as a homomorphism $P(1)/P(2)\ra \bGa$.

\subsubsection{The subgroup $J$} 
To define $J$, we need to study the stabiliser of $\phi\in V^*$ under the action of $L$ on $V^*$. As in the tame case, it will be convenient to remove the centre. For $\phi$ specified in \eqref{eq:phisp}, we consider subgroups $L_\phi$ and $B_\phi$ of the stabiliser of $\phi$ in $L$ whose  image under $L\simeq G_0$ are as follows:
\begin{equation}\label{eq:Borel of Stab}
L_{\phi}\simeq\begin{bmatrix} 
\on{Id}_d & 0 \\
0 & \GL_{m}
\end{bmatrix}\cap G_0,\qquad \textrm{and} \qquad 
B_{\phi}\simeq\begin{bmatrix} 
\on{Id}_d & 0 \\
0 &B_m
\end{bmatrix}\cap G_0.
\end{equation} 
Here $B_m\subset \GL_m$ is the subgroup of upper triangular matrices. Thus, $\Stab_L(\phi)= ZL_\phi$, and $B_\phi$ is a Borel subgroup of $L_\phi$. Now define
\begin{equation} \label{eq:J}
J:=B_\phi P(1) \subseteq LP(1)=P.
\end{equation} 

\begin{exam}
\begin{enumerate} 
\item[(i)]
If $m=0$, then $L_\phi$ and $B_\phi$ are trivial and $P$ is the Iwahori; thus, $J=I(1)$ and we recover the Kloosterman automorphic data defined in \cite{HNY}. 
\item[(ii)] Suppose $m=1$ and let $\cP$ denote the maximal ideal of $\cO=\cO_\infty$. Then $B_\phi=\diag(1,1,...,1,*)$, and $P$ and $J$ are as follows:
\[
P = \begin{pmatrix} 
 \cO^\times & \cO & ... & \cO & \cP^{-1} \\
 \cP & \cO^\times & ... & \cO & \cO \\
 ... & ... & ... & ... & ... \\
 \cP & \cP & ... & \cO^\times & \cO \\
 \cP & \cP & ... & \cP & \cO^\times\\
\end{pmatrix},
\quad \quad 
J= \begin{pmatrix} 
 1+\cP & \cO & ... & \cO & \cO \\
 \cP& 1+\cP & ... & \cO & \cO \\
 ... & ... & ... & ... & ... \\
 \cP & \cP & ... & 1+\cP & \cO \\
 \cP^2 & \cP & ... & \cP & \cO^\times\\
\end{pmatrix}.
\]
\end{enumerate} 
\end{exam}

\subsubsection{The character $\mu$} 
We are finally ready to define the character $\mu:J\ra \bQlt$. First, note that $\rho=(\rho_1,\cdots, \rho_m)$ defines a character of $B_\phi$ via the composition 
\[
B_\phi \ra B_\phi/[B_\phi, B_\phi] \simeq (k^\times)^{m} \rar{\rho} \bQlt.
\] 
On the other hand, we have the character $\psi \phi: P(1)\ra \bQlt$. Now define
\begin{equation}\label{eq:mu} 
\mu=\mu(\psi, \phi, \rho): J=B_\phi P(1)\ra \bQlt, \qquad \mu(bp):=\rho(b)\psi\phi(p),\qquad b\in B_\phi,\,\, p\in P(1). 
\end{equation} 
One readily verifies that $\mu$ is indeed a character of $J$. 
Let $\cL_\mu$ be the character sheaf whose Frobenius trace is $\mu$. This concludes the definition of the wild hypergeometric automorphic data.

\subsubsection{The associated integral models}
To the above automorphic data, one associates the integral models $\cG$ and $\cG'$. The former group scheme was defined in \eqref{eq:wildGroup}. The latter is defined by 
\[
\cG'(\cO_x) =
\begin{cases}
I^\opp & x=0; \\ 
J & x=\infty; \\ 
G(\cO_x) & \textrm{otherwise}. 
\end{cases} 
\]
Let $J'\subset G(\cO_\infty)$ be a conjugate of $J$. As shown in Lemma \ref{l:numericalJ} 
$\dim(G(\cO_\infty)/J')=\dim(B)$. 
Thus, the numerical requirement for strict rigidity \eqref{eq:numerical} is satisfied.

\subsection{Main results} \label{ss:main}
As noted in \S \ref{s:hypergeometric}, to every hypergeometric initial data $(\psi, \chi_1,...,\chi_n, \rho_1,...,\rho_m)$, Katz associated the hypergeometric sheaf $\sH=\sH(\psi, \chi_1,...,\chi_n, \rho_1,...,\rho_m)$. Moreover, he proved that $\sH$ is irreducible  if and only if $\chi_i$'s and $\rho_j$'s are disjoint; i.e., $\chi_i\neq \rho_j$ for all $i,j$. In this case, he also proved that $\sH$ is rigid.  

In the previous two subsections, we have associated to the initial data, the corresponding hypergeometric automorphic data (Definitions \eqref{d:tamedata} and \eqref{d:wilddata}).

\begin{thm}\label{t:rigidity} If $\chi_i$'s and $\rho_j$'s are disjoint, then 
the hypergeometric automorphic data is strictly rigid (Definition \ref{d:rigidity})
\end{thm}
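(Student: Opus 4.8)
The plan is to verify Definition \ref{d:rigidity} component by component, handling the tame and wild cases separately, since the numerical requirement \eqref{eq:numerical} has already been checked in both settings. Condition (2)(b) --- that the coarse moduli of $\Bun_{\cZ}^0$ is a point --- follows immediately from the discussion in \S\ref{s:centre}: in the tame case $\cZ$ is ramified only at $x=1$ with $\cZ(\cO_1)=\cO_1^\times$ (the mirabolic contains the centre's worth of diagonal scalars only through $\GL_{n-1}$, so one should check which of $\cO_1^\times$ or $1+\cP_1$ occurs, but in either case the cited computation applies), and in the wild case $\cZ$ is ramified only at $\infty$ with $\cZ(\cO_\infty)=1+\cP_\infty$. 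So the real content is establishing rigidity itself (condition (1)) together with triviality of the stabiliser of the relevant bundle (condition (2)(a)), which is precisely the strategy outlined in \S\ref{s:numerical}: exhibit a single bundle in each component $\Bun_{\cG'}^\alpha$ whose automorphism group is trivial, and show every other bundle is irrelevant by producing a one-parameter subgroup of its stabiliser on which $\gamma_S$ restricts non-trivially. By the identification \eqref{eq:Hk} of the components $\Bun_{\cG'}^\alpha \simeq \Bun_{\cG'}^0$ via the $\Omega$-action (using that one level structure is a Moy--Prasad subgroup $I(j)$, which holds in both our cases at $x=0$ and, in the wild case, one uses that $P(2)$ sits inside the story appropriately), it suffices to treat $\alpha=0$, and then $\gamma_S^\alpha$ is handled uniformly.

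For the \emph{tame case}, I would follow the template of \cite[Theorem 2.4.2]{YunGalois}. Using one-point uniformisation --- here the relevant fact is that every $\cG$-bundle on $\bP^1-\{1\}$ (or on $\bP^1$ minus one of the Iwahori points) is trivial, so $\Bun_{\cG'}^0$ is a double quotient of $G(F_1)$ (or of $G(F_0)$) by the subgroups coming from the other two points intersected with polynomial loops --- one reduces the classification of relevant bundles to a combinatorial/linear-algebra problem about $T\times T\times \mathtt{Q}$-orbits. Concretely, a bundle is a rank $n$ vector bundle on $\bP^1$, generically trivial, with a full flag at $0$ (from $I^\opp$), a full flag at $\infty$ (from $I$), and a hyperplane-plus-vector structure at $1$ (from $\mathtt{Q}$); the Birkhoff/Grothendieck decomposition $\cO(a_1)\oplus\cdots\oplus\cO(a_n)$ together with the relative position of the three flags gives discrete invariants. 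One shows the generic stratum --- where the vector bundle is balanced (all $a_i$ equal) and the flags are in general position relative to one another and to $1$ --- consists of a single point with trivial automorphism group, and that on every non-generic stratum the automorphism group contains a $\bGm$ (coming from a nontrivial splitting or a flag incidence) on which the character $\ochi\boxtimes\bQl\boxtimes\rho$ acts through a nontrivial Kummer sheaf, using disjointness $\chi_i\neq\rho_j$ at the places where the relevant one-parameter subgroup mixes the $0$- and $\infty$-flags. This last point is where disjointness of the $\chi$'s and $\rho$'s is essential: a stabiliser automorphism that is invisible to $\ochi$ alone or to $\rho$ alone becomes visible to the product once $\chi_i\neq\rho_j$.

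For the \emph{wild case}, I expect the main obstacle, exactly as the authors forecast in step (d) of the outline. Here one uses the uniformisation $\Bun_{\cG'}^0 = P^{-}\backslash G(F_\infty)/J$ in the flavour of \cite[Proposition 1.1]{HNY}, but unlike the Kloosterman situation the generic (minimal-automorphism) bundle is \emph{not} the trivial one --- it is the bundle forced by the semistable-but-not-stable functional $\phi=\phi_\spp$ and the Borel $B_\phi$ --- so one must first \emph{locate} it. I would do this by analysing the $\Ad$-action of the Moy--Prasad graded pieces: a bundle is irrelevant as soon as its stabiliser contains a subgroup on which $\psi\phi$ (the wild part of $\mu$) or $\rho$ (the tame part, through $B_\phi$) is non-constant, and the combinatorics of the normalised Kac coordinate of $\check\rho/d$ controls which root subgroups $U_{\tilde\alpha}$ can appear in a stabiliser. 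The key input is the representation-theoretic characterisation of barycentres of facets in the standard apartment proved in the appendix: it pins down exactly which affine roots are "small" for the parahoric $P$, hence which unipotent directions survive in $P(1)/P(2)$, and thereby forces the stabiliser of any non-generic bundle to meet $\ker(\psi\phi)^{c}$ or to project nontrivially to $B_\phi/[B_\phi,B_\phi]$. Pairing this with disjointness at $0$ (the $\ochi$ contribution) eliminates all but one orbit, and a direct computation with $\exp(X)=\mathrm{Id}+X$ on the distinguished bundle shows its automorphism group is trivial. The hardest part is precisely this "locate the generic bundle and show everything else is irrelevant" step in the wild case, because the level structure $P(2)$ together with $B_\phi\ltimes P(1)/P(2)$ is genuinely more intricate than the Iwahori-level structure of \cite{HNY}, and the relevant orbit is not the base point of the uniformisation.
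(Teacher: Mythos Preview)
Your high-level strategy matches the paper's exactly: verify \S\ref{s:centre} for condition (2)(b), then on each component exhibit a unique bundle with trivial automorphisms and kill every other bundle by producing a one-dimensional subscheme of its stabiliser on which $\gamma_S$ is non-constant, using disjointness $\chi_i\neq\rho_j$ at the crucial step. The paper does \emph{not} reduce to $\alpha=0$ via $\Hk_\alpha$ for the rigidity proof itself; it parametrises all components uniformly and the $\alpha$-dependence is explicit (the generic bundle is labelled $(\tw_1^\alpha,\mathring g)$ tame, $(\tw_1^\alpha\tw_d,\mathring u)$ wild).

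Where your plan diverges is in the concrete organisation. In the \emph{tame} case the paper does not work directly with Grothendieck splittings and ``three flags in general position''; instead it uniformises at $\infty$, uses Birkhoff to write $\Bun(I^\opp,I)(k)=\bigsqcup_{\tw\in\tW}I^-\backslash I^-\tw I/I$, and then treats the $\mathtt{Q}$-level at $1$ as a $G/Q$-fibration over this. So a $\cG'$-bundle is a pair $(\tw, G(\tw)\text{-orbit in }G/Q)$ where $G(\tw)=\ev_1(I^-\cap\tw I\tw^{-1})$. The mirabolic flag variety $G/Q$ is then analysed via its Bruhat cells $X_i=Bw_iQ/Q$, and the generic locus is the single cell $\mathring X_1\subset X_1$ where all $U_{\alpha_{1j}}$-coordinates are nonzero. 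The one-dimensional witnesses are either $T_j\subset T$ (when the bundle lands in a bad Bruhat cell) or an explicit curve $Y_\beta\subset TU_\beta$ (when $\tw\notin\Omega$), and in each case the trace computes to $\chi_j^{-1}\rho_l$ for some indices, nontrivial by disjointness. Your three-flag picture could presumably be made to work, but the paper's two-step decomposition is what makes the case analysis short.

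In the \emph{wild} case your plan has the right ingredients but is missing the key technical constructions that make the argument go through. The paper introduces: (a) an element $\tw_d\in\tW$ conjugating $P$ to a parahoric $P'\subset G(\cO_\infty)$, so that the generic bundles are labelled by $\tw\in\Omega\tw_d$ rather than $\Omega$; (b) a specific commutative subgroup $\Upsilon\subset U_L$ complementary to $U_\phi$, with generic locus $\mathring\Upsilon=\Ad_{T_\phi}(\mathring u)$; and (c) a refined parametrisation in which every bundle is represented by $x=\tw\ell$ with $\tw\alpha>0$ for all $\alpha\in\Delta(L)$ and $\ell=wu$, $w\in W_L$, $u\in\Upsilon$. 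The case analysis (Proposition~\ref{p:wildAut}) then splits on whether $u\in\mathring\Upsilon$, whether $w=1$, and whether $\tw\in\Omega\tw_d$; the barycentre characterisation (Proposition~\ref{p:simple affine roots}) enters precisely to show that $\tw\notin\Omega\tw_d$ forces some lowest weight $\delta\in\wt^-(V)$ with $\tw\delta<0$, whence $\ell^{-1}U_\delta\ell\subset\Aut_{\cG'}(\cE)$, and the irrelevance then uses $\phi(\Ad_{U_L}E_\delta)\neq 0$ (Corollary~\ref{c:phisp}). Your description ``which affine roots are small for $P$'' gestures at this but does not isolate $\Upsilon$ or $\tw_d$, without which I do not see how you would actually locate the generic bundle or carry out the case split.
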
 

This theorem is proved in \S \ref{s:tameRigidity} (resp. \S \ref{s:wildRigidity}) for the tame (resp. wild) case.

\subsubsection{} 
It follows from Theorems \ref{t:Yun} and \ref{t:rigidity} that whenever $\chi_i$'s and $\rho_j$'s are disjoint, we have a Hecke eigensheaf $\sA=\sA(\psi, \chi_1,...,\chi_n, \rho_1,...,\rho_m)$ on $\Bun_\cG$. It remains to identify its Hecke eigenvalue:

\begin{thm}\label{t:eigenvalue} 
The Hecke eigenvalue of $\sA$ is, after base change to $\overline{k} $, isomorphic to $\sH\otimes_k \overline{k}$. 
\end{thm}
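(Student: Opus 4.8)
The plan is to compute the Hecke eigenvalue $\sE := E_{\on{Std}}$ explicitly via formula \eqref{eq:eigenvalue}, $\sE \simeq \pi_{2,!}\pi_1^*\cL_0[n-1]$, and then compare its Frobenius trace function with that of $\sH$. Since $\sH$ is irreducible (as $\chi_i$'s and $\rho_j$'s are disjoint), once I know $\sE$ is a local system with the same trace function as $\sH$ on $(\bP^1-S)(\overline{k})$, Chebotarev density (or the fact that two semisimple local systems with the same trace function agree) forces $\sE\otimes_k\overline{k}\simeq \sH\otimes_k\overline{k}$. So the substance is entirely in (a) making the relevant part of the Hecke correspondence \eqref{eq:relevantHecke} completely explicit, and (b) identifying the pushforward integral with the hypergeometric sum from \S\ref{sss:hyp sum}.

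First I would use the lattice-theoretic description of $\Bun_\cG$ established in the rigidity sections to pin down $\mathring{\Bun_\cG^0}$ together with its local system $\cL_0$ (the clean extension of $\gamma_S^0$ to the generic orbit). In the tame case this should recover the diagram \eqref{eq:HeckeTame}: $\mathring{\GR}_{\omega_1}$ is identified with $(\bGm-\{1\})^n$ with coordinates $y_1,\dots,y_n$, the map $\pi_2$ records the position $x = \prod_i (y_i-1)/y_i$ of the Hecke modification in $\bP^1-\{0,1,\infty\}$, and $\pi_1$ records the resulting point of the generic orbit in terms of the $I^\opp/I^\opp(1)\times I/I(1)\simeq T\times T$ coordinates, namely $\pi_1(y_1,\dots,y_n) = ((1-y_1)^{-1},\dots,(1-y_n)^{-1},-y_1^{-1},y_2^{-1},\dots,y_{n-1}^{-1},-y_n^{-1})$. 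Pulling back the character sheaf $\cL = \cL_{\ochi}\boxtimes\cL_\rho$ along $\pi_1$ and taking $\pi_{2,!}$ gives \eqref{eq:tameEigenvalue}, $\sE = \pi_{2,!}\pi_1^*\cL[n-1]$. Taking Frobenius traces and applying the Grothendieck–Lefschetz trace formula turns this into a character sum over the fibre $\pi_2^{-1}(x)$, which after the change of variables $y_i \mapsto x_i$ (writing $x = \prod x_i / \prod y_j$ in the notation of \S\ref{sss:hyp sum}) must match the finite hypergeometric function $\tr_\sH(x)$ up to the sign $(-1)^{n+m-1}$ absorbed in the shift. In the wild case I would do the analogous thing with the more elaborate diagram involving $\bGm^d\times(\bGm-\{1\})^{n-d}$ and the combinatorics of the normalised Kac coordinate of $\check\rho/d$, recovering Deligne's Kloosterman-type correspondence when $d=n$; here the $\psi\phi$ part of $\mu$ contributes the additive character $\psi(\sum x_i - \sum y_j)$ and the $\rho$ part of $\mu$ on $B_\phi$ contributes $\prod \rho_j(y_j^{-1})$.

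I should also verify that $\sE$ really is a \emph{local system} (lisse sheaf) on $\bP^1-S$ of the expected rank $n$, not merely a complex with the right trace function. One route: since $\sA$ is a clean Hecke eigensheaf (by strict rigidity and Theorem \ref{t:Yun}, together with the cleanness discussion in \S\ref{ss:Eigenvalue}), the general formalism already guarantees $\sE$ is a local system on an open dense subset; comparing with the alternative realisation $\sF$ of \S\ref{s:hypergeometric} — which, because $\chi_i\neq\rho_j$, is the simple perverse sheaf $\sH[1]$ — then upgrades the trace-function identity to an isomorphism of perverse sheaves on the generic locus, and the eigensheaf property propagates lisseness to all of $\bP^1-S$. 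Alternatively one can invoke that a Hecke eigenvalue for an irreducible-type automorphic datum is automatically irreducible, hence a local system, and then rank is read off from $\pi_{2,!}$ being generically of the expected rank.

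The main obstacle I expect is step (a): getting the explicit form of the maps $\pi_1,\pi_2$ in \eqref{eq:relevantHecke} right, i.e. correctly tracking how a single Hecke modification of type $\omega_1$ at a moving point $x$ interacts with the level structures $I^\opp(1)$ at $0$, $\mathtt Q$ at $1$ (resp. $P(2)$ at $\infty$), and how the clean local system $\cL_0$ restricts to $\mathring{\GR}_{\omega_1}$. This is a concrete but delicate computation in the lattice model of $\Bun_\cG$, and the wild case is genuinely harder because the generic bundle in $\Bun_\cG^0$ is non-trivial and the combinatorics of $\check\rho/d$ enters; this is where the bulk of \S\ref{s:tame eigenvalue} and \S\ref{s:wild eigenvalue} will go. Once the correspondence is identified, matching the resulting character sum to the Katz/Greene hypergeometric sum in \S\ref{sss:hyp sum} is essentially bookkeeping, modulo keeping careful track of signs, Tate twists, and the cohomological shift $[n-1]$.
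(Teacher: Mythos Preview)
Your proposal is correct and follows essentially the same route as the paper: make the correspondence \eqref{eq:relevantHecke} explicit via the lattice model of $\Bun_\cG$, compute $E_{\on{Std}}=\pi_{2,!}\pi_1^*\cL_0[n-1]$, and compare Frobenius traces with the hypergeometric sum of \S\ref{sss:hyp sum}. One small correction: the trace functions do not match up to a mere sign; in the tame case the paper's computation produces a nonzero scalar built from Gauss sums $\prod_i G(\psi,\chi_i\rho_i^{-1})$ (and a sign $\varepsilon$), and the wild case similarly involves Gauss-sum factors, so the ``bookkeeping'' you flag is a bit more than signs and shifts---but this does not affect the argument since any nonzero scalar suffices once $\sH$ is known to be irreducible.
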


This theorem is proved in \S \ref{s:tame eigenvalue} (resp. \S \ref{s:wild eigenvalue}) for the tame (resp. wild) case.

\subsection{Aside: Hypergeometric automorphic representations} \label{sss:representations} 
In this subsection, we explain what the above considerations tell us about hypergeometric automorphic representations (\S \ref{sss:hypergeomtricRep}). We do not formulate precise results here since we have chosen (for convenience and brevity) not to deal with the full package of Yun's rigid automorphic data (cf. the beginning of \S \ref{s:rigid}). The discussion in this subsection is informal and not used elsewhere in the text.

\subsubsection{Tame case} 
Our results on tame hypergeometric Hecke eigensheaves indicate that, up to unramified twists and central characters,  there is a unique automorphic representation $\displaystyle \pi=\otimes'_{x\in X} \pi_x$ of $G(\bA)$ satisfying: 
\begin{itemize} 
\item[(i)] $\pi_x$ is unramified for all $x\in \bP^1-\{0,1,\infty\}$; 
\item[(ii)] $\pi_0$ has a $(I^\opp, \chi)$-fixed vector; 
\item[(iii)] $\pi_1$ has a $\mathtt{Q}$-fixed vector;
\item[(iv)] $\pi_\infty$ has a $(I,\rho)$-fixed vector. 
\end{itemize}

\subsubsection{Wild case} 
Similarly, up to unramified twists and central characters,  there should be a unique automorphic representation $\displaystyle \pi=\otimes'_{x\in X} \pi_x$  satisfying: 
\begin{itemize} 
\item[(i)] $\pi_x$ is unramified for all $x\in \bP^1-\{0,\infty\}$; 
\item[(ii)] $\pi_0$ has a $(I^\opp, \chi)$-fixed vector; 
\item[(iii)] $\pi_\infty$ has a $(J,\mu)$-fixed vector. 
\end{itemize} 
Note that the representation $\pi_\infty$ is supercuspidal if and only if $d=n$; i.e., if and only if we are in the Kloosterman setting, for otherwise, the local Langlands parameter is reducible (\S \ref{sss:wildHypergeometric}).\footnote{In particular, aside from the Kloosterman case, hypergeometric automorphic representations are not of the type studied in \cite{TemplierSawin}.}

\section{Rigidity in the tame case}\label{s:tameRigidity}
The goal of this section is to prove Theorem \ref{t:rigidity} in the tame case.  We start by parameterising the objects of $\Bun_{\cG'}$ and describing their automorphisms.

\subsection{Parameterisation of $\cG'$-bundles}\label{s:tame bundles}

\subsubsection{An auxiliary moduli stack} \label{s:auxiliary}
Let $\Bun(I^{\opp},I)$
denote moduli stack of rank $n$ vector bundles on $\bP^1$ with $I^\opp$-level structure at $0$ and $I$-level structure at $\infty$. Let $I^-:=I^\opp \cap G(k[s,s^{-1}])$. According to \cite[Proposition 1.1]{HNY}, we have 
\[
\Bun(I^{\opp},I)(k)= 
I^-\backslash G(k(\!(s)\!)/I = \bigsqcup_{\tw\in\widetilde{W}}I^-\backslash I^-\tw I/I.
\]
Here, the first equality follows from one-point uniformisation, which states that every bundle in $\Bun(I^\opp, I)$ is trivialisable on $\bP^1-\{\infty\}$. The second equality is the Birkhoff decomposition. 

\subsubsection{}
We conclude that (the isomorphism classes of) bundles in $\Bun(I^\opp, I)$ are labelled by elements of $\tW$. 
For  each $\tw \in \tW$, the automorphism group of the corresponding bundle is given by
\[
S(\tw):=\Stab_{I^-}(\tw I) = I^-\cap\tw I\tw^{-1}. 
\]
Note that  $S(\tw)\supseteq T$ with equality if and only if $\tw\in \Omega$.
Thus, the generic locus of $\Bun(I^\opp, I)$ consists of bundles labelled by $\tw\in \Omega$.

\subsubsection{} 
Recall that $\Bun_{\cG'}$ is the moduli stack of rank $n$ vector bundles on $\bP^1$ with $I^\opp$, $\mathtt{Q}$ and $I$ level structure at $0$, $1$, and $\infty$, respectively. 
The canonical map $\pi: \Bun_{\cG'}\ra \Bun(I^\opp, I)$ which forgets the level structure at $1$ is a $G/Q$-fibration.
Thus, to each bundle  $\cE\in \Bun_{\cG'}$, we can associate a pair $(\tw, g)\in \tW\times G/Q$. 
Let $\ev_1:  G(k[s,s^{-1}])\rightarrow G(k)$ denote the evaluation map sending $s$ to $1$. Let 
\begin{equation} \label{eq:Gtw}
G(\tw):=\ev_1(S(\tw)).
\end{equation}
If $\cF\in \Bun(I^\opp, I)$ is a bundle labelled by $\tw$, then  $S(\tw)$ acts on the fibre $\pi^{-1}(\cF)\simeq G/Q$ via $G(\tw)$. Thus, we obtain: 

\begin{lem}\label{l:tame G'-bundles}
The isomorphism classes of $\cG'$-bundles are in bijection with pairs $(\tw,G(\tw)g)$, where $\tw\in\tW$ and  $G(\tw)g\subset G/Q$ is a $G(\tw)$-orbit in $G/Q$.
\end{lem}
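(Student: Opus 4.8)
The plan is to assemble the bijection from two pieces that are each already essentially recorded in the preceding discussion: the Birkhoff decomposition identifying $\Bun(I^\opp,I)$ with $\tW$ together with its automorphism groups $S(\tw)$, and the fact that $\pi\colon\Bun_{\cG'}\to\Bun(I^\opp,I)$ is a $G/Q$-fibration. So first I would fix a bundle $\cF\in\Bun(I^\opp,I)$ labelled by $\tw\in\tW$, chosen as the standard representative $\tw I$ under one-point uniformisation, so that $\pi^{-1}(\cF)$ is canonically identified with $G/Q$ (the trivialisation of $\cF$ on $\bP^1-\{\infty\}$ lets us evaluate the $\mathtt{Q}$-level structure at $x=1$, landing in $G/Q$). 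Then I would observe that the groupoid of $\cG'$-bundles mapping to $\cF$ is the quotient groupoid $[S(\tw)\backslash (G/Q)]$, where $S(\tw)=I^-\cap\tw I\tw^{-1}$ acts on $G/Q$ through its image under the evaluation map $\ev_1$, i.e. through $G(\tw)=\ev_1(S(\tw))$; this is exactly the content of the sentence preceding the lemma. Hence isomorphism classes of $\cG'$-bundles over $\cF$ correspond to $G(\tw)$-orbits on $G/Q$.

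Next I would run over all $\tw$. Since $\pi$ induces a bijection on connected components and $\Bun(I^\opp,I)(k)=\bigsqcup_{\tw\in\tW}I^-\backslash I^-\tw I/I$ has its isomorphism classes indexed by $\tW$, the fibre decomposition above gives that isomorphism classes in $\Bun_{\cG'}$ are in bijection with pairs $(\tw,\mathcal{O})$ where $\mathcal{O}$ is a $G(\tw)$-orbit on $G/Q$; writing $\mathcal{O}=G(\tw)g$ for a representative $g$ yields the stated form. The only point requiring a line of care is well-definedness: a different choice of trivialisation of $\cF$ on $\bP^1-\{\infty\}$, or a different Birkhoff representative, changes the identification $\pi^{-1}(\cF)\simeq G/Q$ by the action of an element of $S(\tw)$, hence does not change the set of $G(\tw)$-orbits, so the bijection is canonical.

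The main (and really only) obstacle is bookkeeping the groupoid quotient correctly: one must check that the automorphisms of a $\cG'$-bundle lying over $\cF$ are precisely the elements of $S(\tw)$ stabilising the point $gQ\in G/Q$ (after evaluation at $1$), so that the $S(\tw)$-action on the fibre genuinely factors through $G(\tw)=\ev_1(S(\tw))$ — this uses that the kernel of $\ev_1$ on $S(\tw)$ acts trivially on $G/Q$, which holds because $S(\tw)\subset I^-\subset G(k[s,s^{-1}])$ and an element congruent to $1$ modulo $(s-1)$ acts trivially on the fibre at $s=1$. Granting this, the lemma is immediate. I would therefore present the proof as: (i) recall $\pi$ is a $G/Q$-fibration and $\Bun(I^\opp,I)$ is parameterised by $\tW$ with automorphism groups $S(\tw)$; (ii) compute the fibre groupoid $\pi^{-1}(\cF)$ as $[G(\tw)\backslash (G/Q)]$; (iii) conclude by taking isomorphism classes, noting independence of the choices made.
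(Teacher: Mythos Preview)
Your proposal is correct and follows exactly the route the paper takes: the paper itself does not give a separate proof, writing only ``Thus, we obtain'' after the paragraph recording that $\pi\colon\Bun_{\cG'}\to\Bun(I^\opp,I)$ is a $G/Q$-fibration and that $S(\tw)$ acts on the fibre via $G(\tw)=\ev_1(S(\tw))$. Your additional remarks on well-definedness and on the kernel of $\ev_1$ acting trivially are correct elaborations of what the paper leaves implicit.
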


\subsection{Mirabolic flag variety} To understand automorphisms of $\cG'$-bundles, we need explicit descriptions of $G/Q$ and $G(\tw)$. 
Note that $ZQ$ is a (maximal) parabolic subgroup of $G$; thus, the Bruhat decomposition implies 
\[
G/Q=
 \bigsqcup_{i=1}^n B w_iQ/Q, 
\]
where $w_i\in W$ is the transposition $(in)\in S_n$. Note that the transpositions $(in)$, $i\in \{1,2,...,n\}$, are representatives for $W(G)/W(ZQ)=S_n/S_{n-1}$.

\subsubsection{Bruhat cells} 
Let $X_i:=Bw_iQ/Q$ denote the Bruhat cell associated to $w_i$. 
The torus $T$ acts on $G/Q$ by left multiplication, preserving each Bruhat cell. 
The action of $T$ on the cell $X_1$ has a unique open dense orbit $\mathring{X}_1$, which can be explicitly described as follows.  
First, observe that
\[
X_1=T (\prod_{j=2}^n U_{\alpha_{1j}}) w_1 Q/Q.
\]
Note that $U_{\alpha_{1j}}$'s are root subgroups in the quotient $U/(U\cap w_1Qw_1^{-1})$. For each $j\in \{2,...,n\}$, let $X_{1j}$ denote the closed subscheme of $X_1$ consisting of those elements whose $U_{\alpha_{1j}}$ component is trivial. 
Let $\mathring{X}_1:= X_1-\bigcup_{j=2}^n X_{1j}$. Then $\mathring{X}$ is an open dense subvariety of $X$ and $\mathring{X}_1=T\mathring{g}$, where 
\begin{equation}\label{eq:generic g}
\mathring{g}: = \exp \big(\sum_{j=2}^n E_{1j}\big) w_1Q/Q. 
\end{equation}

\subsubsection{Generic locus}
We think of $\mathring{X}_1$ as the generic locus of $X_1$. This terminology is further justified by Lemma \ref{l:Stab_T of G/Q}.(iii) below. 
In view of \S \ref{s:auxiliary}, it is natural to expect that the generic locus of $\Bun_{\cG'}$ consists of bundles labelled by pairs $(\tw, \mathring{g})$, where $\tw\in\Omega$. We confirm this expectation in Corollary \ref{c:tame genericLocus}.

\subsubsection{Stabilisers} 
We record some basic facts about the action of $G$ on $G/Q$. The proofs are direct computations and omitted. Recall the subgroup $T_j\subseteq T$ defined in \eqref{eq:Tj}. 

\begin{lem}\label{l:Stab_T of G/Q}
\begin{enumerate} 
\item[(i)] Let $i\in \{2,...,n\}$ and $g\in X_i$, then $T_1\subseteq \Stab_T(g)$. 
\item[(ii)] Let $j\in \{2,...,n\}$ and $g\in X_{1j}$, then $T_j\subseteq \Stab_T(g)$. 
\item[(iii)]  $\Stab_T(\mathring{g})=\{1\}$. 
\end{enumerate} 
\end{lem}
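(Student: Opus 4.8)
The plan is to derive all three statements from a single explicit model for $G/Q$. Since $Q$ consists precisely of the matrices whose last row equals the covector $e_n^*=(0,\dots,0,1)$, it is the stabiliser of $e_n^*$ for the contragredient action $g\cdot\xi=\xi g^{-1}$ of $G$ on $(k^n)^*$; as $\GL_n$ acts transitively on nonzero covectors, the orbit map $gQ\mapsto e_n^*g^{-1}$ identifies $G/Q$ with the space of nonzero row vectors $\xi=(\xi_1,\dots,\xi_n)$. Under this identification the left action of $T$ on $G/Q$ becomes the coordinatewise scaling $t\cdot\xi=(t_1^{-1}\xi_1,\dots,t_n^{-1}\xi_n)$, so that for every such $\xi$ one has
\[
\Stab_T(\xi)=\prod_{\,i\,:\,\xi_i=0}T_i,
\]
and in particular $\Stab_T(\xi)=\{1\}$ exactly when no coordinate of $\xi$ vanishes. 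It then remains only to locate $X_i$, $X_{1j}$, and $\mathring{g}$ inside this model and read off which coordinates vanish.

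First I would translate the Bruhat cells. For $w_i=(in)$ a short computation gives $e_n^*w_i^{-1}=e_i^*$, so $w_iQ$ corresponds to $e_i^*$ and $X_i=Bw_iQ/Q$ corresponds to the $B$-orbit of $e_i^*$; since $B$ is the group of upper-triangular matrices, this orbit is exactly the set of covectors whose first nonzero coordinate sits in position $i$. For $i\ge2$ every such covector has $\xi_1=0$, hence $T_1\subseteq\Stab_T(g)$ for every $g\in X_i$; this is (i) (one in fact gets $T_1,\dots,T_{i-1}\subseteq\Stab_T(g)$, but only $T_1$ is needed).

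Next I would parametrise $X_1$. A point of $X_1$ may be written as $t\bigl(I+\sum_{j=2}^{n}x_jE_{1j}\bigr)w_1Q/Q$ with $t\in T$ and $x_j\in k$, its ``$U_{\alpha_{1j}}$-component'' being the scalar $x_j$; using $E_{1j}E_{1k}=0$ for $j,k\ge2$ and computing the last row of the inverse of $t\bigl(I+\sum_{j\ge2}x_jE_{1j}\bigr)w_1$, one finds that this point corresponds to the covector
\[
\xi=(t_1^{-1},\,-x_2t_2^{-1},\,\dots,\,-x_nt_n^{-1}).
\]
Thus, for $j\ge2$, the component $x_j$ vanishes if and only if $\xi_j=0$, so $X_{1j}$ corresponds to $\{\xi\in X_1:\xi_j=0\}$ and therefore $T_j\subseteq\Stab_T(g)$ for $g\in X_{1j}$; this is (ii). Consequently $\mathring{X}_1=X_1\setminus\bigcup_{j=2}^{n}X_{1j}$ corresponds to the covectors all of whose coordinates are nonzero, and the chosen representative $\mathring{g}=\exp\!\bigl(\sum_{j=2}^{n}E_{1j}\bigr)w_1Q/Q$ (for which the exponential collapses to $I+\sum_{j=2}^{n}E_{1j}$) corresponds to $(1,-1,\dots,-1)$; by the displayed formula for $\Stab_T$ this gives $\Stab_T(\mathring{g})=\{1\}$, which is (iii).

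\textbf{Main obstacle.} There is no real difficulty here: the entire lemma is bookkeeping. The only step demanding care is the dictionary between the ``group'' data (the Bruhat cells, the root subgroups $U_{\alpha_{1j}}$, the chosen representative $\mathring{g}$) and the ``covector'' model, namely keeping track of inverses and transpositions so that the correct diagonal coordinate is matched to each $T_j$ and to each $U_{\alpha_{1j}}$-component. Working with $Q$ rather than $ZQ$, so that $G/Q$ is the punctured affine space of nonzero covectors rather than a projective space, is precisely what makes this dictionary clean: there is no scaling ambiguity to contend with, and (i)--(iii) become one-line coordinate observations.
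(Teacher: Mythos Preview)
Your proof is correct. The paper omits the proof entirely, stating only ``The proofs are direct computations and omitted,'' so there is no argument to compare against in detail.

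That said, your choice to set up the covector model $G/Q\simeq (k^n)^*\setminus\{0\}$ via $gQ\mapsto e_n^*g^{-1}$ is a clean way to organise what would otherwise be three separate matrix calculations: it reduces all three parts to the single observation $\Stab_T(\xi)=\prod_{i:\xi_i=0}T_i$, after which (i)--(iii) become immediate once the cells $X_i$, $X_{1j}$ and the point $\mathring g$ are located in coordinates. The computations you give (the $B$-orbit of $e_i^*$ being the covectors with first nonzero entry in position $i$, and the explicit formula $\xi=(t_1^{-1},-x_2t_2^{-1},\dots,-x_nt_n^{-1})$ for points of $X_1$) are accurate. This is likely what the authors had in mind by ``direct computation,'' perhaps done coordinate-by-coordinate rather than via the global identification; your packaging has the mild advantage that it makes the stronger statement $T_1,\dots,T_{i-1}\subseteq\Stab_T(g)$ for $g\in X_i$ visible for free.
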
 

Next, let $\beta=\alpha_{pq}$ be a root of $G$. Let $Y_\beta$ be the subscheme of $TU_\beta$ consisting of elements of the form $tu(t)$, where $t=\diag(1,...,t_q, ..,1)\in T_q$ and
\begin{equation}\label{eq:tame u(t)}
u(t):=
\begin{cases} 
\exp((1-t_q) E_{pq}) & p\neq 1\neq q;\\
\exp((t_q-1)E_{pq}) & \textrm{otherwise}. 
\end{cases} 
\end{equation} 
Note that $Y_{\beta}$ is a variety isomorphic to $\bA^1-\{0\}$; however, it is not a group. 

\begin{lem}\label{l:1-dim stab of G/Q}
$Y_{\beta}$ is a subscheme of the identity component of   $\Stab_{TU_\beta} (\mathring{g})$. 
\end{lem}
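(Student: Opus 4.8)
The statement has two halves: that $Y_{\beta}$ is contained in $\Stab_{TU_{\beta}}(\mathring g)$, and that it in fact lies in the \emph{identity component} of this stabiliser. The plan is to dispose of the second half first, since it is immediate once the first is known: as recorded just above, $Y_{\beta}$ is a variety isomorphic to $\bGm$, hence irreducible and in particular connected, and it contains the identity element (put $t_q=1$, so that $t=1$ and $u(t)=\exp(0)=1$); a connected subvariety of an algebraic group that passes through the identity is contained in the identity component. So all the content lies in verifying that $tu(t)\cdot\mathring g=\mathring g$ in $G/Q$ for every $t_q\in k^{\times}$.

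For this I would first pass to a coordinate-free model of $G/Q$. Write $e_1,\dots,e_n$ for the standard basis of $k^n$ and $V_{n-1}:=\langle e_1,\dots,e_{n-1}\rangle$. A direct check shows that $Q=\Stab_G(V_{n-1},\overline{e_n})$, where $\overline{e_n}$ is the class of $e_n$ in $k^n/V_{n-1}$; consequently $G/Q$ is identified with the variety of pairs $(H,\bar v)$, where $H\subset k^n$ is a hyperplane and $\bar v$ a nonzero element of $k^n/H$, via $gQ\mapsto(gV_{n-1},\overline{ge_n})$. Under this identification one computes, using that $w_1e_n=e_1$ and that $\exp\big(\sum_{j=2}^n E_{1j}\big)$ fixes $e_1$ and sends $e_j\mapsto e_j+e_1$ for $j\ge 2$, that the point $\mathring g$ of \eqref{eq:generic g} corresponds to the pair $(H_{\ast},\overline{e_1})$, where
\[
H_{\ast}=\big\langle\, e_j+e_1 \ :\ 2\le j\le n\,\big\rangle .
\]
So it remains to verify, for $h=tu(t)\in Y_{\beta}$ with $\beta=\alpha_{pq}$, that $hH_{\ast}=H_{\ast}$ and $he_1\equiv e_1\pmod{H_{\ast}}$.

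This is a short case analysis along the three branches of \eqref{eq:tame u(t)}, according to whether $q=1$, $p=1$, or $1\notin\{p,q\}$. In each branch $h$ fixes every standard basis vector but one and acts in a controlled linear way on the remaining one; for instance, when $1\notin\{p,q\}$ one gets $he_i=e_i$ for $i\ne q$ and $he_q=t_qe_q+(1-t_q)e_p$, so that $he_1=e_1$ and
\[
h(e_q+e_1)=t_q(e_q+e_1)+(1-t_q)(e_p+e_1)\in H_{\ast},\qquad h(e_j+e_1)=e_j+e_1\quad(j\ne q),
\]
whence $hH_{\ast}=H_{\ast}$ and the claim holds; the branches $q=1$ and $p=1$ are entirely analogous. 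The one mildly delicate point — and the main obstacle, such as it is — is the bookkeeping forced by the Weyl representative $w_1=(1\,n)$: it interchanges the indices $1$ and $n$, so once one has moved to the $(H_{\ast},\overline{e_1})$-picture one must keep careful track of which index plays the distinguished role in each branch. This is precisely what dictates the switch between the scalars $(1-t_q)$ and $(t_q-1)$ in \eqref{eq:tame u(t)}: with the other choice the vectors above would land in a nontrivial coset of $H_{\ast}$ rather than in $H_{\ast}$ itself, and the cancellation would fail. There is no conceptual difficulty beyond this.
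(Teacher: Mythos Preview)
Your proof is correct. The paper omits the proof entirely, labelling it a direct computation; your argument supplies exactly such a computation, organised cleanly via the identification of $G/Q$ with pairs $(H,\bar v)$, and the case analysis along the three branches of \eqref{eq:tame u(t)} goes through as you describe.
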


\subsection{The group $G(\tw)$.} 
Recall that $G(\tw)$ is the image of $S(\tw)$ under evaluation map $\ev_1$ \eqref{eq:Gtw}.
It is, therefore, generated by $T$ together with certain root subgroups, where the roots are the image of affine roots in $S(\tw)$ under $\ev_1$. Thus, we obtain:  
\begin{lem}\label{l:image group}
$\Phi(G(\tw))=\varnothing$ if and only if $\tw\in \Omega$, in which case $G(\tw)=T$. 
\end{lem}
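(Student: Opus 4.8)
\emph{Proof proposal.} The plan is to reduce the statement to the structure of $S(\tw)=I^-\cap\tw I\tw^{-1}$ as a subgroup of the loop group at $\infty$ and to the behaviour of $\ev_1$ on affine root subgroups. The implication $\tw\in\Omega\Rightarrow G(\tw)=T$ is immediate from what precedes: it was noted that $S(\tw)=T$ for $\tw\in\Omega$, and $\ev_1$ (the homomorphism $G(k[s,s^{-1}])\to G(k)$ specialising $s$ to $1$) restricts to the identity on the constant torus $T\subseteq G(k)$; hence $G(\tw)=\ev_1(S(\tw))=T$ and $\Phi(G(\tw))=\varnothing$.

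For the converse, assume $\tw\notin\Omega$, so that $S(\tw)\subsetneq$ is ruled out and in fact $S(\tw)\supsetneq T$ by the dichotomy recalled above. I would first establish the affine root group decomposition of $S(\tw)$. Both $I^-$ and $\tw I\tw^{-1}$ are subgroups of $G(F_\infty)$ normalised by the constant torus $T$ and adapted to the standard apartment; each is a semidirect product of $T$ with a product of affine root subgroups $U_{\talpha}$, $\talpha\in\Phi^\aff$, and the intersection of two such is again of this form. Concretely, for each finite root $\alpha$ one computes $I^-\cap U_\alpha(F_\infty)=U_\alpha(M_\alpha)$ and $\tw I\tw^{-1}\cap U_\alpha(F_\infty)=U_\alpha(s^{c_\alpha(\tw)}\cO_\infty)$, where $M_\alpha$ is the $k$-span of monomials $s^j$ with $j\le 0$ (or $j\le -1$, depending on the sign of $\alpha$) and $c_\alpha(\tw)\in\bZ$ is read off from the translation and Weyl parts of $\tw$. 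Intersecting gives $S(\tw)\cap U_\alpha(F_\infty)=U_\alpha(V_\alpha)$ with $V_\alpha$ the finite-dimensional $k$-span of the monomials $s^j$, $c_\alpha(\tw)\le j\le 0$; hence $S(\tw)=T\cdot\prod_\alpha U_\alpha(V_\alpha)$, and since $S(\tw)\ne T$ some $V_\alpha\ne 0$.

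It then remains to push this forward along $\ev_1$. The restriction of $\ev_1$ to the monomial space $V_\alpha$ is the linear map $s^j\mapsto 1$, which is surjective onto $k$ whenever $V_\alpha\ne 0$; thus $\ev_1(U_\alpha(V_\alpha))=U_\alpha$. Therefore $U_\alpha\subseteq\ev_1(S(\tw))=G(\tw)$, so $\alpha\in\Phi(G(\tw))$ and $\Phi(G(\tw))\ne\varnothing$, as desired. (The same computation in fact identifies $\Phi(G(\tw))$ with the root subsystem generated by $\{\alpha\in\Phi : V_\alpha\ne 0\}$, though only non-emptiness is needed here.)

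I expect the one substantive point to be the first step of the converse: the affine root group decomposition of $S(\tw)$ and, crucially, the fact that each $V_\alpha$ is spanned by monomials in $s$, so that $\ev_1$ annihilates no non-zero $V_\alpha$. This is a routine Bruhat--Tits-type computation; the only real care needed is the bookkeeping of the exponents $c_\alpha(\tw)$ in terms of $\tw\in W\ltimes X_*(T)$, and (as elsewhere in this section) no hypothesis on the characteristic of $k$ enters, since every unipotent element involved is the exponential of a single root vector.
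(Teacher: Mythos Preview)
Your argument is correct and is at heart the same as the paper's: both use that $S(\tw)$ is $T$ together with certain affine root subgroups, and that $\ev_1$ carries any affine root subgroup $U_{\alpha+m}\subseteq S(\tw)$ onto the finite root subgroup $U_\alpha$. The paper is a touch more direct for the converse: rather than working out the full decomposition $S(\tw)=T\cdot\prod_\alpha U_\alpha(V_\alpha)$, it uses the characterisation of $\Omega$ as the stabiliser of the fundamental alcove, so that $\tw\notin\Omega$ forces $\tw\alpha_i<0$ for some simple affine root $\alpha_i$, whence $U_{\tw\alpha_i}\subseteq I^-\cap\tw I\tw^{-1}=S(\tw)$ immediately and its image under $\ev_1$ is the required finite root subgroup.
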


\begin{proof}
If $\tw\in\Omega$, then $S(\tw)=I^-\cap I=T=G(\tw)$ and $\Phi(G(\tw))=\varnothing$. For the converse, suppose $\tw\notin\Omega$. Then there exists at least one simple affine root $\alpha_i\in\Delta^\aff(G)=\{\alpha_0,\alpha_1,...,\alpha_{n-1}\}$, such that $\tw\alpha_i<0$. Since $S(\tw)=I^-\cap\tw I\tw^{-1}$, this means $\tw\alpha_i\in\Phi^\aff(S(\tw))$. Let us write $\tw\alpha_i=\alpha+m$, where $\alpha\in\Phi(G)$ and $m\in\bZ$. Then $\ev_1(U_{\tw\alpha_i})=U_\alpha\subseteq G(\tw)$; thus, $\alpha \in \Phi(G(\tw))$; in particular, $\Phi(G(\tw))$ is non-empty. 
\end{proof}

\subsection{Automorphisms of $\cG'$-bundles} 
Let $\cE\in \Bun_{\cG'}$ be a bundle associated to a pair $(\tw,g)\in \tW\times G/Q$.  Then
\begin{equation} 
\Aut_{\cG'}(\cE)\simeq  \ev_1^{-1}(\Stab_{G(\tw)}(g)).
\end{equation} 
 Recall that $G(\tw)=\ev_1(S(\tw))$. Thus, for each $\beta\in \Phi(G(\tw))$, there exists $\tilde{\beta}\in \Phi^\aff(\Stab(\tw))$ such that that $\beta$ is the finite part of $\tilde{\beta}$. We therefore have an isomorphism $\ev_1(TU_{\tilde{\beta}})\simeq TU_\beta$. Let $Y_{\tilde{\beta}} \subset TU_{\tilde{\beta}}$ be the inverse of $Y_\beta$ under this isomorphism. 

\begin{prop} \label{p:tameAut} \mbox{}
\begin{enumerate} 
\item[(i)] If $\tw\in\tW$ and $g\in X_i$, $2\leq i\leq n$, then $T_1\subseteq (\Aut_{\cG'}(\cE))^\circ$. 
\item[(ii)] If $\tw\in\tW$ and $g\in X_{1j}\subseteq X_1-\mathring{X}_1$, $2\leq j\leq n$, then $T_j\subseteq (\Aut_{\cG'}(\cE))^\circ$.
\item[(iii)] If $\tw \notin \Omega$, $g=\mathring{g}\in \mathring{X}_1$, and $\beta\in\Phi(G(\tw))$, then $ Y_{\tilde{\beta}}\subseteq (\Aut_{\cG'}(\cE))^\circ$. 
\item[(iv)] If $\tw\in \Omega$ and $g\in \mathring{X}_1$, then $\Aut_{\cG'}(\cE)$ is trivial.
\end{enumerate} 
\end{prop}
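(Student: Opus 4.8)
The plan is to route all four assertions through the identification
$\Aut_{\cG'}(\cE)\simeq \ev_1^{-1}\big(\Stab_{G(\tw)}(g)\big)$
displayed just before the statement, and in each case to exhibit an explicit connected subgroup or subscheme of the right-hand side by quoting Lemmas \ref{l:Stab_T of G/Q}, \ref{l:1-dim stab of G/Q} and \ref{l:image group}. The single structural fact I will use repeatedly is that $T\subseteq S(\tw)$ for every $\tw\in\tW$ and that $\ev_1$ is the identity on the constant torus $T$; hence $T\subseteq G(\tw)$ always, and any connected subgroup of $\Stab_T(g)$ lifts isomorphically into $(\Aut_{\cG'}(\cE))^\circ$. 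This observation already disposes of (i) and (ii), with no hypothesis on $\tw$: for $g\in X_i$ with $i\ge 2$, Lemma \ref{l:Stab_T of G/Q}(i) gives $T_1\subseteq\Stab_T(g)$, and for $g\in X_{1j}$, Lemma \ref{l:Stab_T of G/Q}(ii) gives $T_j\subseteq\Stab_T(g)$; since $T_1\simeq T_j\simeq\bGm$ is connected, both land in $(\Aut_{\cG'}(\cE))^\circ$.

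For (iv) I would combine three inputs. First, as recalled in \S\ref{s:tame bundles}, $\tw\in\Omega$ forces $S(\tw)=I^-\cap\tw I\tw^{-1}=T$, so by Lemma \ref{l:image group} we get $G(\tw)=T$ and $\ev_1\colon S(\tw)\to G(\tw)$ is the identity map of $T$. Second, since $g=\mathring g\in\mathring X_1$, Lemma \ref{l:Stab_T of G/Q}(iii) gives $\Stab_T(\mathring g)=\{1\}$. Combining, $\Aut_{\cG'}(\cE)\simeq\ev_1^{-1}(\{1\})=\{1\}$, which is the claim.

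The substantive case is (iii). Given $\tw\notin\Omega$, $g=\mathring g$, and $\beta\in\Phi(G(\tw))$, I would first unwind the definition of $\Phi(G(\tw))$ to produce an affine root $\tilde\beta$ with finite part $\beta$ and $U_{\tilde\beta}\subseteq S(\tw)$; since $T$ normalises $U_{\tilde\beta}$ and $\ev_1$ sends the uniformiser to $1$, it restricts to a group isomorphism $TU_{\tilde\beta}\simeq TU_\beta$, which is exactly the isomorphism used to define $Y_{\tilde\beta}$. Then Lemma \ref{l:1-dim stab of G/Q} says the curve $Y_\beta\subseteq TU_\beta$ lies in the identity component of $\Stab_{TU_\beta}(\mathring g)$, hence (as $TU_\beta\subseteq G(\tw)$) inside $\Stab_{G(\tw)}(\mathring g)$. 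Transporting back along the isomorphism, $Y_{\tilde\beta}\subseteq\ev_1^{-1}\big(\Stab_{G(\tw)}(\mathring g)\big)=\Aut_{\cG'}(\cE)$; and since $Y_{\tilde\beta}\simeq Y_\beta\simeq\bA^1\setminus\{0\}$ is connected and passes through the identity (at parameter value $1$), it lies in $(\Aut_{\cG'}(\cE))^\circ$, as desired.

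Granting Lemmas \ref{l:Stab_T of G/Q}, \ref{l:1-dim stab of G/Q} and \ref{l:image group}, the proposition is essentially bookkeeping, and the only step where I expect to have to be careful is the affine-root matching in (iii): one must check that a finite root occurring in $G(\tw)$ genuinely comes from an affine root lying in $S(\tw)$ (this is built into the definition of $\Phi(G(\tw))$ via $\ev_1$, so it is automatic) and that the resulting $Y_{\tilde\beta}$ is a subscheme of the automorphism group rather than merely of some ambient $TU_{\tilde\beta}$. The genuine difficulty in this circle of ideas sits one level below, in the proof of Lemma \ref{l:1-dim stab of G/Q} — the explicit production of the one-parameter family $t\,u(t)$ of \eqref{eq:tame u(t)} fixing the generic point $\mathring g$ — which is the only place where a real computation rather than a formal manipulation is needed.
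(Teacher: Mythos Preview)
Your proposal is correct and follows essentially the same route as the paper: reduce everything via $\Aut_{\cG'}(\cE)\simeq\ev_1^{-1}(\Stab_{G(\tw)}(g))$ and the inclusion $T\subseteq G(\tw)$, then invoke Lemmas~\ref{l:Stab_T of G/Q}, \ref{l:1-dim stab of G/Q}, and \ref{l:image group} for the four cases. One small slip in your treatment of (iv): the hypothesis is $g\in\mathring{X}_1$, not $g=\mathring{g}$, so you should note (as the paper does) that any such $g$ lies in the $T$-orbit of $\mathring{g}$, whence $\Stab_T(g)=\Stab_T(\mathring{g})=\{1\}$ since $T$ is abelian.
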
 

\begin{proof}
Recall that for all $\tw\in \tW$, we have  $T\subseteq G(\tw)$. Thus, (i) and (ii) follows from Part (i) and (ii) of Lemma \ref{l:Stab_T of G/Q}, respectively. When $\tw\not\in\Omega$, Lemma \ref{l:image group} implies that there exists $\beta\in\Phi(G(\tw))$; thus, $TU_\beta\subseteq G(\tw)$. Therefore, (iii) follows from Lemma \ref{l:1-dim stab of G/Q}. When $\tw\in\Omega$, we have $\Stab(\tw)=I^-\cap I=T=G(\tw)$. Note that any $g\in\mathring{X}$ is in the same $T$-orbit as $\mathring{g}$. Thus, (iv) follows from Lemma \ref{l:Stab_T of G/Q}.(iii).  
\end{proof}

\subsubsection{Generic locus} 
As discussed in \S \ref{s:Kottwitz}, the Kottwitz homomorphism (which is the degree map here) defines an isomorphism $\pi_0(\Bun_{\cG'})\simeq \bZ$. Let $\tw_1$ denote the generator of $\Omega$ specified in \S \ref{s:Omega}. The above proposition immediately implies the following: 
\begin{cor}\label{c:tame genericLocus} For each $\alpha \in \bZ$, the generic locus $\mathring{\Bun_{\cG'}^\alpha}$ consists of one element, namely, the bundle labelled by $(\tw_1^\alpha, \mathring{g})$
\end{cor}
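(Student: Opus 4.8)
The plan is to read the statement off from the automorphism computation of Proposition \ref{p:tameAut}, together with the parameterisation in Lemma \ref{l:tame G'-bundles} and the identification of connected components via the Kottwitz homomorphism. Since the generic locus $\mathring{\Bun_{\cG'}^\alpha}$ is by definition the open substack of bundles of minimal automorphism dimension, it suffices to single out which labels $(\tw, G(\tw)g)$ yield a $\cG'$-bundle with finite (equivalently, trivial) automorphism group, and then to sort these by connected component.

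First I would note, from Proposition \ref{p:tameAut}.(iv), that the bundle labelled $(\tw_1^\alpha, \mathring{g})$ has trivial automorphism group, so the minimal automorphism dimension equals $0$ and is attained. Next I would show no other label attains it. Given a label $(\tw, G(\tw)g)$, represent the orbit by a point $g$ lying in a Bruhat cell $X_i$ of $G/Q$. If $i \geq 2$, then $T_1 \subseteq (\Aut_{\cG'}(\cE))^\circ$ by part (i); if $i = 1$ and $g \notin \mathring{X}_1$, then $g \in X_{1j}$ for some $j$ and $T_j \subseteq (\Aut_{\cG'}(\cE))^\circ$ by part (ii). In the remaining case $g \in \mathring{X}_1 = T\mathring{g}$, since $T \subseteq G(\tw)$ the orbit $G(\tw)g$ coincides with $G(\tw)\mathring{g}$, so we may take $g = \mathring{g}$; if moreover $\tw \notin \Omega$, then $\Phi(G(\tw)) \neq \varnothing$ by Lemma \ref{l:image group} and part (iii) gives a one-dimensional subscheme $Y_{\tilde\beta} \subseteq (\Aut_{\cG'}(\cE))^\circ$. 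Hence every label except $(\tw, \mathring{g})$ with $\tw \in \Omega$ has positive-dimensional automorphisms; and for $\tw \in \Omega$ one has $G(\tw) = T$, which acts transitively on $\mathring{X}_1 = T\mathring{g}$, so there is exactly one such label for each $\tw \in \Omega$. This identifies the generic locus of $\Bun_{\cG'}$ with $\{(\tw, \mathring{g}) : \tw \in \Omega\}$.

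Finally I would match this set with the components. By \eqref{eq:Omega}, $\Omega \simeq \bZ$ with $\tw_1 \mapsto 1$, and the Kottwitz homomorphism factors through the forgetful map $\pi\colon \Bun_{\cG'} \to \Bun(I^\opp, I)$ (whose fibres $G/Q$ are connected), on which it sends the bundle labelled $\tw$ to the class of $\tw$ in $\tW/W^\aff \simeq \Omega \simeq \bZ$; hence the bundle $(\tw_1^\alpha, \mathring{g})$ lies in $\Bun_{\cG'}^\alpha$, and each component contains exactly one generic bundle, as claimed. I do not expect a real obstacle here: the corollary is bookkeeping on top of Proposition \ref{p:tameAut}, and the only mildly delicate points are reducing a generic $g \in \mathring{X}_1$ to $\mathring{g}$ via the $T$-action and correctly matching the $\Omega$-grading with $\pi_0(\Bun_{\cG'})$.
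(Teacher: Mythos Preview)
Your proposal is correct and follows essentially the same approach as the paper: the corollary is stated there as an immediate consequence of Proposition \ref{p:tameAut}, and you have simply spelled out the case analysis (parts (i)--(iii) force positive-dimensional automorphisms unless $\tw\in\Omega$ and $g\in\mathring{X}_1$, while part (iv) gives triviality in that case) together with the matching of $\Omega$ with $\pi_0(\Bun_{\cG'})$ via the Kottwitz map. The only detail you add beyond the paper's one-line justification is the observation that $G(\tw)=T$ acts transitively on $\mathring{X}_1$, collapsing the remaining labels to a single orbit per $\tw\in\Omega$; this is implicit in the paper's formulation of Lemma \ref{l:tame G'-bundles} in terms of $G(\tw)$-orbits.
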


\subsection{Rigidity} 
The following is a more precise version of Theorem \ref{t:rigidity} in the tame setting. 
\begin{thm}\label{t:rigidTame} For each $\alpha\in \bZ$, the only relevant element on $\Bun_{\cG'}^\alpha$ is the generic element, i.e., the bundle labelled by $(\tw_1^\alpha, \mathring{g})$. 
\end{thm}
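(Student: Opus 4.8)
The plan is to combine the classification of $\cG'$-bundles (Lemma~\ref{l:tame G'-bundles}) with the automorphism computations of Proposition~\ref{p:tameAut} to rule out every bundle except the generic one. Recall a bundle $\cE$ corresponds to a pair $(\tw, G(\tw)g)$ with $\tw\in\tW$ and $g\in G/Q$, and that $\cE$ is relevant iff the pullback of $\gamma_S$ to $(\Aut_{\cG'}(\cE))^\circ$ is the constant sheaf. The strategy is: for each non-generic bundle, exhibit an explicit one-dimensional torus or $Y_{\tilde\beta}$ inside $(\Aut_{\cG'}(\cE))^\circ$ (this is exactly what Proposition~\ref{p:tameAut}(i)--(iii) provides) and then show that the restriction of $\gamma_S=\cL_{\overline\chi}\boxtimes\bQl\boxtimes\cL_\rho$ to that subgroup is non-trivial, using the disjointness hypothesis $\chi_i\neq\rho_j$.

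First I would handle the case $\tw\notin\Omega$. By Corollary~\ref{c:tame genericLocus} the generic locus of each component lies over $\Omega$, so any bundle with $\tw\notin\Omega$ is non-generic and we must show it is irrelevant. If $g\notin\mathring X_1$ (i.e.\ $g\in X_i$ for $i\geq 2$ or $g\in X_{1j}$), Proposition~\ref{p:tameAut}(i)--(ii) puts a coordinate torus $T_j$ inside $(\Aut_{\cG'}(\cE))^\circ$; I would trace through how $T_j\subset T\simeq I^\opp/I^\opp(1)$ and $T\simeq I/I(1)$ sit inside $\Aut_{\cG'}(\cE)$ via the residue maps at $0$ and $\infty$, and observe that the character $\gamma_S$ restricts to $T_j$ as $\overline\chi_j\cdot\rho_j$ (up to a sign/inversion bookkeeping coming from the conventions in \eqref{eq:tame u(t)} and the identifications of $T$ at the two points). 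Since $\chi_j\neq\rho_j$ by disjointness, this character is non-trivial, hence $\cE$ is irrelevant. If instead $g\in\mathring X_1$ but $\tw\notin\Omega$, Proposition~\ref{p:tameAut}(iii) gives $Y_{\tilde\beta}\subseteq(\Aut_{\cG'}(\cE))^\circ$ for some $\beta\in\Phi(G(\tw))$; here $Y_{\tilde\beta}\cong\bA^1-\{0\}$, and I would compute the composite $Y_{\tilde\beta}\hookrightarrow\Aut_{\cG'}(\cE)\xrightarrow{\on{Res}} L_S\xrightarrow{\gamma_S}\bQl^\times$. The point is that $Y_{\tilde\beta}$ maps to a coordinate torus $T_q$ in the residue at $0$ (and/or $\infty$) via its $T$-component $t=\diag(1,\dots,t_q,\dots,1)$, while its unipotent part is killed by $\on{Res}$; so again the restriction of $\gamma_S$ is $\overline\chi_q$ or $\rho_q$ or their product — a non-trivial character by disjointness (or simply because an individual Kummer sheaf $\cL_{\chi_q}$ is non-trivial when $\chi_q\neq 1$, and one checks the relevant $\chi_q$ or $\rho_q$ is forced to be non-trivial, else $G(\tw)$ would have to be $T$, contradicting $\tw\notin\Omega$ via Lemma~\ref{l:image group}).

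Next I would treat $\tw\in\Omega$. By Lemma~\ref{l:image group}, $G(\tw)=T$, so $\Aut_{\cG'}(\cE)\simeq\ev_1^{-1}(\Stab_T(g))$. If $g=\mathring g\in\mathring X_1$, Proposition~\ref{p:tameAut}(iv) says $\Aut_{\cG'}(\cE)$ is trivial, so $\cE$ is the generic bundle and is automatically relevant — this is the unique relevant element we want. If $g\notin\mathring X_1$, then $\Stab_T(g)$ is a non-trivial (positive-dimensional) subtorus; more precisely by Lemma~\ref{l:Stab_T of G/Q}(i)--(ii) it contains some $T_j$, and I would again restrict $\gamma_S$ to $T_j$ and invoke $\chi_j\neq\rho_j$ to conclude irrelevance, exactly as in the non-generic $\tw\notin\Omega$ case. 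Combining these cases: on each component $\Bun_{\cG'}^\alpha$ the bundle $(\tw_1^\alpha,\mathring g)$ is relevant and every other bundle is irrelevant, which is the claim.

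The main obstacle I anticipate is the bookkeeping in the last clause of the second paragraph — namely, verifying that the one-dimensional subgroup $Y_{\tilde\beta}$ furnished by Proposition~\ref{p:tameAut}(iii), when $\tw\notin\Omega$ and $g=\mathring g$, really does map to a \emph{non-trivial} character under $\gamma_S$. The subtlety is that the relevant coordinate $q$ is determined by $\beta=\alpha_{pq}\in\Phi(G(\tw))$, which in turn depends on which affine simple root $\tw$ sends to a negative root; I would need to check that for every such $\tw\notin\Omega$ the resulting index $q$ lands on a coordinate where $\chi_q\neq\rho_q$ (equivalently, that the product $\overline\chi_q\rho_q$ that $\gamma_S$ sees is non-trivial). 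This should follow from disjointness applied coordinate-by-coordinate, but it requires carefully matching the torus-component conventions in \eqref{eq:tame u(t)} — in particular the sign difference between the $p\neq 1\neq q$ case and the others — with the identifications $I^\opp/I^\opp(1)\simeq T$ and $I/I(1)\simeq T$ used to define $\cL_{\overline\chi}$ and $\cL_\rho$. I expect this to be a finite, if slightly fiddly, computation modelled closely on \cite[Theorem 2.4.2]{YunGalois}.
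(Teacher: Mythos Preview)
Your approach is the paper's approach, and the case analysis via Proposition~\ref{p:tameAut} is exactly right. The gap is a computational one: you have not correctly identified the character that $\gamma_S$ restricts to, and this is what generates the spurious ``obstacle'' in your final paragraph.

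The point you are missing is the explicit form of $\on{Res}$ in~\eqref{eq:autoMap}: under the identification $\Aut_{\cG'}(\cE)\subseteq S(\tw)=I^-\cap\tw I\tw^{-1}$, one has $\on{Res}(h)=(h,\tw^{-1}h\tw)$. Thus for $t=\diag(1,\dots,t_j,\dots,1)\in T_j$ the trace of $\on{Res}^*\gamma_S$ is
\[
\overline{\chi}(t)\cdot\rho(\tw^{-1}t\tw)=\chi_j^{-1}(t_j)\,\rho_l(t_j),
\]
where $l$ is determined by $\tw^{-1}T_j\tw=T_l$. It is \emph{not} $\overline{\chi}_j\rho_j$; the index on $\rho$ depends on $\tw$. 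Since the disjointness hypothesis is $\chi_i\neq\rho_j$ for \emph{all} pairs $(i,j)$, one gets $\chi_j\neq\rho_l$ regardless of what $l$ turns out to be, and the character is nontrivial. This handles cases (i), (ii), and (iii) uniformly: in case~(iii) the unipotent factor $u(t)$ of $tu(t)\in Y_{\tilde\beta}$ is not ``killed by $\on{Res}$'' but rather lies in $I^\opp(1)$ (and its $\tw^{-1}$-conjugate in $I(1)$), on which $\overline{\chi}$ and $\rho$ are trivial; so the same formula $\chi_j^{-1}\rho_l$ results.

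Your alternative reasoning in case~(iii) --- that an individual $\chi_q$ or $\rho_q$ must be nontrivial, or that this would follow from $G(\tw)\neq T$ --- is wrong and should be discarded: the characters are allowed to be trivial individually, and the root structure of $G(\tw)$ has nothing to do with the values of $\chi$ or $\rho$. With the correct $\on{Res}$ computation there is no index-matching to worry about; disjointness for all pairs does the work immediately.
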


\subsubsection{} 
As a first step in proving this theorem, let us explain what being relevant in this context means. Let $\cE$ be a bundle associated to the pair $(\tw, g)\in \tW\times G/Q$. 
If we make the identification 
\[
\Aut_{\cG'}(\cE)=\ev_1^{-1}(G(\tw)\cap gQg^{-1})\subseteq S(\tw)=I^-\cap\tw I\tw^{-1},
\]
then the map $\mathrm{Res}$, defined in \eqref{eq:Res}, is given by 
\begin{equation}\label{eq:autoMap} 
\on{Res}:\ \Aut_{\cG'}(\cE) \ra  \Aut(\cE|_{\cO_0}) \times \Aut(\cE_{\cO|_\infty})  \simeq I^\opp \times I,\qquad h\mapsto (h,\tw^{-1}h\tw). 
\end{equation}  
By definition, $\cE$ is relevant if the pullback of the local system $\gamma_S:=\cL_{\overline{\chi}}\boxtimes\cL_\rho$ to $(\Aut_{\cG'}(\cE))^0$ is constant.

\subsubsection{} 
If $\tw\in \Omega$ and $g\in \mathring{X_1}$ (equivalently, $g$ is in the $T$-orbit of $\mathring{g}$), then $\cE$ has a trivial automorphism group and is therefore relevant. We now show that if $\tw\notin \Omega$ or $g\notin \mathring{X}_1$, then the pullback of $\gamma_S$ to the one-dimensional subscheme of $(\Aut_{\cG'}(\cE))^\circ$ given in the Proposition \ref{p:tameAut} is non-constant. We prove this by showing that the Frobenius trace function of  $\Res^*(\cL_{\overline{\chi}}\boxtimes\cL_\rho)$ over this subscheme is non-constant, i.e. not identically equal to $1$.

\subsubsection{} 
Let $\tw\in\tW$, $g\in X_i$, $2\leq i\leq n$, and $t=(t_1,1,...,1)\in T_1$. By Proposition \ref{p:tameAut}.(i), $T_1\subseteq \Aut_{\cG'}(\cE)$. The Frobenius trace of $\Res^*(\cL_{\overline{\chi}}\boxtimes\cL_\rho)$ over $T_1$ is
\[
\overline{\chi}(t)\rho(\tw^{-1}t\tw)=\chi_1^{-1}(t_1)\rho_l(t_1),
\]
where $l$ is defined by the equality $\tw^{-1}T_1\tw=T_l$. If above expression equals $1$ for all $t_1\in k^\times$, then $\chi_1=\rho_l$, which contradicts with the assumption that $\chi_i$'s and $\rho_j$'s are disjoint. 
Thus, the Frobenius trace function is non-constant.

\subsubsection{} 
Let $\tw\in\tW$, $g\in X_{1j}\subseteq X_1-\mathring{X}_1$, $2\leq j\leq n$, and $t=(1,...,1,t_j,1,...,1)\in T_j$. Analogous to the above, the Frobenius trace of the restriction of $\gamma_S$ to $T_j\subseteq \Aut_{\cG'}(\cE)$ is given by the function $\chi_j^{-1}(t_j)\rho_l(t_j)$. The assumption $\chi_j\neq \rho_l$  implies that the trace function is non-constant.

\subsubsection{}  
Let $\tw \in \tW-\Omega$ and $g\in \mathring{X}_1$. Without the loss of generality, we can take $g=\mathring{g}$. Let $\beta=\alpha_{ij}\in\Phi(G(\tw))$, $t=(1,...,1,t_j,1,...,1)\in T_j$, and $u(t)\in U_{\tilde{\beta}}$ be the preimage of the element given in \eqref{eq:tame u(t)} under isomorphism $\ev_1:TU_{\tilde{\beta}}\simeq TU_\beta$. Then $tu(t)\in Y_{\tilde{\beta}}$. The Frobenius trace of the restriction of $\gamma_S$ to $Y_{\tilde{\beta}}\subseteq \Aut_{\cG'}(\cE)$ is given by
\[
\overline{\chi}(tu(t))\rho(\tw^{-1}tu(t)\tw)=\overline{\chi}(t)\rho(\tw^{-1}t\tw)=\chi_j^{-1}(t_j)\rho_l(t_j)
\]
Again, the assumption $\chi_j\neq \rho_l$ implies that the trace function is non-constant. This concludes the proof of Theorem \ref{t:rigidTame} \qedhere.

\section{Hecke eigenvalue in the tame case}\label{s:tame eigenvalue} 
The goal of this section is to prove Theorem \ref{t:eigenvalue} in the tame case. We start by giving a description of $\Bun_{\cG}$ in terms of lattices (cf. \cite[\S 3]{HNY}).

\subsection{Alternative description of $\Bun_{\cG}$} 
Let $\Bun^+$ be the classifying stack of 
$(\cE,F^*\cE,\{v^i\}, F_*\cE,\{v_i\}, V_{n-1}, v)$, where:
\begin{itemize}
\item $\cE$ is a vector bundle of rank n on $\bP^1$;
\item $\cE=F^0\cE\supset F^1\cE\supset\cdots\supset F^n\cE=\cE(-\{0\})$ is a decreasing filtration $F^*\cE$ giving a complete flag of the fibre of $\cE$ at $0$;
\item $v^i\in F^{i-1}\cE/F^i\cE$ is a nonzero vector, $1\leq i\leq n$;
\item $\cE(-\{\infty\})=F_0\cE\subset F_1\cE\subset\cdots\subset F_n\cE=\cE$ is an increasing filtration $F_*\cE$ giving a complete flag of the fibre of $\cE$ at $\infty$;
\item $v_i\in F_i\cE/F_{i-1}\cE$ is a nonzero vector, $1\leq i\leq n$;
\item $\cE(-\{1\})\subset V_{n-1}\subset \cE$ is an increasing filtration giving a partial flag of mirabolic type at $1$, i.e., $V_{n-1}/\cE(-\{1\})$ is an $(n-1)$-dimensional subspace of $\cE/\cE(-\{1\})$;
\item $v\in\cE/V_{n-1}$ is a nonzero vector.
\end{itemize}
Above data corresponds to level structures $I^\opp(0)$, $I(\infty)$, and $\mathtt{Q}$. Thus, we obtain an isomorphism of stacks
$\Bun_\cG\simeq \Bun^+$.

\subsubsection{} 
Choosing a trivialisation of $\cE$ over $\bP^1-\{0,\infty\}$, we can rewrite the above moduli problem in terms of lattices. 
Let $\Lambda$ be the free $k[t,t^{-1}]$-module with basis ${e_1,e_2,...,e_n}$. Let $e_{i+jn}:=t^j e_i$ for $j\in\bZ$, $1\leq i\leq n$. Then $\{e_i\}_{i\in\bZ}$ is a $k$-basis of $\Lambda$. Let $R$ be a $k$-algebra. An $R[t]$-lattice in $R\otimes_k\Lambda$ is a $R[t]$-submodule $\Lambda'\subset R\otimes_k\Lambda$ such that there exists a positive integer $M$ satisfying 
\[
\text{Span}_R\{e_i|i>M\}\subset\Lambda'\subset\text{Span}_R\{e_i|i\geq-M\},
\]
and both $\Lambda'/\text{Span}_R\{e_i|i>M\}$ and $\text{Span}_R\{e_i|i\geq-M\}/\Lambda'$ are projective $R$-modules.

\subsubsection{}
Let $\widetilde{\Bun}^+$ be the stack whose $R$-points classify the data $(\Lambda^*,\{v^i\},\Lambda_*,\{v_i\},V_{n-1},v)$, where:
\begin{itemize}
\item $R\otimes_k\Lambda\supset \Lambda^0\supset \Lambda^1\supset\cdots\supset\Lambda^n=t\Lambda^0$ is a chain $\Lambda^*$ of $R[t]$-lattices such that $\Lambda^i/\Lambda^{i+1}$ is a rank one projective rank $R$-module;
\item $v^i\in\Lambda^{i-1}/\Lambda^i$, $1\leq i\leq n$ is an $R$-basis;
\item $\Lambda_0=t^{-1}\Lambda_n\subset \Lambda_1\subset\cdots\subset\Lambda_n\subset R\otimes\Lambda$ is a chain $\Lambda_*$ of $R[t^{-1}]$-lattices such that $\Lambda_i/\Lambda_{i-1}$ is rank one projective $R$-module, $1\leq i\leq n$; 
\item $v_i\in\Lambda_i/\Lambda_{i-1}$, $1\leq i\leq n$, is a an $R$-basis;
\item $(t-1)R\otimes \Lambda \subset V_{n-1}\subset R\otimes \Lambda$ a $R\otimes \Lambda$-submodule such that 
\[
V_{n-1}/(t-1)R \otimes \Lambda\subset R\otimes \Lambda/(t-1)R\otimes \Lambda
\]
is a projective rank $n-1$ $R\otimes \Lambda$-submodule;
\item $v\in R\otimes \Lambda/V_{n-1}$ a $R$-basis. 
\end{itemize}

\subsubsection{} 
The group $G(k[t,t^{-1}])$ acts on $\Lambda$, and therefore, also on $\widetilde{\Bun}^+$, giving an isomorphism
\[
\Bun^+\simeq \widetilde{\Bun}^+/G(k[t,t^{-1}]).
\]
Henceforth, we  regard $\Bun_\cG$ as the moduli of $G(k[t,t^{-1}])$-orbits of chains of lattices and vectors.

\subsubsection{}
The degree of a vector bundle can be calculated in terms of lattices as follows:
\[
\deg(\Lambda^*,\Lambda_*):=\chi_R(\iota:\Lambda^0\oplus\Lambda_0\rightarrow R\otimes\Lambda)=\text{rk}_R\ker(\iota)-\text{rk}_R\text{coker}(\iota).
\]
For each $\alpha \in \bZ$, let $\Bun^{+,\alpha}$ be the substack classifying degree $\alpha$ lattices. Then $\Bun^{+,\alpha}$'s are the components of $\Bun^+$. We now explicitly describe the open embedding of the generic locus 
\[
j_\alpha: \cO_\alpha = \mathring{\Bun^{+,\alpha}}\simeq T_0\times T_\infty \hookrightarrow \Bun^{+, \alpha}.
\]

\subsubsection{The generic locus} 
Recall that $\Lambda$ is a free $k[t,t^{-1}]$ module with basis $e_1,...,e_n$. 
Let $\star\in\Bun^{+,0}$ be the  $G(R[t,t^{-1}])$-orbit of the data $(\Lambda^*(\star),\{v^i(\star)\},\Lambda_*(\star),\{v_i(\star)\},V_{n-1}(\star), v(\star))$ where
\begin{itemize}
\item $\Lambda^i(\star)=\langle  e_{i+1}, e_{i+2}, ... \rangle \subset R\otimes \Lambda$;
\item $v^i(\star)=e_i$;
\item $\Lambda_i(\star)=\langle  ..., e_{i-1}, e_i \rangle \subset R\otimes \Lambda$; 
\item $v_i(\star)=e_i$;
\item $V_{n-1}(\star)=\langle e_1,...,e_{n-1} \rangle+(t-1)R\otimes \Lambda \subset R\otimes \Lambda$;
\item $v(\star)=e_n$.
\end{itemize}
The map $j_\alpha$ is given by 
\begin{equation} 
j_\alpha (a,b):= (\Lambda^*(\star), \{a v^i(\star)\}, \tw_1^\alpha\cdot\Lambda_*(\star), \{ \tw_1^\alpha b \cdot v_i(\star)\}, uw\cdot V_{n-1}(\star), \mathring{g}\cdot v(\star)), 
\end{equation} 
where $(a,b)\in T_0\times T_\infty$. For future use, let $\star_1:=j_1(1,1)\in \Bun^{+,1}$.

\subsection{Relevant part of the Hecke stack}
Our goal here is to explicitly describe the correspondence in \eqref{eq:relevantHecke}. 

\subsubsection{}
We want to compute $\mathring{\mathrm{GR}_{\omega_1}}$ defined in the diagram \eqref{eq:relevantHecke}. Its $R$-points are $R[t,t^{-1}]$-morphisms $M: R\otimes \Lambda \rightarrow R\otimes\Lambda$ such that $M$ is an isomorphism at all but one point $x\in\bP^1-\{0,1,\infty\}$. At $x$, $M$ gives upper modification associated to $\omega_1$. In addition, $M$ needs to satisfy $M(j_0(a,b))=\star_1$ for some $a\in T_0$ and $b\in T_\infty$. We now write down the matrix for such $M$ explicitly.

\subsubsection{}
First, we have $M(\Lambda^i(\star),\Lambda_i(\star))=(\Lambda^i(\star_1),\Lambda_i(\star_1))$. Thus, with respect to the basis ${e_1,...,e_n}$, any such $M$ takes the form
\[
M=\begin{pmatrix}
x_1 &   &      &       &ty_n\\
y_1 &x_2&      &       &    \\
&y_2&      &       &    \\
&   &\ddots&\ddots &    \\
&   &      &y_{n-1}&x_n  
\end{pmatrix}. 
\]

\subsubsection{}
Second, $M$ maps $v^i$, $v_i$ of $j_0(a,b)$ to the corresponding data for $\star_1$. If we let $a=\diag(a_1,...,a_n)\in T_0$ and $b=\diag(b_1,...,b_n)\in T_\infty$, then we obtain 
\[
M(ae_i)=M(a_ie_i)=e_i, \qquad M(be_i)=M(b_ie_i)=\tw_1 e_i=e_{i+1}.
\]
In view of the matrix form of $M$, these equations amount to 
\[
a_i=x_{i}^{-1},\qquad b_i=y_i^{-1}.
\]

\subsubsection{}
Third, $M$ maps the data $V_{n-1}$ and $v$ of $j_0(a,b)$ to that of $\star_1$, resulting in the equations 
\[
M\mathring{g}\langle e_1,...,e_{n-1}\rangle=\mathring{g}\langle e_1,...,e_{n-1}\rangle, \qquad M\mathring{g}e_n=\mathring{g}e_n. 
\]
Using the explicit matrices of $M$ and $\mathring{g}$ , the above equalities amount to
\begin{equation}
x_1-y_1=x_n-y_n=x_i+y_i=1, \quad 2\leq i\leq n-1.
\end{equation}

\subsubsection{} 
Finally, $\det M=x_1\cdots x_n-(-1)^nty_1\cdots y_n$ vanishes for exactly one $t\in\bP^1-\{0,1,\infty\}$. If
we replace $x_i$ with $-x_i$ for $1\leq i\leq n$, and replace $y_1,y_n$ with $-y_1,-y_n$, then $\det M$ vanishes at $\pi_2(M)=\prod_{i=1}^n\frac{x_i}{y_i}$. Under this substitution, $x_i=y_i-1$, $1\leq i\leq n$. Putting all this together, we obtain the description of the Hecke stack given in \S \ref{sss:HeckeTame}.

\subsection{Proof of Theorem \ref{t:eigenvalue} in the tame case} 
According to \eqref{eq:eigenvalue}, the Hecke eigenvalue is given by 
\[
E_{\on{Std}}=\pi_{2!}\pi_1^*(\cL_{\overline{\chi}}\boxtimes\cL_\rho)[n-1].
\]
Our goal now is to show that $E_{\on{Std}}$ is geometrically isomorphic to the tame hypergeometric sheaf $\sH=\sH(\psi, \chi_1,...,\chi_n, \rho_1,...,\rho_n)$. 

\subsubsection{} Since both $\sH$ and $E_{\on{Std}}$ are local systems on $\bGm$ and $\sH$ is irreducible, it is sufficient to show that the Frobenius trace functions are equal up to a nonzero scalar. Now for $a\in\bP^1-\{0,1,\infty\}$, we have 
\begin{align*}
\mathrm{tr}_{E_{\on{Std}}}(a)&= \mathrm{tr}( \pi_{2!}\pi_1^*(\cL_{\ochi}\boxtimes\cL_{\rho})[n-1])(a)\\
&=(-1)^{n-1} \sum_{\prod_{i=1}^n(y_i-1)/y_i=a}\, \, \prod_{i=1}^n\chi_i(1-y_i)\rho_i(y_i^{-1})\rho_1(-1)\rho_n(-1)\\
&=(-1)^{n-1} \sum_{\prod_{i=1}^n(1-y_i^{-1})=a}\, \, \prod_{i=1}^n\chi_i(1-y_i^{-1})\chi_i^{-1}(-y_i^{-1})\rho_i(-y_i^{-1})\prod_{i=2}^{n-1}\rho_i(-1)\\
&=\sum_{\prod_{i=1}^n(1-y_i)=a}\varepsilon\prod_{i=1}^n\chi_i(1-y_i)(\chi_i^{-1}\rho_i)(-y_i)\quad\quad (y_i\mapsto y_i^{-1}, \varepsilon:=(-1)^{n-1} \prod_{i=2}^{n-1}\rho_i(-1))\\
&=\sum_{\prod_{i=1}^n z_i=a,z_i\neq 0,1}\varepsilon\prod_{i=1}^n\chi_i(z_i)(\chi_i^{-1}\rho_i)(z_i-1) \quad\quad (z_i=1-y_i)
\end{align*}

\subsubsection{} 
On other hand, recall from \S \ref{sss:hyp sum} that the Frobenius trace of $\sH$ is given by 
\begin{align*}
\mathrm{tr}_\sH(a)&=(-1)^{n+n-1}\sum_{\prod_{i=1}^n x_i=a\prod_{i=1}^n y_i}\psi(\sum_{i=1}^n(x_i-y_i))\prod_{i=1}^n\chi_i(x_i)\rho_i(y_i^{-1})\\
&=-\sum_{\prod_{i=1}^n z_i=a,y_i\neq0}\psi(\sum_{i=1}^n y_i(z_i-1))\prod_{i=1}^n\chi_i(z_i)\chi_i(y_i)\rho_i(y_i^{-1})\quad\quad (z_i:=x_i/y_i)\\
&=-\sum_{\prod_{i=1}^n z_i=a,z_i\neq 0,1, y_i\neq0}\psi(\sum_{i=1}^n y_i(z_i-1))\prod_{i=1}^n\chi_i(z_i)(\chi_i^{-1} \rho_i)(y_i^{-1}).
\end{align*}

\subsubsection{} 
The last step follows because the assumption $\chi_i\neq \rho_i$ implies that $\sum_{y_i\neq 0} \chi_i^{-1} \rho_i(y_i^{-1})=0$. 
We therefore obtain 
\begin{align*}
\mathrm{tr}_\sH(a)&=-\sum_{\prod_{i=1}^n z_i=a,z_i\neq 0,1, y_i\neq0}\psi(\sum_{i=1}^n y_i(z_i-1))\prod_{i=1}^n\chi_i(z_i)(\chi_i^{-1} \rho_i)(y_i^{-1})\\
&=-\sum_{\prod_{i=1}^n z_i=a,z_i\neq 0,1, w_i\neq 0}\psi(\sum_{i=1}^n w_i)\prod_{i=1}^n(\chi_i\rho_i^{-1})(w_i)\chi_i(z_i)(\chi_i^{-1}\rho_i)(z_i-1)\quad\quad(w_i=y_i(z_i-1))\\
&=-\sum_{\prod_{i=1}^n z_i=a,z_i\neq 0,1}\prod_{i=1}^n(\sum_{w_i\neq 0}\psi( w_i)(\chi_i\rho_i^{-1})(w_i))\prod_{i=1}^n\chi_i(z_i)(\chi_i^{-1} \rho_i)(z_i-1)\\
&=-(\prod_{i=1}^n G(\psi,\chi_i\rho_i^{-1}))\sum_{\prod_{i=1}^n z_i=a,z_i\neq 0,1}\prod_{i=1}^n\chi_i(z_i)(\chi_i^{-1} \rho_i)(z_i-1)\\
&=-(\prod_{i=1}^n G(\psi,\chi_i\rho_i^{-1})) \varepsilon^{-1} \cdot\mathrm{tr}_{E_{\on{Std}}}(a),
\end{align*}
where  $G(\psi,\chi_i\rho_i^{-1})=\sum_{w_i\neq 0}\psi( w_i)(\chi_i\rho_i^{-1})(w_i)\neq 0$ is a Gauss sum. This concludes the proof of Theorem \ref{t:eigenvalue} in the tame case. \qed

\subsubsection{} We note that one can reformulate above proof in a purely sheaf-theoretic language (at the cost of notational inconveniences). Thus, the result also holds in characteristic $0$.

\section{Rigidity in the wild case}\label{s:wildRigidity}
The goal of this section is to prove Theorem \ref{t:rigidity} in the wild case.  We start by parameterising the objects of $\Bun_{\cG'}$ and describing their automorphisms.

\subsection{Parameterisation of $\cG'$-bundles}\label{s:wild bundles} Recall that the level structure at $\infty$ is given in terms of the parahoric $P$ associated to the barycentre $\check{\rho}/d$. Let $I'$ be an Iwahori contained in $P$. Then, 
we have a decomposition 
\[
\Bun_{\cG'}(k) = I^-\backslash G(k(\!(s)\!))/J= 
\bigcup_{\tw \in \widetilde{W}} I^-\backslash I^- \tw I'/J = 
\bigcup_{\tw\in \widetilde{W}} I^-\backslash I^-  \tw  P/J=
\bigcup_{\tw\in \widetilde{W}, \ell\in L} I^-\backslash I^-  \tw\ell J/J, 
\]
The first equality follows because every $\cG'$-bundle on $\bP^1-\{\infty\}$ is trivialisable (cf. \S \ref{s:auxiliary}), 
the second from the Birkhoff decomposition, the third from the fact that $I'\subseteq P$, and the fourth from $P=LP(1)= LJ$. 

\subsubsection{} 
The previous paragraph implies that to every pair $(\tw, \ell) \in \tW\times L$, we can associate an element $\cE\in \Bun_{\cG'}(k)$. Moreover, if we let $x=\tw \ell$, then
\[
\Aut_{\cG'}(\cE) \simeq \Stab_{I^-}(x J) = J\cap x^{-1}I^- x.
\]
Clearly, there can be two different elements of $\tW\times L$ mapping to the same element of $\Bun_{\cG'}(k)$. 
Our next goal is to formulate a refinement of the above parametrisation using a convenient subset of $\tW\times L$. To this end, we introduce an auxiliary subgroup $\Upsilon\subseteq L$.

\subsubsection{The subgroup $\Upsilon$} 
Let $B_L$ be the  Borel subgroup of $L$ and $U_L$ its unipotent radical. Recall that $J=B_\phi P(1)$, where $B_\phi\subseteq B_L$ is a Borel of $L_\phi\subseteq L$. Let $T_\phi$ and $U_\phi$ be, respectively, the maximal torus and unipotent radical of $B_\phi$. We can see from \eqref{eq:Borel of Stab} that $U_\phi$ is generated by root subgroups. We can therefore define a complement of $U_\phi$ in $U_L$ as follows. 
Let $\Upsilon\subseteq U_L$ be the subscheme defined by 
\begin{equation} 
\Upsilon:=\prod_{\alpha\in \Phi(U_L)- \Phi(U_\phi)} U_{\alpha}. 
\end{equation} 
Explicitly, we have: 
\begin{align*}
\Phi(\Upsilon)&=\{\alpha_{ij}|\ 1\leq i\leq d,\ i+d\leq j\leq n,\  j\equiv i\!\!\!\mod d   \};   \\
\Phi(U_\phi)&=\{\alpha_{ij}|\ d+1\leq i\leq n,\ i+d\leq j\leq n,\  j\equiv i\!\!\!\mod d   \}. 
\end{align*}
From the above discussions, one readily verifies the following:

\begin{lem} \label{l:U_phi} 
The scheme $\Upsilon$ is a commutative normal subgroup of $U_L$ and $U_L=U_\phi \ltimes \Upsilon$. Moreover, given $\beta=\alpha_{ij} \in \Phi(U_\phi)$, there exists a unique root $\alpha\in \Phi(\Upsilon)$ such that $\alpha+\beta\in \Phi(G)$. Explicitly,  
$\alpha = \alpha_{pi}$, where $p\in \{1,2,..,d\}$ is determined by the requirement $p\equiv i\!\mod d$. 
In this case, $\alpha+\beta=\alpha_{pj} \in \Phi(G)$. 
\end{lem}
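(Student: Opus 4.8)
The plan is to reduce the whole statement to elementary combinatorics of the partition of $\{1,\dots,n\}$ into residue classes modulo $d$ underlying the principal grading attached to $\check{\rho}/d$. The one observation that drives everything is that every root $\alpha_{kl}\in\Phi(U_L)$ satisfies $l\ge k+d$, and in particular $l>d$: indeed $k<l$ together with $l\equiv k\bmod d$ force $l-k$ to be a positive multiple of $d$. Two immediate consequences: $\Phi(U_L)=\Phi(U_\phi)\sqcup\Phi(\Upsilon)$ is a disjoint union (the roots in $\Phi(U_\phi)$ have first index $\ge d+1$, those in $\Phi(\Upsilon)$ have first index $\le d$), and no root in $\Phi(U_L)$ has the form $\alpha_{*,i}$ with $i\le d$.

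First I would check that $\Upsilon$ is a commutative subgroup: if $\alpha_{ij},\alpha_{kl}\in\Phi(\Upsilon)$ then $i,k\le d$ while $j\ge i+d>d$ and $l\ge k+d>d$, so neither $j=k$ nor $l=i$ can occur, hence $\alpha_{ij}+\alpha_{kl}\notin\Phi(G)$; thus $\Phi(\Upsilon)$ is a closed set of roots on which no addition ever produces a root, so $\prod_{\alpha\in\Phi(\Upsilon)}U_\alpha$ is a subgroup independent of the order of the product and, by the Chevalley commutator relations, abelian. For normality of $\Upsilon$ in $U_L$ I would use the commutator formula again: it suffices that whenever $\alpha\in\Phi(\Upsilon)$, $\beta\in\Phi(U_L)$ and $\alpha+\beta\in\Phi(G)$, one has $\alpha+\beta\in\Phi(\Upsilon)$. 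Writing $\alpha=\alpha_{ij}$ with $i\le d$ and $\beta=\alpha_{kl}$, the sum is a root only if $j=k$ or $l=i$; the latter is excluded because $l>d\ge i$, and in the former case $\alpha+\beta=\alpha_{il}$ has first index $i\le d$ and last index $l\equiv j\equiv i\bmod d$ with $l>j\ge i+d$, so $\alpha_{il}\in\Phi(\Upsilon)$. Since moreover $\Phi(U_\phi)$ is closed (being the root set of the unipotent group $U_\phi$) and $\Phi(U_L)=\Phi(U_\phi)\sqcup\Phi(\Upsilon)$, ordering the positive roots of $L$ so that those of $\Phi(\Upsilon)$ come last exhibits multiplication $U_\phi\times\Upsilon\to U_L$ as an isomorphism of varieties; together with $U_\phi\cap\Upsilon=\{1\}$ and the normality just proved, this yields $U_L=U_\phi\ltimes\Upsilon$.

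For the ``moreover'' part I would simply unwind the condition on indices. Fix $\beta=\alpha_{ij}\in\Phi(U_\phi)$, so $d+1\le i<j\le n$ with $j\equiv i\bmod d$, and let $\alpha=\alpha_{pq}\in\Phi(\Upsilon)$, so $1\le p\le d$, $q\ge p+d$, $q\equiv p\bmod d$. Then $\alpha+\beta\in\Phi(G)$ forces $q=i$ or $j=p$; the second fails since $p\le d<i<j$, so necessarily $q=i$. Thus $\alpha=\alpha_{pi}$ with $p$ the unique element of $\{1,\dots,d\}$ congruent to $i$ modulo $d$; this $\alpha_{pi}$ does lie in $\Phi(\Upsilon)$ because $i-p$ is a positive multiple of $d$, and indeed $\alpha_{pi}+\alpha_{ij}=\alpha_{pj}\in\Phi(G)$ since $p\ne j$. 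This gives existence, uniqueness, and the explicit formula.

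I do not anticipate a genuine obstacle here: the statement is bookkeeping with the mod-$d$ residue pattern. The only point requiring a little care is the variety-level decomposition $U_L\cong U_\phi\times\Upsilon$ in the second paragraph, where one must cite the standard fact that for a closed set of roots $\Psi$ with $\Psi\cap(-\Psi)=\varnothing$ the product map $\prod_{\alpha\in\Psi}U_\alpha\to U_\Psi$ is an isomorphism of varieties for every ordering of $\Psi$; applying this to $\Psi=\Phi(U_L)$ with the order above is what makes the semidirect-product assertion rigorous.
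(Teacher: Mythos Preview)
Your proof is correct and follows the same approach as the paper: the paper states the lemma with the preamble ``From the above discussions, one readily verifies the following'' and provides no further argument, leaving the combinatorial verification from the explicit descriptions of $\Phi(\Upsilon)$ and $\Phi(U_\phi)$ to the reader. You have simply carried out that verification in full detail, using the Chevalley commutator relations and the mod-$d$ index bookkeeping exactly as intended.
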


\subsubsection{Refined parameterisation} 
\begin{prop}\label{p:wild G'-bundles}
Every element of $\Bun_{\cG'}(k)$ can be represented by a double coset $I^- xJ$, where $x=\tw\ell$, and $\tw$ and $\ell$ satisfy:
\begin{enumerate}
\item[(i)] $\tw\alpha>0$ for all $\alpha \in \Delta(L)$; 
\item[(ii)] $\ell=wu$, where $w\in W_L$ and $u\in \Upsilon$.
\end{enumerate} 
Moreover, if we write that $\displaystyle u=\prod_{\alpha \in \Phi(\Upsilon)} u_\alpha$, then we can assume $u_\alpha$ is either $1$ or $\exp(E_\alpha)$. 
\end{prop}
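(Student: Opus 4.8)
The plan is to prove Proposition \ref{p:wild G'-bundles} by successively normalising a representative of a given class in $\Bun_{\cG'}(k)=I^-\backslash G(k(\!(s)\!))/J$, using the freedom to multiply by $I^-$ on the left and $J$ on the right. First I would start from an arbitrary representative $I^-xJ$ with $x=\tw\ell$, $\tw\in\tW$, $\ell\in L$, which exists by the decomposition $\bigcup_{\tw\in\tW,\,\ell\in L} I^-\tw\ell J/J$ established above. I would then address condition (i): if $\tw\alpha<0$ for some $\alpha\in\Delta(L)$, let $s_\alpha\in W_L\subseteq L$ be the corresponding simple reflection. Since $W_L\subseteq L$ and $L=P/P(1)$, I can lift $s_\alpha$ to an element of $P$, and absorb $s_\alpha$ on the right. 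More precisely, since $\tw$ normalises the standard apartment and $L$ acts through $P$, replacing $\tw$ by $\tw s_\alpha$ (and adjusting $\ell$ accordingly via conjugation) strictly decreases the number of roots in $\Delta(L)$ made negative by $\tw$ — this is the standard length-reduction argument for the action of $W_L$. Iterating, I arrive at $\tw$ with $\tw\alpha>0$ for all $\alpha\in\Delta(L)$, which is exactly the condition that $\tw$ is of minimal length in its coset $\tw W_L$, equivalently $\tw$ is the minimal-length representative of $\tw W_L$.

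Next I would handle condition (ii), the normal form $\ell=wu$ with $w\in W_L$ and $u\in\Upsilon$. Here I use the Bruhat decomposition of $L$: every element of $L$ can be written as $b_1 w b_2$ with $w\in W_L$ and $b_1,b_2\in B_L$, but I need a form adapted to $J=B_\phi P(1)$. Since $J$ contains $B_\phi$, and $B_\phi$ sits inside $B_L=T_L U_L$ with $U_L=U_\phi\ltimes\Upsilon$ by Lemma \ref{l:U_phi}, I can absorb into $J$ on the right the torus $T_L$ (using that $\Stab_L(\phi)=ZL_\phi$ and a central/torus-absorption argument, noting we have removed the centre), the factor $U_\phi\subseteq B_\phi\subseteq J$, and one of the two unipotent factors in a Bruhat-cell decomposition. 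The upshot is that modulo $J$ on the right, every $\ell\in L$ is equivalent to $wu$ with $w\in W_L$ and $u$ ranging over $\Upsilon$ (the complement of $U_\phi$ in $U_L$); I would carry this out by writing $\ell$ in the big-cell coordinates $\ell = u^- \, t \, w \, u^+$ with $u^-$ in the unipotent radical opposite to $w$-stuff, then pushing $t$ and the $U_\phi$-part into $J$ and commuting the remaining $\Upsilon$-part past $w$ — using that $\Upsilon$ is normal in $U_L$ (Lemma \ref{l:U_phi}) to keep everything controlled. The element $w$ on the left can then be further absorbed into $\tw$, but one must be careful that this does not destroy condition (i); the correct bookkeeping is to choose $w$ so that $\tw w$ is still minimal-length in $\tw w W_L$, or equivalently to only absorb the part of $w$ that preserves (i) — this interplay is the place where the two conditions interact.

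Finally, for the last sentence — that writing $u=\prod_{\alpha\in\Phi(\Upsilon)}u_\alpha$ one may take each $u_\alpha\in\{1,\exp(E_\alpha)\}$ — I would rescale: for each $\alpha\in\Phi(\Upsilon)$ with $u_\alpha=\exp(c_\alpha E_\alpha)$, $c_\alpha\neq 0$, I want to conjugate by a suitable element of the torus to normalise $c_\alpha$ to $1$. The subtle point is that the only torus elements I am still free to use are those that stabilise $\phi$ (so as not to disturb the already-fixed data at $\infty$) together with the diagonal torus $T$ acting through $I^-$ on the left; since $\Phi(\Upsilon)$ consists of roots $\alpha_{ij}$ with $i\in\{1,\dots,d\}$, $j\equiv i\bmod d$, and these roots are "independent enough" relative to the available torus (this uses the explicit description of $\Phi(\Upsilon)$ and the structure of $G_0\simeq L$), I can simultaneously scale all the nonzero $c_\alpha$ to $1$ — or, when the torus is too small, observe that the class in the double coset does not see the scalar because it can be absorbed on the other side. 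I expect the main obstacle to be precisely this last rescaling step together with the bookkeeping in the second paragraph: ensuring that absorbing the Weyl and torus parts on the right into $J$ and on the left into $I^-\tw$ is simultaneously compatible with keeping $\tw$ minimal for condition (i), and checking that the torus acting on the $\Phi(\Upsilon)$-coordinates is large enough to achieve the claimed normalisation. The other steps are essentially standard Bruhat/Iwahori combinatorics, but this compatibility is where the specific structure of $J$, $\Upsilon$, and the grading must be used carefully.
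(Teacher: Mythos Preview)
Your overall strategy is right---successively normalise using $I^-$ on the left and $J$ on the right---but you have the key absorption step backwards, and this creates the complications you worry about.

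After arranging condition (i), the paper's move is to write $\ell=bwu$ with $b\in B_L^-$ (the \emph{opposite} Borel of $L$), $w\in W_L$, $u\in U_L$, and then absorb $b$ into $I^-$ on the \emph{left}. This works precisely because of Lemma~\ref{l:intersection=B_L}: once $\tw\alpha>0$ for all $\alpha\in\Delta(L)$, one has $L\cap\tw^{-1}I^-\tw=B_L^-$, so $\tw b\tw^{-1}\in I^-$ and $I^-\tw bwu=I^-\tw wu$. You instead try to push the full torus $T_L=T$ into $J$ on the right, but this fails: $J=B_\phi P(1)$ contains only the subtorus $T_\phi\subset T$, not all of $T$. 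Your parenthetical about ``$\Stab_L(\phi)=ZL_\phi$ and a central/torus-absorption argument'' does not rescue this, since the centre has been removed and $T_\phi$ is genuinely smaller than $T$. The missing lemma is exactly what makes the left-absorption work, and it is why condition (i) must be arranged \emph{first}.

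Second, you misread the target normal form: the element $w\in W_L$ remains part of $\ell=wu$ and is \emph{not} absorbed into $\tw$. So there is no interplay between (i) and (ii) to manage---once $\tw$ is fixed satisfying (i), it stays fixed, and the decomposition of $\ell$ proceeds independently. This dissolves the ``main obstacle'' you anticipate. Finally, the rescaling step is cleaner than you suggest: after reducing $u$ to $\Upsilon$ (by pushing the $U_\phi$-factor into $J$, which you do correctly), one conjugates $u$ by $T_\phi$ alone. Since $\Phi(\Upsilon)=\{\alpha_{ij}:1\le i\le d,\ j\equiv i\bmod d,\ j>d\}$ has exactly one root for each second index $j\in\{d+1,\dots,n\}$, and $t\in T_\phi$ scales the $\alpha_{ij}$-coordinate by $t_j$, the $(n-d)$-dimensional torus $T_\phi$ acts with enough independence to normalise each nonzero coefficient to $1$. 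No appeal to $T$ acting through $I^-$ on the left is needed.
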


\subsubsection{An auxiliary lemma}
To prove this proposition, we need a lemma. Let $B_L^-\subseteq L$ be the Borel opposite to $B_L$.  
\begin{lem}\label{l:intersection=B_L}
If $\tw\alpha>0$ for all $\alpha \in \Delta(L)$, then  $L\cap\tw^{-1}I^-\tw=B_L^-$. 
\end{lem}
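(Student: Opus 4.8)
The plan is to unwind the statement into a comparison of root subgroups and the torus. Since $L$ is a connected reductive group (namely $L\simeq G_0$) with Borel $B_L$ and maximal torus $T$, any subgroup of $L$ containing $T$ is determined by the set of roots $\alpha\in\Phi(L)$ whose root subgroup $U_\alpha$ it contains. First I would observe that $T\subseteq L\cap\tw^{-1}I^-\tw$: indeed $T\subseteq L$ and $T\subseteq I^-\cap\tw I^-\tw^{-1}$ since $\tw$ normalises $T(\cO)$, hence $T\subseteq\tw^{-1}I^-\tw$ as well. So it remains to identify $\Phi\bigl(L\cap\tw^{-1}I^-\tw\bigr)$. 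For a \emph{finite} root $\alpha\in\Phi(L)\subseteq\Phi(G)$, the subgroup $U_\alpha\subseteq G$ lies in $\tw^{-1}I^-\tw$ if and only if $\tw U_\alpha\tw^{-1}=U_{\tw\alpha}\subseteq I^-$, i.e.\ if and only if the affine root $\tw\alpha$ (which has finite part $\tw\alpha$ in the usual sense, viewed in $\Phi^{\aff}$) is among the affine roots defining $I^-$. Here one must be slightly careful: $\tw\alpha$ is an affine root $\bar\alpha+m$ with $\bar\alpha\in\Phi(G)$ and $m\in\bZ$, and since $\alpha$ is a finite root, $U_\alpha\subseteq\tw^{-1}I^-\tw$ precisely when the constant-term shift forces $m=0$ (i.e.\ $\tw\alpha$ is genuinely finite, $\tw\alpha=\bar\alpha\in\Phi(G)$) \emph{and} $\bar\alpha$ is a negative root (since $I^-$ contains $U^-$ and the positive root subgroups only with a positive power of $s$).

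Next I would use hypothesis (i): $\tw\alpha>0$ for all $\alpha\in\Delta(L)$. This is the statement that $\tw$ sends the simple roots of $L$ to positive affine roots. The key reduction is that for $\alpha\in\Phi^+(L)$, writing $\alpha=\sum_{i}c_i\alpha_i$ as a non-negative combination of simple roots of $L$, the affine root $\tw\alpha=\sum_i c_i(\tw\alpha_i)$ is a non-negative combination of the positive affine roots $\tw\alpha_i$; in particular it is positive, so its ``level'' $m\geq 0$, and if $m=0$ its finite part is positive, which means $U_\alpha\not\subseteq I^-$ (and if $m>0$ then $U_{\tw\alpha}\subseteq I^-(1)\subseteq I^-$ but this is a loop-group element, not $U_\alpha$ itself — so $U_\alpha\not\subseteq\tw^{-1}I^-\tw$ either). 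Hence no positive root of $L$ contributes. Symmetrically, for $\alpha\in\Phi^-(L)$, I would check that $\tw\alpha$ can still be a positive affine root with level $0$ having negative finite part, in which case $U_\alpha\subseteq\tw^{-1}I^-\tw$; and one shows this actually happens for \emph{every} negative root of $L$, again by writing $-\alpha$ as a sum of simple roots and tracking the level — the hypothesis (i) together with the fact that $I^-$ already contains the full negative unipotent $U_L^-$ of $L$ forces $\tw\alpha$ to remain at level $0$. Assembling these two cases gives $\Phi\bigl(L\cap\tw^{-1}I^-\tw\bigr)=\Phi^-(L)=\Phi(B_L^-)$, and combined with the torus this yields $L\cap\tw^{-1}I^-\tw=B_L^-$.

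The main obstacle, I expect, is the careful bookkeeping of affine roots versus finite roots in the previous paragraph — in particular making the ``level stays at $0$ for negative roots of $L$'' claim airtight. One clean way to organise this is to note that $I^-=I^{\opp}\cap G(k[s,s^{-1}])$ and that $\tw^{-1}I^-\tw\cap G = \tw^{-1}I^{\opp}\tw\cap G$ is an Iwahori-type subgroup of $G(k)$ only in a degenerate sense; more robustly, one intersects first with the parahoric $P$ (or rather with $L$) and uses that the Moy--Prasad filtration at $\infty$ interacts with $\tw$ in a controlled way. An alternative, perhaps shorter, route: since both sides contain $T$, it suffices to compare with $B_L^-$ on the level of Lie algebras, i.e.\ show $\fl\cap\Ad(\tw^{-1})\fI^-=\fb_L^-$ where $\fI^-$ is the Lie algebra of $I^-$; this reduces everything to a weight-space computation indexed by $\Phi(L)\cup\{0\}$, and the hypothesis (i) is exactly what is needed to kill the positive weight spaces. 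I would present the argument along these lines, doing the affine-root casework once and carefully, and citing \cite{PappasRapoport} for the structure of $\tW$ and the affine root datum as needed.
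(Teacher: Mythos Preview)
Your reduction step is not valid as stated. The claim ``any subgroup of $L$ containing $T$ is determined by the set of roots $\alpha\in\Phi(L)$ whose root subgroup $U_\alpha$ it contains'' is false: $T$ and $N_L(T)$ both contain $T$ and neither contains any root subgroup, yet they differ. What \emph{is} true, and suffices here, is that any subgroup of $L$ containing the Borel $B_L^-$ is determined by its positive root subgroups. Indeed, if $B_L^-\subsetneq H$ then by Bruhat $H$ contains some $w\in W_L\setminus\{1\}$, and choosing $\beta\in\Phi^+(L)$ with $w^{-1}\beta<0$ gives $U_\beta=wU_{w^{-1}\beta}w^{-1}\subseteq wB_L^-w^{-1}\subseteq H$. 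Since your argument does establish $B_L^-\subseteq L\cap\tw^{-1}I^-\tw$, this patched version works---but you must invoke this Borel-containing fact, not the false torus-containing one. Your Lie-algebra alternative has the same defect: equality of Lie algebras yields equality of groups only once you know the intersection is connected, which you have not shown. There is also a local error in the $m>0$ case: the group $I^-$ contains exactly the affine root subgroups for \emph{negative} affine roots, so $\tw\alpha>0$ immediately gives $U_{\tw\alpha}\not\subseteq I^-$ regardless of the level; your assertion $U_{\tw\alpha}\subseteq I^-(1)\subseteq I^-$ when $m>0$ is backwards, and the remark that ``this is a loop-group element, not $U_\alpha$ itself'' is confused, since $\tw^{-1}U_{\tw\alpha}\tw=U_\alpha$ by definition.

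For comparison, the paper's proof of the reverse inclusion is organised differently. After showing $B_L^-\subseteq L\cap\tw^{-1}I^-\tw$, it applies the Bruhat decomposition $L=\bigsqcup_{w\in W_L}B_L^-wB_L^-$ and observes that a cell meets $\tw^{-1}I^-\tw$ if and only if $w\in\tw^{-1}I^-\tw$, i.e.\ $\tw w\tw^{-1}\in I^-$; since $\tw w\tw^{-1}\in\tW$, the Birkhoff decomposition of $G(k(\!(s)\!))$ forces $\tw w\tw^{-1}=1$, hence $w=1$. This bypasses the positive-root-subgroup analysis entirely and is somewhat cleaner, though your approach (once repaired as above) is also valid.
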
 
\begin{proof}
The assumption $\tw\alpha>0$ implies $\tw(-\alpha)<0$; thus, the one parameter subgroup $U_{-\alpha}$ is in fact a subgroup of  $\tw^{-1}I^-\tw$ for all $\alpha\in\Delta(L)\subset \Phi^\aff(G)$. These one parameter subgroups generate the unipotent radical $U_L^-$ of $B_L^-$. On the other hand, $T\subset\tw^{-1}I^-\tw$; thus, 
$B_L^-=TU_L^-\subseteq L\cap\tw^{-1}I^-\tw$.  

For the reverse inclusion, consider the Bruhat decomposition $ L=\bigsqcup_{w\in W_L}B_L^-wB_L^-$. 
Since $B_L^-$ is already a subgroup of $\tw^{-1}I^-\tw$, we obtain
\[
B_L^-wB_L^-\cap\tw^{-1}I^-\tw\neq\emptyset\Leftrightarrow B_L^-wB_L^-\subseteq\tw^{-1}I^-\tw\Leftrightarrow w\in\tw^{-1}I^-\tw\Leftrightarrow \tw w\tw^{-1}\in I^-.
\]
Now the Birkhoff decomposition of $G(k(\!(s)\!))$ implies that $\tw w\tw^{-1}\in I^-$ if and only if $w=1$. It follows that 
$L\cap\tw^{-1}I^-\tw\subseteq B_L^-$. 
\end{proof}

\subsubsection{Proof of Proposition \ref{p:wild G'-bundles}} 
Let $\cE\in \Bun_{\cG'}(k)$ and represent $\cE$ by a pair $(\tw, \ell)\in \tW\times L$. For any $w$ in the Weyl group $W_L$ of $L$, we may replace $\tw$ and $\ell$ with, respectively, $\tw w^{-1}$ and $w\ell$, without changing the double coset $I^-\tw\ell J$. Thus, we may assume $\tw\alpha>0$ for all $\alpha \in \Delta(L)$. 

Next, using the Bruhat decomposition of $L$, we can write 
\[
\ell=bwu,\qquad b\in B_L^-,\qquad u\in U_L, \qquad w\in W_L.
\]
The previous lemma implies that $\tw b\tw^{-1}\in I^-$. Thus, we may assume $b=1$ without changing the double coset $I^-\tw\ell J$. Also, since $J=B_\phi P(1)$ and $B_\phi=T_\phi U_\phi$, we may assume that $u\in \Upsilon$. Replacing $u$ with its $T_\phi$-conjugation would also not change the double coset. This concludes the proof of the proposition.  \qed

\subsubsection{A lemma for future use} 
\begin{lem}\label{l:Aut in B_phi}
Assume $x=\tw\ell$ where $\tw$ and $\ell=wu$ are as in Proposition \ref{p:wild G'-bundles}. Then 
\[
B_\phi \cap x^{-1} I^- x = B_{\phi}\cap\ell^{-1}B_L^-\ell=\{tv\in T_\phi U_{\phi}\mid (t^{-1}ut)vu^{-1}\in U_L\cap w^{-1}U_L^-w \}.
\]
\end{lem}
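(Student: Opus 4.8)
Here is the plan. The statement is an elementary but slightly fiddly computation, and I would split it into the two equalities, handling the first via Lemma~\ref{l:intersection=B_L} and the second by tracking a conjugation through the decomposition $B_\phi = T_\phi U_\phi$ and $U_L = U_\phi \ltimes \Upsilon$.

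For the first equality I would start from $x = \tw\ell$, so that $x^{-1}I^-x = \ell^{-1}\tw^{-1}I^-\tw\ell$. Take any $g \in B_\phi$; since $B_\phi \subseteq L$ and $\ell \in L$, the conjugate $\ell g\ell^{-1}$ again lies in $L$. As $\tw$ satisfies $\tw\alpha > 0$ for all $\alpha \in \Delta(L)$ (part~(i) of Proposition~\ref{p:wild G'-bundles}), Lemma~\ref{l:intersection=B_L} gives $L \cap \tw^{-1}I^-\tw = B_L^-$, and its proof moreover shows $B_L^- \subseteq \tw^{-1}I^-\tw$; hence, for an element of $L$, lying in $\tw^{-1}I^-\tw$ is \emph{equivalent} to lying in $B_L^-$. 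Therefore $g \in x^{-1}I^-x$ if and only if $\ell g\ell^{-1} \in B_L^-$, which is exactly the identity $B_\phi \cap x^{-1}I^-x = B_\phi \cap \ell^{-1}B_L^-\ell$.

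For the second equality I would write a general element of $B_\phi$ as $g = tv$ with $t \in T_\phi$, $v \in U_\phi$, and fix a lift $\dot w \in N_L(T)$ of $w$. The core computation is
\[
\ell g\ell^{-1} = \dot w\,u(tv)u^{-1}\,\dot w^{-1}
= \dot w\,t\bigl((t^{-1}ut)vu^{-1}\bigr)\dot w^{-1}
= (\dot w t\dot w^{-1})\,(\dot w n\dot w^{-1}),
\]
where $n := (t^{-1}ut)vu^{-1} \in U_L$ (using that $T$ normalises $U_L$, and $u \in \Upsilon \subseteq U_L$, $v \in U_\phi \subseteq U_L$). Here $\dot w t\dot w^{-1} \in T$ (independently of the chosen lift, since $T$ is abelian) and $\dot w n\dot w^{-1}$ is unipotent. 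Since $B_L^- = T \ltimes U_L^-$ and a unipotent element of $B_L^-$ has trivial image in $T$, membership $\ell g\ell^{-1} \in B_L^-$ is equivalent to $\dot w n\dot w^{-1} \in U_L^-$; conjugating back, this reads $n \in U_L \cap \dot w^{-1}U_L^-\dot w = U_L \cap w^{-1}U_L^-w$. Combined with the first equality this yields the claimed description of $B_\phi \cap x^{-1}I^-x$.

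I do not expect a genuine obstacle here; the work is bookkeeping. The points needing care are exactly the routine normalisation facts just used — that $w$ normalises $T$ so $\dot w t\dot w^{-1} \in T$ regardless of the representative, that $T$ normalises $U_L$ so $t^{-1}ut$ and $n$ stay in $U_L$, and that a unipotent element of $B_L^-$ must lie in $U_L^-$ — together with keeping the factorisation $g = tv$ consistent with the decomposition $U_L = U_\phi \ltimes \Upsilon$ when invoking Lemma~\ref{l:intersection=B_L}.
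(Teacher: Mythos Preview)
Your proof is correct and follows essentially the same route as the paper's own argument: the first equality comes directly from Lemma~\ref{l:intersection=B_L}, and the second is obtained by writing $\ell(tv)\ell^{-1} = (wtw^{-1})\cdot w\bigl((t^{-1}ut)vu^{-1}\bigr)w^{-1}$ and separating the torus and unipotent factors in $B_L^- = T U_L^-$. Your version is simply more explicit about the routine normalisation facts (lifts of $w$, $T$ normalising $U_L$, unipotents in $B_L^-$ lying in $U_L^-$) that the paper leaves implicit.
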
 
\begin{proof} 
The first equality is immediate from Lemma \ref{l:intersection=B_L}. For the second equality, we have
\[
\begin{split}
B_{\phi}\cap\ell^{-1}B_L^-\ell&=\{tv\in T_{\phi} U_\phi \mid wutvu^{-1}w^{-1}\in B_L^- \}\\
&=\{tv\in T_{\phi} U_\phi\mid (wtw^{-1})w(t^{-1}utvu^{-1})w^{-1}\in B_L^-=TU_L^- \} \\
&=\{tv\in T_{\phi} U_\phi\mid (t^{-1}ut)vu^{-1}\in U_L\cap w^{-1}U_L^-w \}.
\end{split}
\]
\end{proof}

\subsection{Generic part of $\Bun_{\cG'}$} \label{ss:genericWild}
In this subsection, we find a natural candidate for the generic locus of $\Bun_{\cG'}$.

\subsubsection{The generic part of $\Upsilon$} 
With respect to the adjoint action of $T_\phi$ on $\Upsilon$, there is a unique open dense orbit $\mathring{\Upsilon}$ consisting of the elements 
in $\Upsilon$ whose $U_\alpha$ component is nontrivial for all $\alpha\in \Phi(\Upsilon)$. Thus, $\mathring{\Upsilon}=\Ad_{T_\phi}(\mathring{u})$, where 
\begin{equation}
\mathring{u}:=\exp \left(\sum_{\alpha\in \Phi(\Upsilon)} E_\alpha\right). 
\end{equation} 
We think of $\mathring{\Upsilon}$ as the generic locus of $\Upsilon$. 

\subsubsection{Conjugating $J$ into $G(\cO_\infty)$}  
To find the generic locus of $\Bun_{\cG'}$, it is convenient to conjugate $P$ into $G(\cO_\infty)$. 
In \S \ref{ss:fundamentalAlcove}, we construct an element $\tw_d \in \tW$ which conjugates $P$ to a parahoric subgroup 
 $P'\subset G(\cO_\infty)$.
Let $J'$, $U'$, $\Upsilon'$, etc. denote the conjugates of $J$, $U$, $\Upsilon$, etc..

\subsubsection{Generic part of $\Bun(I^\opp, J')$} Let $\Bun(I^\opp, J')$ denote the moduli stack of $G$-bundles with $I^\opp$-level structure at $0$ and $J'$-level structure at $\infty$. As $J$ and $J'$ are conjugate, $\Bun(I^\opp, J')$ is isomorphic to $\Bun_{\cG'}$. Note that the standard Iwahori has a decomposition $I= T\Upsilon' J'$. Thus, analogous to \S \ref{s:wild bundles}, we have:   
\[
\Bun(I^\opp, J')(k)
=I^-\backslash G(k(\!(s)\!))/J'
=\bigsqcup_{\tw\in \widetilde{W}} I^-\backslash I^-\tw I/J' 
=\bigsqcup_{\tw\in \widetilde{W}} I^-\backslash I^-\tw \Upsilon' J'/J'. 
\]

As $J'$ contains $I$, we have a forgetful map $\Bun(I^\opp, J')\ra \Bun(I^\opp, I)$. As discussed in \S \ref{s:auxiliary}, the generic locus of $\Bun(I^\opp, I)$ consists of bundles labelled by $\tw\in \Omega$. Thus, it is natural to expect that the generic locus of $\Bun(I^\opp, J')$ consists of bundles labelled by pairs $(\tw, g)$, where $\tw\in \Omega$ and $g\in \mathring{\Upsilon}'$.

\subsubsection{} 
Returning to $\Bun_{\cG'}$, in view of above considerations, it is natural to expect that the generic locus of $\Bun_{\cG'}$ consists of bundles labelled by $(\tw, g)$, where $\tw\in \Omega \tw_d$ and $g\in \mathring{\Upsilon}$. We confirm this expectation in the next subsection.

\subsection{Automorphisms of $\cG'$-bundles}\label{ss:automorphism} 
\begin{prop}\label{p:wildAut}
Let $\cE\in \Bun_{\cG'}$ be a bundle associated to $I^-xJ$, where $x=\tw\ell$, and $\tw$ and $\ell=wu$ are as in Proposition \ref{p:wild G'-bundles}. Then 
\begin{enumerate}
\item[(i)] If $u\notin \mathring{\Upsilon}$, then $T_j\subseteq (\Aut_{\cG'}(\cE))^\circ$ for some $j\in \{d+1, d+2,..., n\}$. 
\item[(ii)] If $u\in \mathring{\Upsilon}$ and $w\neq 1$,  let $\beta=\alpha_{ij}$ be a root in $\Phi(U_L\cap w^{-1}U_L^-w)$. Then 
\begin{itemize}
\item[(a)] If $\beta\in \Phi(\Upsilon)$, then $T_j\subseteq (\Aut_{\cG'}(\cE))^\circ$. 
\item[(b)] If $\beta\in \Phi(U_\phi)$, then there exists a one-dimensional subscheme $Y_\beta\subset (\Aut_{\cG'}(\cE))^\circ$ of the form $Y_\beta=\{tv(t)\in B_\phi\,|\, t\in T_j, v(t)\in U_\beta\}$. 
\end{itemize} 
\item[(iii)] If $u\in \mathring{\Upsilon}$, $w=1$, and $\tw\notin \Omega \tw_d$, then for some lowest weight $\delta$ of the $L$-module $V$, we have  $\ell^{-1} U_{\delta}\ell\subseteq (\Aut_{\cG'}(\cE))^\circ$. 
\item[(iv)] If $u\in \mathring{\Upsilon}$, $w=1$, and $\tw\in \Omega \tw_d$, then $\Aut_{\cG'}(\cE)$ is trivial. 
\end{enumerate} 
\end{prop}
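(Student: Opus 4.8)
The plan is to compute $\Aut_{\cG'}(\cE)\simeq J\cap x^{-1}I^-x$ directly, with $x=\tw\ell$, $\ell=wu$ as in Proposition \ref{p:wild G'-bundles}, splitting the analysis according to the two pieces $J=B_\phi P(1)$ of the level structure at $\infty$. The reductive layer $B_\phi=T_\phi U_\phi$ is accessible through Lemma \ref{l:Aut in B_phi}, which identifies $B_\phi\cap x^{-1}I^-x$ with the set of $tv\in T_\phi U_\phi$ such that $(t^{-1}ut)vu^{-1}\in U_L\cap w^{-1}U_L^-w$; the pro-unipotent tail $\Aut_{\cG'}(\cE)\cap P(1)$ is instead governed by the affine-root structure, through the condition that $\tw$ send certain affine roots to negative ones. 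In all four cases the subschemes we exhibit (a torus $T_j$, a one-parameter family $Y_\beta$, or a root group $\ell^{-1}U_\delta\ell$) are irreducible and pass through the identity, so membership in $\Aut_{\cG'}(\cE)$ automatically places them in the identity component.

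For (i) and (ii)(a) I would set $v=1$ in Lemma \ref{l:Aut in B_phi}. Since $\Upsilon$ is abelian (Lemma \ref{l:U_phi}), and a torus $T_j$ with $j\in\{d+1,\dots,n\}$ scales exactly one root group of $\Upsilon$ --- namely the unique $U_{\alpha_{pj}}$ with $\alpha_{pj}\in\Phi(\Upsilon)$ having second index $j$ --- the element $(t^{-1}ut)u^{-1}$ lies in $U_{\alpha_{pj}}$. In case (i), if $u\notin\mathring{\Upsilon}$ pick $j$ so that the $U_{\alpha_{pj}}$-component of $u$ is trivial; then $(t^{-1}ut)u^{-1}=1$, and $T_j\subseteq T_\phi\subseteq B_\phi$ sits in the stabiliser. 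In case (ii)(a), $\alpha_{pj}=\beta$, which by hypothesis lies in $\Phi(U_L\cap w^{-1}U_L^-w)$, so again $T_j$ is in the stabiliser. For (ii)(b), $\beta=\alpha_{ij}\in\Phi(U_\phi)$ has, by Lemma \ref{l:U_phi}, a unique partner $\alpha=\alpha_{pi}\in\Phi(\Upsilon)$ with $\alpha+\beta=\alpha_{pj}\in\Phi(\Upsilon)$; conjugating $\mathring{u}$ by $t\in T_j$ and commuting an element $v(t)\in U_\beta$ past $u^{-1}$ produces a controlled $U_{\alpha_{pj}}$-term via $[U_\alpha,U_\beta]\subseteq U_{\alpha_{pj}}$, and choosing $v(t)=\exp((1-t_j)E_\beta)$ cancels it, leaving an element of $U_\beta\subseteq U_L\cap w^{-1}U_L^-w$. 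Hence $Y_\beta=\{t\,v(t)\mid t\in T_j\}\subseteq(\Aut_{\cG'}(\cE))^\circ$.

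For (iii) ($w=1$, $u=\mathring{u}$, $\tw\notin\Omega\tw_d$) I would instead exploit the $P(1)$-layer. As $\ell=u$ lifts to an element of $P$ normalising $P(1)$, any conjugate $\ell^{-1}U_\delta\ell$ with $U_\delta=U_{\widetilde{\delta}}\subseteq P(1)$ stays in $P(1)\subseteq J$; and since $x=\tw\ell$, one has $x(\ell^{-1}U_\delta\ell)x^{-1}=\tw U_\delta\tw^{-1}$, which lies in $I^-$ exactly when $\tw$ carries $\widetilde{\delta}$ to a negative affine root. The task is therefore to locate such a $\delta$ among the lowest weights of the $L$-module $V=P(1)/P(2)\simeq\fg_1$: this is precisely the role of the appendix, whose representation-theoretic characterisation of the barycentre $\check\rho/d$ should yield that $\tw\in\Omega\tw_d$ iff $\tw$ keeps all these lowest-weight affine roots positive. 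Thus $\tw\notin\Omega\tw_d$ produces the desired $\delta$, and $\ell^{-1}U_\delta\ell\subseteq(\Aut_{\cG'}(\cE))^\circ$.

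The main obstacle is (iv): showing the generic bundle ($w=1$, $u=\mathring{u}$, $\tw\in\Omega\tw_d$) has \emph{trivial} automorphism group. The reductive layer is quick --- Lemma \ref{l:Aut in B_phi} with $w=1$ forces $v=1$ and $t\in T_\phi$ to centralise $\mathring{u}$, impossible for $t\neq 1$ because the roots of $\Phi(\Upsilon)$ have second indices exhausting $\{d+1,\dots,n\}$. The difficulty is the pro-unipotent tail $\Aut_{\cG'}(\cE)\cap P(1)$: one must run an induction along the Moy--Prasad filtration $P(1)\supset P(2)\supset\cdots$, checking at each step that the induced stabiliser on the graded piece $\fp(j)/\fp(j+1)$ vanishes. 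The first step is controlled by genericity of $\mathring{u}$ together with $\tw\in\Omega\tw_d$ (equivalently, after first normalising $\tw=\tw_d$ via the $\Omega$-action of \S\ref{sss:Hk_alpha}), while higher steps require the explicit normalised Kac coordinates of $\check\rho/d$ from \S\ref{ss:fundamentalAlcove}. This generalises \cite[Lemma 2.3]{HNY}, but is considerably more delicate: $P$ is a non-Iwahori parahoric, so the graded pieces are larger and the affine-root bookkeeping heavier, and --- unlike the Kloosterman case --- the relevant bundle is not the trivial one, so one cannot invoke any simplification peculiar to the trivial bundle and must instead track the interaction of $\tw_d$, $\mathring{u}$, and the filtration throughout.
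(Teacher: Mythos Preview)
Your treatment of (i)--(iii) and of the $B_\phi$-layer in (iv) is correct and matches the paper's proof essentially line for line: the same use of Lemma~\ref{l:Aut in B_phi}, the same commutator calculation $t^{-1}\mathring{u}t=v\mathring{u}v^{-1}$ for (ii)(b), and the same appeal to Proposition~\ref{p:simple affine roots} (via the hypothesis $\tw\alpha>0$ for $\alpha\in\Delta(L)$) to produce a lowest weight $\delta$ with $\tw\delta<0$ in (iii).

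Where you diverge is the $P(1)$-layer of (iv), and here you are working much harder than necessary. You propose an induction along the Moy--Prasad filtration, tracking graded pieces and Kac coordinates. The paper's argument is a one-liner: since $\tw\in\Omega\tw_d$, the conjugate $\tw P\tw^{-1}$ is the standard parahoric $P'$ containing the Iwahori $I$, hence $\tw P(1)\tw^{-1}\subseteq I(1)$; but $I(1)\cap I^-=\{1\}$, so
\[
P(1)\cap x^{-1}I^-x=\ell^{-1}\bigl(P(1)\cap\tw^{-1}I^-\tw\bigr)\ell=\ell^{-1}\tw^{-1}\bigl(\tw P(1)\tw^{-1}\cap I^-\bigr)\tw\ell=\{1\}.
\]
No filtration argument is needed. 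What makes this work is precisely the meaning of $\tw\in\Omega\tw_d$: it conjugates the alcove $C_x$ (with basis $\Theta(x)=\Delta(L)\sqcup\wt^-(V)$) to the fundamental alcove, so the entire pro-unipotent radical is carried into $I(1)$ at once. The only extra step the paper supplies is the decomposition $J\cap x^{-1}I^-x=(B_\phi\cap x^{-1}I^-x)(P(1)\cap x^{-1}I^-x)$, which is not automatic from $J=B_\phi P(1)$ but follows because $P\cap\tw^{-1}I^-\tw$ is generated by $T$ and affine root subgroups, each lying in either $L$ or $P(1)$. Your instinct that (iv) is ``considerably more delicate'' than the Kloosterman case is thus misplaced: once one sees that $\tw_d$ realigns $P(1)$ with $I(1)$, the argument is no harder than \cite[Lemma~2.3]{HNY}.
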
 

We now discuss the proof of this proposition. 

\subsubsection{} 
For (i), write $u=\exp(\sum_{\alpha\in\Phi(\Upsilon)}\lambda_{\alpha}E_{\alpha})$.  By assumption, $\lambda_\alpha=0$ for some $\alpha=\alpha_{ij}\in \Phi(\Upsilon)$. One easily verifies that $T_j$ commutes with $u$; thus, by Lemma \ref{l:Aut in B_phi},
\[
T_j\subseteq (B_\phi \cap \ell^{-1} B_L^- \ell)^\circ \subseteq (J\cap x^{-1} I^- x)^\circ=(\Aut_{\cG'}(\cE))^\circ.  
\]
\subsubsection{} 
For part (ii).(a), let $t:=\mathrm{diag}(1,...,t_j,...,1)\in T_j$. We claim that $t\in B_\phi \cap \ell^{-1} B_L^- \ell$. To see this, note that $t^{-1}ut u^{-1} \in U_\beta\subset U_L\cap w^{-1} U_L^- w$, where the first inclusion follows from the fact that $\alpha(t)=1$ for all $\alpha\in \Phi(\Upsilon)-\{\beta\}$. As above, the result follows from Lemma \ref{l:Aut in B_phi}. 
\subsubsection{} 	
For part (ii).(b), note that any $u\in\mathring{\Upsilon}$ is $T_\phi$-conjugate to $\mathring{u}$; thus, we may assume $u=\mathring{u}$. Let  $t=\diag(1,...,t_j,..,1)\in T_j$ and $v=v(t):=\exp((1-t_j)E_\beta)$. 

Claim:  $tv\in B_\phi\cap\ell^{-1}B_L^- \ell$. 

In view of Lemma \ref{l:Aut in B_phi}, it is sufficient to show that 
$t^{-1}ut=vuv^{-1}$. To see the latter equality, note that by Lemma \ref{l:U_phi}, there exists a unique root $\alpha=\alpha_{pi}\in\Upsilon$ such that $\alpha+\beta=\alpha_{pj}\in \Phi(G)$. Thus, if we let $\log(u):=\sum_{\alpha\in\Phi(\Upsilon)}E_\alpha$, then we obtain 
\begin{align*} 
vuv^{-1} &= \exp(v \log(u) v^{-1}) = \exp(\log(u) + (1-t_j)[E_\beta, \log(u)]) \\
&= \exp(\log(u)-(1-t_j)E_{pj}) = \exp(\sum_{\alpha\in \Phi(\Upsilon)} E_\alpha - (1-t_j)E_{pj}) = t^{-1}u t, 
\end{align*} 
establishing the claim. 
(Note that higher order terms in the expansion of $v\log(u)v^{-1}$ vanish because $2\beta+\alpha$ is not a root.)

To conclude the proof of (ii).(b), let $Y_\beta:=\{t v(t)\, | \, t\in T_j\}$. Then $Y_\beta$ is a one-dimensional subscheme of $T_j U_\beta$ and the above claim implies 
\[
Y_\beta \subseteq (B_\phi \cap \ell^{-1} B_L^- \ell)^\circ \subseteq (J\cap x^{-1} I^- x)^\circ. 
\]
\subsubsection{} 
For (iii), recall that by assumption, $\tw\alpha>0$ for all $\alpha \in \Delta(L)$. Thus, if $\tw\notin \Omega\tw_d$, then by Proposition \ref{p:simple affine roots}, there exist lowest weight $\delta\in \wt^-(V)$ such that $\tw\delta<0$. Therefore, we obtain 
\[
\tw U_{\delta} \tw^{-1} \subseteq \tw P(1) \tw^{-1} \cap I^- \implies \ell^{-1} U_\delta \ell \subseteq (P(1) \cap x^{-1} I^- x)^\circ \subseteq (J\cap x^{-1} I^- x)^\circ. 
\]
\subsubsection{} 
Finally, for (iv), first observe that $P\cap x^{-1}I^-x=\ell^{-1}(P\cap\tw^{-1} I^-\tw)\ell$. Now $P\cap\tw^{-1}I^-\tw$ is generated by $T$ and affine root subgroups, and these affine root subgroups are contained in either $L$ or $P(1)$. Thus, we have
$P\cap \tw^{-1}I^-\tw=(L\cap \tw^{-1}I^-\tw)(P(1)\cap \tw^{-1}I^-\tw)$. Conjugating by $\ell^{-1}$, we obtain
\[
P\cap x^{-1}I^-x=(L\cap x^{-1}I^-x)(P(1)\cap x^{-1}I^-x).
\]
As $J=B_\phi P(1)\subseteq P$, this implies
\[
\Aut_{\cG'}(\cE)=J\cap x^{-1}I^-x=(B_\phi\cap x^{-1}I^-x)(P(1)\cap x^{-1}I^-x).
\]
Now the fact that $\tw\in\Omega\tw_d$ implies that $\tw P\tw^{-1}$ contains the standard Iwahori $I$. Thus, $\tw P(1)\tw^{-1}\subseteq I(1)$, which in turn implies
\[
P(1)\cap x^{-1} I^- x=x^{-1} (\tw P(1)\tw^{-1} \cap I^-)x=\{1\}. 
\]
On the other hand, as $U_L\cap U_L^-=\{1\}$,   Lemma \ref{l:Aut in B_phi} implies
\[
B_\phi \cap \ell^{-1} B_L \ell=\{ tv\in T_\phi  U_\phi\, | \, t^{-1}u t vu^{-1} =1\}.
\] 
We now show that for every $tv$ in the above set, $t=v=1$. 
Indeed,  since $v\in U_\phi$ and $t^{-1}u^{-1} tu\in \Upsilon$, the requirement $v=t^{-1}u^{-1}tu$ implies that $v\in U_\phi\cap \Upsilon=\{1\}$. Thus, $v=1$ and $tu=ut$. As $u\in \mathring{\Upsilon}$ and $t\in T_\phi$, we get 
$t\in \Stab_{T_\phi}(u)= \{1\}$. Thus, $\Aut_{\cG'}(\cE)=1$. This concludes the proof of the proposition. \qed

\subsubsection{Generic locus} 
As discussed in \S \ref{s:Kottwitz}, the Kottwitz homomorphism defines an isomorphism $\pi_0(\Bun_{\cG'})\simeq \bZ$. Let $\tw_1$ denote the generator of $\Omega$ specified in \S \ref{s:Omega}. The above proposition immediately implies: 
\begin{cor}\label{c:wild genericLocus} For each $\alpha \in \bZ$, the generic locus $\mathring{\Bun_{\cG'}^\alpha}$ consists of one bundle, namely, the one labelled by $(\tw_1^\alpha \tw_d, \mathring{u})$.
\end{cor}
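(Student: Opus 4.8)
The plan is to read Corollary \ref{c:wild genericLocus} off from Proposition \ref{p:wildAut}, which already contains all the substantive content. First I would observe that in cases (i), (ii), and (iii) of that proposition the automorphism group $\Aut_{\cG'}(\cE)$ contains a positive-dimensional subscheme of its identity component --- a torus $T_j$, a one-dimensional $Y_\beta$, or an affine line $\ell^{-1}U_\delta\ell$ --- so that $\dim\Aut_{\cG'}(\cE)\geq 1$ for every bundle $\cE$ not of the form described in case (iv), whereas in case (iv) one has $\Aut_{\cG'}(\cE)=\{1\}$. Since $\cE\mapsto\dim\Aut_{\cG'}(\cE)$ is upper semi-continuous and attains the value $0$ (for instance on the bundle labelled $(\tw_d,\mathring{u})$, which is case (iv) with $\tw=1\cdot\tw_d\in\Omega\tw_d$), the generic locus $\mathring{\Bun_{\cG'}}$ is precisely the open substack of bundles with trivial automorphisms. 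By Propositions \ref{p:wild G'-bundles} and \ref{p:wildAut}, these are exactly the bundles representable by $I^-xJ$ with $x=\tw u$, $\tw\in\Omega\tw_d$, and $u\in\mathring{\Upsilon}$.

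Next I would remove the remaining redundancy in this parametrisation. Since $\mathring{\Upsilon}=\Ad_{T_\phi}(\mathring{u})$, any such $u$ equals $t\mathring{u}t^{-1}$ for some $t\in T_\phi$, so $x=\tw t\mathring{u}t^{-1}=(\tw t\tw^{-1})\,(\tw\mathring{u})\,t^{-1}$. Here $\tw t\tw^{-1}\in T\subseteq I^-$, because a lift of $\tw\in\tW$ conjugates the constant torus to itself and $T\subseteq I^\opp\cap G(k[s,s^{-1}])$; and $t^{-1}\in T_\phi\subseteq B_\phi\subseteq J$. Hence $I^-xJ=I^-\tw\mathring{u}J$, so for each fixed $\tw\in\Omega\tw_d$ all choices $u\in\mathring{\Upsilon}$ yield one and the same bundle. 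To finish I would invoke the Kottwitz homomorphism of \S\ref{s:Kottwitz}: on the one-point uniformisation $\Bun_{\cG'}(k)=I^-\backslash G(F_\infty)/J$ it is given by $I^-xJ\mapsto\on{ord}(\det x)$. As $\det$ and $\on{ord}$ are homomorphisms, $\mathring{u}$ (being unipotent) contributes nothing, $\kappa$ restricts to the isomorphism $\Omega\simeq\bZ$ of \S\ref{s:Omega} with $\tw_1\mapsto 1$, and $\kappa(\tw_d)=0$ by the construction of $\tw_d$ in \S\ref{ss:fundamentalAlcove}. Therefore the bundle labelled $(\tw_1^\alpha\tw_d,\mathring{u})$ lies on $\Bun_{\cG'}^\alpha$, and as $\alpha$ ranges over $\bZ$ these bundles are pairwise non-isomorphic and exhaust all the trivial-automorphism bundles.

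Combining the two steps, $\mathring{\Bun_{\cG'}^\alpha}$ is the single (automatically open and non-empty) point represented by $(\tw_1^\alpha\tw_d,\mathring{u})$, which is the assertion of the corollary. I do not foresee a genuine obstacle in this argument: the delicate input is entirely contained in Proposition \ref{p:wildAut} (which in turn rests on Lemmas \ref{l:U_phi} and \ref{l:Aut in B_phi} and the barycentre characterisation of Proposition \ref{p:simple affine roots}). The only care needed within the corollary itself is the elementary bookkeeping above, ensuring that the non-canonical choices in the parametrisation of Proposition \ref{p:wild G'-bundles} --- the element $u\in\mathring{\Upsilon}$ and the representative of a coset in $\Omega\tw_d$ --- collapse correctly once one passes to isomorphism classes and records the Kottwitz degree.
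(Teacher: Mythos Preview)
Your proposal is correct and follows the same approach as the paper, which simply records that the corollary ``immediately'' follows from Proposition~\ref{p:wildAut}. You have spelled out the bookkeeping the paper suppresses: the redundancy in $u\in\mathring{\Upsilon}$ (which the paper handles via the normalisation clause at the end of Proposition~\ref{p:wild G'-bundles}) and the Kottwitz degree computation. One small caveat: the assertion $\kappa(\tw_d)=0$ is a normalisation convention rather than something established in \S\ref{ss:fundamentalAlcove}; since $\tw_d$ is determined only up to $\Omega$, the labelling of components could a priori be shifted by a constant, but this does not affect the content of the corollary.
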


\subsection{Proof of strict rigidity} 
The fact that the coarse moduli space of $\Bun_{\cZ}^0$ is a point was discussed in \S \ref{s:centre}. It remains to determine the relevant elements on $\Bun_{\cG'}$. Let $\cE\in \Bun_{\cG'}$ be a bundle associated to $I^-xJ$, where $x=\tw\ell$, and $\tw$ and $\ell=wu$ are as in Proposition \ref{p:wild G'-bundles}. 
Recall that we have a restriction map \eqref{eq:Res}
\begin{align*}
\Res: \Aut_{\cG'}(\cE)\simeq J\cap x^{-1}I^-x&\ra \Aut(\cE|_{\cO_0})\times\Aut(\cE|_{\cO_\infty})\simeq I^\opp\times J\\
h&\mapsto (xhx^{-1},h).
\end{align*}
The bundle $\cE$ is called relevant if the pullback of $\gamma_S=\cL_{\overline{\chi}}\boxtimes\cL_\mu$ to $(\Aut_{\cG'}(\cE))^\circ$ via $\Res$ is constant.

\subsubsection{}
If $\mathring{u}\in\mathring{\Upsilon}$ and $\tw\in\Omega\tw_d$, then Proposition \ref{p:wildAut} implies that $\cE$ is relevant. This gives one relevant point on each component of $\Bun_{\cG'}$. It remains to show that all other points are irrelevant. 
To this end, it suffices to show that the Frobenius trace of $\Res^*(\gamma_S)$ is non-constant over the subscheme of $(\Aut_{\cG'}(\cE))^\circ$ given in Parts (i), (ii), (iii) of Proposition \ref{p:wildAut}. 

\subsubsection{} Suppose $u\not\in\mathring{\Upsilon}$ and let $t:=(1,...,1,t_j,1,...,1)\in T_j$. The Frobenius trace of $\Res^*(\gamma_S)$ over $T_j$ is
\[
\overline{\chi}(\tw wutu^{-1}w^{-1}\tw^{-1})\mu(t)=\overline{\chi}(\tw wtw^{-1}\tw^{-1})\rho(t)=\chi_l^{-1}(t_j)\rho_{j-d}(t_j), \quad\quad\forall t_j\in k^*.
\]
Here, $l$ is determined by the requirement $(\tw w)T_j(\tw w)^{-1}=T_l$, and the first equality comes from the fact that $\overline{\chi}$ is trivial on $I^\opp(1)$ . If this trace function constantly equals $1$, then $\chi_l=\rho_{j-d}$, which contradicts the assumption that $\chi_i$'s and $\rho_j$'s are disjoint. 

\subsubsection{} Suppose $u\in\mathring{\Upsilon}$ and $w\neq 1$, and let $\beta=\alpha_{ij}$ be a root of $U_L\cap w^{-1}U_L^-w$. As any such $u$ is conjugate to $\mathring{u}$ by $T_\phi$, we may assume $u=\mathring{u}$. Let $t=(1,...,1,t_j,1,...,1)\in T_j$. Now we have two cases: 
\begin{itemize} 
\item[(a)] If $\beta\in\Phi(\Upsilon)$, then (as above) the trace function in question restricted to $T_j$ is $\chi_l^{-1}\rho_{j-d}$  and therefore non-constant. 

\item[(b)]  Suppose $\beta\in\Phi(U_\phi)$. Let $t=(1,...,1,t_j,1,...,1)\in T_j$ and $tv(t)\in Y_\beta$. By a similar argument, the Frobenius trace function over $Y_\beta$ equals
\[
\overline{\chi}((\tw wu)tv(t)(\tw wu)^{-1})\rho(tv(t))=\chi_l^{-1}(t_j)\rho_{j-d}(t_j),
\]
and is therefore non-constant. 
\end{itemize} 

\subsubsection{} Finally, suppose $u\in\mathring{\Upsilon}$, $w=1$, and $\tw\notin\Omega\tw_d$. Then there exists a lowest weight $\delta$ of the $L$-module $V$ such that $\ell^{-1}U_\delta\ell\subseteq(\Aut_{\cG'}(\cE))^\circ$. Suppose the Frobenius trace function is constant over $\ell^{-1}U_\delta\ell$. Let $E_\delta$ be a basis of $\fu_\delta=\mathrm{Lie}(U_\delta)$. Then $U_\delta=\{\exp(\lambda E_\delta)|\lambda\in k\}$. As $V=P(1)/P(2)\simeq\fp(1)/\fp(2)$, we may regard $\phi$ as a linear function on $\fp(1)/\fp(2)$. Thus, the Frobenius trace function over $\ell^{-1}U_\delta\ell$ is
\[
\overline{\chi}((\tw\ell)\ell^{-1}\exp(\lambda E_\delta)\ell(\tw\ell)^{-1})\mu(\ell^{-1}\exp(\lambda E_\delta)\ell)=\psi\phi(\lambda u^{-1}E_{\delta}u)=\psi(\lambda\phi(u^{-1}E_\delta u)).
\]
If this function equals $1$ for all $\lambda$, then we obtain
$\phi(u^{-1}E_\delta u)=0$. Note that replacing $u$ with a $T_\phi$-conjugate does not change the double coset; thus, the above equality holds for all $u\in\Ad_{T_\phi}(\mathring{u})=\mathring{\Upsilon}$. Moreover, $\phi$ is stabilised by $U_\phi$, so $\phi(\Ad_u E_\delta)=0$ for all $u\in \mathring{\Upsilon}U_\phi$. As $\mathring{\Upsilon}U_\phi$ is dense in $\Upsilon U_\phi=U_L$, we obtain $\phi(\Ad_{U_L}(E_\delta))=0$. Taking the differential, we get $\phi(\mathrm{ad}_{\fu_L}(E_\delta))=0$. Since $\delta$ is a lowest weight of $V$, $E_\delta$ and $\mathrm{ad}_{\fu_L}(E_\delta)$ span the irreducible submodule $V_\delta\subseteq V$ with lowest weight $\delta$. Thus, $\phi(V_\delta)=0$, which is a contradiction to Corollary \ref{c:phisp}. 
\qed

\section{Hecke eigenvalue in the wild case}\label{s:wild eigenvalue} 
The goal of this section is to prove Theorem \ref{t:eigenvalue} in the wild case. Since the proof is similar to the tame setting, we will only sketch the main steps. 

\subsection{Alternative description of $\Bun_\cG$.} It will be convenient to replace the parahoric $P$ at $\infty$ with its conjugate $P' \subset G(\cO)$, defined in \S \ref{ss:fundamentalAlcove}. Note that this does not change the isomorphism type of the stack $\Bun_{\cG}$.

\subsubsection{} Let $\tau:=[n/d]$ and $\sigma:=n-\tau d$.
Recall the integers $n_i$ defined in \eqref{ni}.
Let $\Bun_{1,2}$ be the classifying stack of  
$(\cE,F^*\cE,\{v^i\}_{1\leq i \leq n}, F_*\cE,\{v_i\}_{1\leq i \leq d})$, where:
\begin{itemize}
\item $\cE$ is vector bundle of rank n on $\bP^1$;
\item $\cE=F^0\cE\supset F^1\cE\supset\cdots\supset F^n\cE=\cE(-\{0\})$ is a decreasing filtration $F^*\cE$ giving a complete flag of the fibre of $\cE$ at $0$;
\item $v^i\in F^{i-1}\cE/F^i\cE$ is a non-zero vector for each $1\leq i\leq n$;
\item $\cE(-\{\infty\})=F_0\cE\subset F_1\cE\subset\cdots\subset F_{d}\cE=\cE$ is an increasing filtration $F_*\cE$ giving a partial flag (of type $P'$) of the fibre of $\cE$ at $\infty$ such that  $\dim F_i\cE/F_{i-1}\cE=\tau+1$ for $1\leq i\leq \sigma$; $\dim F_i\cE/F_{i-1}\cE=\tau$ for $\sigma+1\leq i\leq d$;
\item  $\{v_{n_i+1}, v_{n_i+2}, ...,v_{n_{i+1}}\}$ is a set of vectors in $F_i\cE/F_{i-2}\cE$ whose image in $F_i\cE/F_{i-1}\cE$ is a basis. Here, $F_{-1}\cE=F_{d-1}\cE(-\{\infty\})$ and $F_{-2}\cE=F_{d-2}\cE(-\{\infty\})$.
\end{itemize}

\subsubsection{} 
As in the tame setting, 
we can reformulate the moduli problem by choosing a trivialisation of $\cE$ over $\bP^1-\{0,\infty\}$. 
Let $\widetilde{\Bun}_{1,2}$ be the stack of bundles classifying the data 
$(\Lambda^*, \{v^i\}, \Lambda_*, \{v_i\})$, where the latter is defined analogously to the tame case. Then, we have  isomorphisms of stacks 
\[
\Bun_\cG\simeq \Bun_{1,2} \simeq \widetilde{\Bun}_{1,2}/G(k[t,t^{-1}]). 
\]
Let $\Bun_{1,2}^\alpha$ be the substack classifying data with degree $\alpha\in\bZ$. These are the components of $\Bun_{1,2}$.

\subsubsection{}  Note that after conjugation by $\tw_d$, the relevant points in $\Bun_{\cG'}$ become $I^-\tw \mathring{u}'J'$, where $\tw\in\Omega$. Moreover, the inclusion of a relevant orbit $j_\alpha: \cO \hookrightarrow \Bun_{\cG}^\alpha$ can be explicitly described as follows: 
\[
L_S=T\times B_{\phi'} V'\simeq\cO_\alpha=I^-(1)\backslash I^-(1)T \tw_1^\alpha \mathring{u}' J'/P'(2)
 \hookrightarrow \Bun_\cG^\alpha\simeq\Bun_{1,2}^\alpha.
\]
\subsubsection{}
We can describe the latter  embedding in terms of lattices. To this end, let $\star_0$ be the $G(k[t,t^{-1}])$-orbit of $(\Lambda^*(\star_0),v^i(\star_0),\Lambda_*(\star_0),v_i(\star_0))$ where
\begin{itemize}
\item $\Lambda^i(\star_0)=\text{Span}\{e_j|j>i\}$, $1\leq i\leq n$;
\item $v^i(\star_0)=e_{i+1}$;
\item $\Lambda_i(\star_0)=\text{Span}\{e_j|j\leq n_{i+1}\}$, $1\leq i\leq d$;
\item $v_i(\star_0)=e_i$, $1\leq i\leq d$.
\end{itemize}
The map $j_\alpha: L_S=T\times B_{\phi'}V'\hookrightarrow \Bun_{1,2}^\alpha$ is then given by 
\[
(t,g)\mapsto (\Lambda^i(\star_0),t\cdot e_{i+1}, \widetilde{w}_1^\alpha \mathring{u}'g\cdot\Lambda_i(\star_0),\widetilde{w}_1^\alpha g\cdot e_i).
\]


\subsection{Relevant part of the Hecke stack}
For $i\in \{1,2,...,d\}$, set $t_{n_i+1}:=x_i^{-1}$.  Let $j(k)$ be the $k$-th smallest number in 
the set $\{j\in \{1,2,...,n\} \, | \, j\neq 1+n_i, \, \forall i\}$ and define 
$t_{j(k)}:=(1-y_k)^{-1}$. Similar, but more involved, computations as the tame setting gives: 

\begin{prop}\label{p:wildHecke} Consider the correspondence  
\[
\begin{tikzcd}
&  \mathring{\mathrm{GR}}_{\omega_1} = \bGm^d\times(\bGm-\{1\})^{n-d} \arrow{dl}[swap]{\pi_1'} \arrow{dr}{\pi_2}  & \\
\bGm^n\times\bGm^{n-d} \times \bGa&       &  \bP^1-\{0,\infty\},
\end{tikzcd}
\]
where
\[
\pi_1'(x_i,y_j)=(t_1,...,t_n,y_1^{-1},...,y_{n-d}^{-1}, \sum_{i=1}^dx_i,), \qquad 
\pi_2(x_i,y_j)=\prod_{i=1}^dx_i\prod_{j=1}^{n-d}(1-y_j^{-1}).
\] 
Then the Hecke eigenvalue is $E_{\on{Std}}\simeq\pi_{2!}(\pi_1')^*(\cL_{\overline{\chi}}\boxtimes\cL_\rho\boxtimes\cL_\psi)[n-1]$. 
\end{prop}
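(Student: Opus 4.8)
The plan is to mimic the structure of the tame computation in \S\ref{s:tame eigenvalue}, but with the parahoric $P'$ and the combinatorics of the normalised Kac coordinate of $\check{\rho}/d$ replacing the Iwahori and the mirabolic data. First I would recall that, by \eqref{eq:eigenvalue}, the Hecke eigenvalue is computed from the restriction of the Hecke correspondence to the generic locus $\mathring{\Bun_{\cG}^0}$, pulling back the local system $\cL_0$ (which is the transport of $\gamma_S = \cL_{\overline{\chi}}\boxtimes\cL_\mu$) along $\pi_1$ and pushing forward along $\pi_2$. So the task is really to (a) identify $\mathring{\mathrm{GR}}_{\omega_1}$ as a scheme, (b) write down $\pi_1$ and $\pi_2$ explicitly, and (c) identify $\pi_1^*\cL_0$ with $(\pi_1')^*(\cL_{\overline{\chi}}\boxtimes\cL_\rho\boxtimes\cL_\psi)$.

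The key steps, in order: (1) Using the lattice description $\widetilde{\Bun}_{1,2}$ and the explicit relevant orbit $\star_0$ together with the embedding $j_\alpha$ of the previous subsection, parametrise an $R$-point of $\mathring{\mathrm{GR}}_{\omega_1}$ by a matrix $M\colon R\otimes\Lambda\to R\otimes\Lambda$ which (i) preserves the lattice chains $\Lambda^*(\star_0)$ and $\Lambda_*(\star_0)$ up to the standard $\omega_1$-modification at a single point $x\in\bP^1-\{0,\infty\}$, and (ii) sends $j_0(t,g)$ to the base point $\star_1$ of $\Bun_{1,2}^1$. As in the tame case, preserving the lattice chains forces $M$ to be ``almost bidiagonal'' — a companion-type matrix whose entries are the variables $x_i$ (on the $d$ diagonal blocks coming from the $P'$-structure) and $y_j$ (the off-diagonal entries), with one entry multiplied by $t$. (2) Translate the conditions $M(v^i) = v^i(\star_1)$ and $M(v_i)=v_i(\star_1)$ into equations expressing the torus coordinates $t_1,\dots,t_n$ in terms of the $x_i$ and $y_j$; here the bookkeeping of which basis vector $e_j$ sits in which graded piece is governed exactly by the integers $n_i$ of \eqref{ni} and the indexing function $j(k)$, which is where the Kac-coordinate combinatorics enters. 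This should reproduce $t_{n_i+1}=x_i^{-1}$ and $t_{j(k)}=(1-y_k)^{-1}$. (3) Compute $\det M$: expanding along the companion structure yields $\det M = \prod_{i=1}^d x_i \pm t\prod_{j=1}^{n-d} y_j$ (up to signs absorbed by the usual substitutions $x_i\mapsto -x_i$, $y_j\mapsto -y_j$ on some coordinates), so that $M$ fails to be an isomorphism precisely at $t = \pi_2(x_i,y_j) = \prod_{i=1}^d x_i\prod_{j=1}^{n-d}(1-y_j^{-1})$ after the substitution $x_i = $ appropriate function of $y_j$. (4) Finally, trace through the character sheaf: the $\cL_{\overline{\chi}}$-factor at $0$ pulls back via the $t_i$-coordinates to $\prod\chi_i^{\pm1}$ evaluated on the $x_i,y_j$; the $\cL_\mu = \cL_\rho\cdot\cL_{\psi\phi}$-factor at $\infty$ pulls back, via the description of $\phi = \phi_{\spp}$ in \eqref{eq:phisp} and the action of $B_{\phi'}V'$ on $\star_0$, to $\cL_\rho$ on the $y_j$-coordinates tensored with $\cL_\psi$ evaluated on $\sum_{i=1}^d x_i$ (the functional $\phi$ being a ``sum of superdiagonal entries'' contributes exactly this linear form). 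Assembling these gives $\pi_1^*\cL_0 \simeq (\pi_1')^*(\cL_{\overline{\chi}}\boxtimes\cL_\rho\boxtimes\cL_\psi)$, hence the claimed formula.

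The main obstacle I expect is step (1)–(2): getting the matrix form of $M$ right when $P'$ is a genuine (non-Iwahori) parahoric, because then the fibre $\mathrm{Gr}_{\omega_1}$ meets several Schubert cells relative to $P'$ and one must check that only the cell giving the stated $\bGm^d\times(\bGm-\{1\})^{n-d}$ survives the relevance/genericity constraint, while the other cells land in non-generic bundles and contribute nothing after restriction to $\mathring{\Bun_{\cG}^0}$. Disentangling the indexing — which $y_j$ is ``tame'' (lives in a size-$1$ graded piece, contributing a $(\bGm-\{1\})$-factor via $1-y_j^{-1}$) versus which $x_i$ is ``wild'' (contributes a free $\bGm$-factor) — is precisely the combinatorics of the normalised Kac coordinate, and keeping the signs and the single $t$-twist consistent through all the substitutions is the delicate part; once the correspondence and the pullback of $\cL_0$ are pinned down, the remaining identification with the hypergeometric trace function (via Gauss sums, exactly as in \S\ref{s:tame eigenvalue} and \S\ref{sss:hyp sum}) is routine and, since $\sH$ is irreducible and both sides are local systems on $\bGm$, suffices to conclude $E_{\on{Std}}\simeq\sH$ geometrically.
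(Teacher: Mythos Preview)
Your proposal is correct and follows exactly the approach the paper takes: the paper itself merely states that ``similar, but more involved, computations as the tame setting'' yield the proposition, and your plan is precisely a fleshing-out of what those computations are --- the lattice description of $\Bun_{1,2}$, the matrix form of $M$ constrained by the $P'$-level structure, the formulas $t_{n_i+1}=x_i^{-1}$ and $t_{j(k)}=(1-y_k)^{-1}$ (which the paper records just before the proposition), the determinant computation giving $\pi_2$, and the decomposition of $\cL_\mu$ into its $\cL_\rho$ and $\cL_{\psi\phi}$ parts. Your final paragraph about Gauss sums and comparison with $\on{tr}_\sH$ actually belongs to the subsequent proof of Theorem~\ref{t:eigenvalue} rather than to this proposition, but that is a scope remark, not a mathematical gap.
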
 

In the above proposition, $\pi_2$ is as in \eqref{eq:relevantHecke} and $\pi_1'$ is the composition of $\pi_1$ of \eqref{eq:relevantHecke} with $\phi$ and the projection from $B_\phi$ to $T_\phi$. Note that if $d=n$, then $\mathring{\on{GR}}_{\omega_1}\simeq \bGm^n$ and the above correspondence coincides with the Kloosterman diagram \cite[prop. 3.4]{HNY}.

\subsection{Proof of Theorem \ref{t:eigenvalue} in the wild case} 
To show that $E_{\on{Std}}$ is (geometrically) isomorphic to $\sH$, it is sufficient to show that their trace functions are a scalar multiple of each other. Similar computation to the tame case shows that for every $a\in \bP^1-\{0,\infty\}$, we have 
\[
\on{tr}_{E_{\on{Std}}}(a)\cdot(-1)^{n+m-1} \prod_{i=1}^{n-d}\rho_i(-1)\left(\sum_{a_i\neq0}\psi(a_i)\chi_i\rho_i^{-1} (a_i)\right)
=\on{tr}_{\sH(\chi, \rho)}.
\]
\qed


\appendix

\section{Principal gradings}\label{s:grading} 
In this section, we gather some facts about principal gradings of reductive Lie algebras. The results are probably known to the experts but we could not find an appropriate reference.

\subsection{Notation} 
\subsubsection{} Let $G$ be a split reductive group over a field $k$. Let $T$ be a maximal split torus, $X_*(T)=\Hom(\bGm, T)$ the group of cocharacters and $\cA_\bQ=\Hom(\bGm, T)\otimes \bQ$  the rational apartment. Let $\fg:=\on{Lie}(G)$ and  $\Phi$ the root system of $\fg$. For each $\alpha\in \Phi$, let $\fg_\alpha\subset \fg$ be the corresponding root space. Let $h$ denote the Coxeter number of $G$. We fix, once and for all, a Borel subgroup $B\subset G$. Let $\check{\rho}$ denote the half sum of positive coroots. 

\subsubsection{} 
Let $G^\aff=G(\!(s)\!)$ denote the loop group of $G$ with Lie algebra $\fg^\aff$ and affine roots $\Phi^\aff$. For each $\alpha\in \Phi^\aff$, let $\fg_\alpha^\aff \subset \fg^\aff$ be the corresponding affine root space.  Note that affine roots are of the form $\alpha+m$ where $\alpha \in \Phi$ and $m\in \bZ$. We call $\alpha$ the \emph{finite part} of $\alpha+m$. Thus, we have a map 
\begin{equation}\label{eq:Fin}
\mathfrak{Fin}: \Phi^\aff \ra \Phi
\end{equation} 
sending an affine root to its finite part.

\subsection{Inner gradings}\label{ss:inner gradings}

\subsubsection{} 
Let $x\in \cA_\bQ$ be an element of order $d$. By definition, $d$ is the smallest positive integer such that $\alpha(x)\in \frac{1}{d} \bZ$ for all $\alpha \in \Phi^\aff$. For $i\in\bZ/d\bZ$, let  
\[
\Phi_i:=\{\alpha \in \Phi \, | \, \alpha(x)-i/d\in \bZ\}.
\]
Define
\begin{equation} \label{eq:grad}
\fg=\bigoplus_{i\in \bZ/d\bZ} \fg_i, \qquad \qquad   
\fg_i:=
\begin{cases} 
\ft\oplus
\displaystyle \bigoplus_{\alpha \in \Phi_0} \fg_\alpha & i=0;\\
\displaystyle \bigoplus_{\alpha \in \Phi_i} \fg_\alpha & i\neq 0.
\end{cases}
\end{equation} 
Kac proved that, up to conjugation, these are all the inner gradings of $\fg$.

\subsubsection{} A grading is called \emph{principal} (or $N$-regular) if $\fg_1$ contains a principal nilpotent element. Using Kostant's description of principal nilpotent elements, it is easy to see that for all $d\in \{1,2,..,h\}$, the grading defined by $x=\check{\rho}/d$ is principal. According to \cite{RLYG}, up to conjugation, every inner principal grading  is of this form.

\subsubsection{Vinberg pairs} 
Let $G_0$ denote the connected subgroup of $G$ whose Lie algebra is $\fg_0$. Then $G_0$ is a split reductive group. The pair $(G_0, \fg_1)$ was studied extensively by Vinberg under the name of ``$\theta$-groups''. We refer to $(G_0, \fg_1)$ as a Vinberg pair. The adjoint action of $G$ on $\fg$ restricts to an action of $G_0$ on $\fg_1$, called the (finite) \emph{Vinberg representation}.

\subsubsection{Affine gradings} A point $x\in \cA_\bQ$ of order $d$ also defines a grading of $\fg^\aff$. Namely, let \[
\Phi^\aff_i:=\{\alpha \in \Phi^\aff\, |\, \alpha(x)=i/d\}
\]
and define 
\[
\fg^\aff = \bigoplus_{i\in \bZ} \fg^\aff_i,
\]
where 
\[
\fg^\aff_0:=\ft\oplus\bigoplus_{\alpha\in \Phi^\aff_0}\fg_\alpha  \qquad \qquad 
\fg^\aff_i:=\begin{cases} 
\displaystyle \bigoplus_{\alpha\in \Phi^\aff_i} \fg_\alpha & 1\leq i\leq d-1;\\
 \fg^\aff_{i+kd}=s^k\fg^\aff_i, & k\in\bZ.
 \end{cases} 
\]
This is sometimes referred to as the Kac--Moy--Prasad grading of $\fg^\aff$, cf. \cite{Chen}. 

\subsubsection{Affine Vinberg pairs} 
Let $L:=G_0^\aff\subset G^\aff$ be the reductive subgroup of $G^\aff$ with Lie algebra $\fg_0$, and $V:=\fg_1^\aff$. Then $L$ acts on $V$ via the adjoint action. One readily checks that $\mathfrak{Fin}$ \eqref{eq:Fin} defines bijections 
\[
\Phi^\aff(L)=\Phi^\aff_0\leftrightarrow \Phi(G_0), \qquad \qquad \Phi^\aff(V) = \Phi^\aff_1\leftrightarrow \Phi(\fg_1). 
\]
Thus, we have canonical isomorphisms $L\simeq G_0$ and $V\simeq \fg_1$, giving an identification of the finite and affine Vinberg representations.

\subsubsection{Moy--Prasad subalgebras} 
For each non-negative integer $j$, define 
\[
\fp(j):=\bigoplus_{i\geq j} \fg^\aff_i.
\]
Then $\fp=\fp(0)$ is a parahoric subalgebra of $\fg^\aff$ and $\fp(j)$'s are its Moy--Prasad subalgebras. 
Let $P$ be the corresponding (connected) parahoric group with Moy--Prasad subgroups $P(i)$. Then, we have isomorphisms 
\[
 P/P(1)\simeq G_0^\aff \simeq G_0, \qquad\qquad  P(1)/P(2)\simeq \fp(1)/\fp(2)=\fg^\aff_1 \simeq \fg_1.
\]

\subsection{Barycentres} 
\subsubsection{Alcoves} Let $C\subset \cA$ be an alcove. By definition, the boundary $\overline{C}-C$ consists of root hyperplane $\{H_\alpha\}$, where $\alpha$ runs over a basis $\Delta^\aff(C)$ of $\Phi^\aff$. The map $C\mapsto \Delta^\aff(C)$ defines a bijection between alcoves and bases of $\Phi^\aff$.

\subsubsection{Barycentre} An element $x\in \cA_\bQ$ of order $d$ is called a \emph{barycentre} if there exists an alcove $C$ such that for all $\alpha \in \Delta^\aff(C)$,  $\alpha(x)$ is either $0$ or $\frac{1}{d}$. If this condition holds for some alcove $C$, then it holds for all alcoves containing $x$ in their closure. Thus, the definition is independent of the choice of $C$. 

\begin{exam} One can show (e.g. using \cite[Lemma 3.1]{RY}) that $\check{\rho}/d$ is a barycentre if and only if $d\in \{1,...,h\}$. 
\end{exam} 

\subsubsection{Characterisation via the Vinberg representation} Recall that every $x\in \cA_\bQ$ gives rise to an affine Vinberg pair $(L,V)$. The chosen Borel $B\subset G$ determines a basis $\Delta(L)\subset \Phi^\aff(L)$ and lowest weights $\wt^-(V)\subset \Phi^\aff(V)$ of the $L$-module $V$. Let
\begin{equation}\label{eq:Theta}
\Theta(x):=\Delta(L)\sqcup \wt^-(V)\subset \Phi^\aff. 
\end{equation} 

\begin{prop}\label{p:simple affine roots} An element $x\in \cA_\bQ$ is a barycentre if and only if $\Theta(x)$ is a basis of $\Phi^\aff$. 
\end{prop}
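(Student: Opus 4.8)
I would prove the two implications separately; the forward implication is essentially formal, while the reverse one requires a structural input on the Vinberg representation $V=\fg_1$ of a barycentre grading. Throughout, recall from the affine Vinberg set-up that $\Phi^\aff(L)=\Phi^\aff_0$ and $\Phi^\aff(V)=\Phi^\aff_1$, so that $\Delta(L)\subseteq\Phi^\aff_0$ and $\wt^-(V)\subseteq\Phi^\aff_1$.

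\textbf{The easy direction ($\Leftarrow$).} Suppose $\Theta(x)=\Delta^\aff(C)$ for some alcove $C$. Every $\gamma\in\Delta(L)$ lies in $\Phi^\aff_0$, hence $\gamma(x)=0$; every $\delta\in\wt^-(V)$ lies in $\Phi^\aff_1$, hence $\delta(x)=1/d$. So each simple affine root of $C$ takes the value $0$ or $1/d$ on $x$ (and in particular $x\in\overline{C}$), which is exactly the definition of a barycentre. Note that the order of $x$ is indeed $d$, since $\Theta(x)$ being a basis forces $\wt^-(V)\neq\varnothing$.

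\textbf{The hard direction ($\Rightarrow$).} Assume $x$ is a barycentre, witnessed by an alcove $C_0$, so that $\Delta^\aff(C_0)=\Delta_0\sqcup\Delta_1$ according to whether the value at $x$ is $0$ or $1/d$; here $\Delta_0=\Delta^\aff(C_0)\cap\Phi^\aff_0$ is a basis of $\Phi^\aff_0$ and $\Delta_1\subseteq\Phi^\aff_1$. \emph{Step 1: normalise $C_0$.} The group $W_L:=W(\Phi^\aff_0)$ fixes $x$ (its reflection hyperplanes pass through $x$), so it permutes the alcoves having $x$ in their closure, preserving the barycentre-witnessing property since $(w\alpha)(x)=\alpha(x)$; as $W_L$ acts simply transitively on the bases of $\Phi^\aff_0$, after replacing $C_0$ by a $W_L$-translate we may assume $\Delta_0=\Delta(L)$. \emph{Step 2: $\Delta_1\subseteq\wt^-(V)$.} For $\beta\in\Delta_1$ and $\gamma\in\Delta(L)$, the difference $\beta-\gamma$ of two distinct simple affine roots of $C_0$ is not an affine root, so $[\fg_{-\gamma},\fg_\beta]=0$; since the $\fg_{-\gamma}$ ($\gamma\in\Delta(L)$) generate $\fn_L^-$, the line $\fg_\beta$ is a lowest-weight line of $V$, whence $\beta\in\wt^-(V)$. \emph{Step 3: $\wt^-(V)\subseteq\Delta_1$.} The key point is that, because $x$ is a barycentre, the grading of $\fg^\aff$ defined by $x$ coincides with the one in which the root space $\fg_\beta$ has degree the total multiplicity of the $\Delta_1$-simple roots in $\beta$ (indeed $d\,\beta(x)=\sum_{\alpha_i\in\Delta_1}m_i$ when $\beta=\sum m_i\alpha_i$ in the simple affine roots of $C_0$). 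Since the positive part of $\fg^\aff$ (with respect to $C_0$) is generated as a Lie algebra by the Chevalley generators $e_\gamma$, $\gamma\in\Delta^\aff(C_0)$, its degree-one part $V$ is spanned by iterated brackets using exactly one $e_\beta$ with $\beta\in\Delta_1$; hence $V=\sum_{\beta\in\Delta_1}U(\fn_L^+)\,\fg_\beta$. Each summand is a finite-dimensional $L$-module generated by a lowest-weight vector, hence irreducible with lowest weight $\beta$, so $V$ has at most $|\Delta_1|$ irreducible constituents and $|\wt^-(V)|\le|\Delta_1|$; combined with Step 2 this gives $\wt^-(V)=\Delta_1$. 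Therefore $\Theta(x)=\Delta(L)\sqcup\wt^-(V)=\Delta_0\sqcup\Delta_1=\Delta^\aff(C_0)$ is a basis of $\Phi^\aff$.

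\textbf{Main obstacle.} Everything is formal except Step 3, which is really the statement that for a barycentre grading the lowest weights of the Vinberg representation $\fg_1$ are exactly the $\Delta_1$-simple affine roots. The way I expect this to go is to recognise the $x$-grading as the grading attached to the subset $\Delta_1$ of simple affine roots of $C_0$ — a reformulation available \emph{precisely because $x$ is a barycentre} — and then to extract the generation statement $V=\sum_{\beta\in\Delta_1}U(\fn_L^+)\fg_\beta$ from the Chevalley presentation of $\fg^\aff$. The $W_L$-normalisation in Step 1 is a small but genuinely needed preliminary that brings the witnessing alcove's Levi into agreement with the one cut out by the fixed Borel $B$.
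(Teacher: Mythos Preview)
Your proof is correct and follows the same overall architecture as the paper: the easy direction is identical, and for the hard direction both you and the paper first use $W_L$ to normalise the witnessing alcove so that $\Delta_0=\Delta(L)$, and then establish $\Delta_1=\wt^-(V)$. The technical implementation of this last equality, however, is genuinely different. The paper works purely with root combinatorics: for $\theta\in\wt^-(V)$ it writes $\theta=\sum n_i\beta_i$ in the simple affine roots of $C_0$, uses the sign condition to isolate a single $\beta_j\in\Delta_1$, and then argues from the lowest-weight property that all $\Delta_0$-coefficients vanish; the reverse inclusion is then deduced from uniqueness of the expression in simple roots. Your route is representation-theoretic: you get $\Delta_1\subseteq\wt^-(V)$ immediately from the fact that a difference of distinct simple roots is never a root (so $\fn_L^-$ kills each $\fg_\beta$), and you get the opposite inclusion by recognising the $x$-grading as the grading by $\Delta_1$-height, invoking Chevalley generation of the positive part to write $V=\sum_{\beta\in\Delta_1}U(\fn_L^+)\fg_\beta$, and then counting irreducible constituents. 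Your argument for $\Delta_1\subseteq\wt^-(V)$ is cleaner than the paper's; your argument for the reverse inclusion trades the paper's bare-hands root manipulation for a small amount of structure theory (generation of positive real root spaces by simple ones, irreducibility of cyclic lowest-weight modules), which makes that step more conceptual and also yields the decomposition of $V$ into irreducibles along the way.
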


\begin{ntn} \label{n:alcove} 
For a barycentre $x$, we let $C_x$ denote the alcove corresponding to the basis $\Theta(x)$. 
\end{ntn} 

\begin{proof}
By the definition of $L$ and $V$, we have 
\[
\alpha(x)= 
\begin{cases} 
0 & \alpha \in \Delta(L);\\
\frac{1}{d} & \alpha \in \wt^-(V). 
\end{cases} 
\]
Thus, if $\Theta(x)$ is a basis of $\Phi^\aff$, then $x$ is a barycentre (of the facet of the alcove corresponding to $\Theta(x)$). 

Conversely, suppose $x$ is a barycentre. Let $C$ be any alcove containing $x$ in its closure.  Let us write $\Delta^\aff(C)=\{\beta_1,...,\beta_{r+1}\}$, where
\[
\beta_i(x)=
\begin{cases} 
0 & i\in \{1,..., l\};\\
\frac{1}{d} &i\in \{l+1,..., r+1\}.
\end{cases} 
\]
A priori, $\{\beta_1,...,\beta_l\} \subseteq\Phi(L)$ and $\{\beta_{l+1},...,\beta_{r+1}\} \subseteq \Phi(V)$.

We claim that $\{\beta_1,...,\beta_l\}$ is, in fact,  a basis for $\Phi(L)$.
Indeed, as $\Delta^\aff(C)$ is a basis for $\Phi^\aff$, every  affine root has a unique expression $\alpha=\sum_{i=1}^{r+1}n_i\beta_i$, where all $n_i$'s are either nonnegative or non-positive. In particular, for $\alpha\in \Phi(L)$, we have 
\[
0=\alpha(x)=\sum_{i=1}^l n_i\beta_i(x)+\sum_{i=l+1}^{r+1}n_i\beta_i(x)=\frac{1}{d}\sum_{i=l+1}^{r+1}n_i.
\]
Therefore, $n_i=0$ for all $l+1\leq i\leq r+1$. Thus, every root of $L$ has a unique expression of the form $\sum_{i=1}^l n_i \beta_i$, establishing the claim.

It follows that there exists an element $w\in W_L$ such that 
$w\{ \beta_1,...,\beta_l \}=\Delta(L)$. Since $w\cdot C$ also contains $x$ in its closure, we can assume without loss of generality that $w=1$.
 
It remains to show that $\{\beta_{l+1},...,\beta_{r+1}\}=\wt^-(V)$. Let $\theta\in \wt^-(V)$ and write 
$\theta=\sum_{i=1}^{r+1}n_i\beta_i$, where all $n_i$ have the same sign. Then
\[
\frac{1}{d}=\theta(x)=\sum_{i=1}^ l n_i\beta_i(x)+\sum_{i=l+1}^{r+1}n_i\beta_i(x)=\sum_{i=l+1}^{r+1}n_i\frac{1}{d}.
\]
Thus, there exists a unique $j\in \{l+1,...,r+1\}$ such that $n_j=1$ and  $n_i=0$ for all other $i$ in this set. Moreover, as all $n_i$ have the same sign, we obtain that $n_i\geq 0$ for all $i\in \{1,...,l\}$. Next, note that 
\[
\theta +\sum_{i=1}^ l n_i(-\beta_i)=\beta_{j}\in\Phi(V).
\]
Since $\{\beta_1,...,\beta_l\}$ are positive roots of $L$ and $\theta$ is a lowest weight, the above relation implies that $\theta=\beta_j$. It follows that 
$\wt^-(V)\subseteq \{\beta_{l+1},..., \beta_{r+1}\}$.

To see the reverse inclusion, note that every weight can be obtained from a lowest weight; thus, in particular, for every $\beta_\kappa$, $\kappa\in \{l+1,..., r+1\}$, there exists a lowest weight $\theta$ and integers $n_i\geq 0$ such
\[
\beta_\kappa = \theta + \sum_{i=1}^l n_i \beta_i.
\]
By the above discussions, $\theta=\beta_j$ for some $j\in \{l+1,..., r+1\}$. By the uniqueness of the expression of roots as combination of simple affine roots, we conclude that $n_i=0$ for $i=1,...,l$ and $\beta_\kappa=\theta$; thus $\{\beta_{l+1},..., \beta_{r+1}\}\subseteq \wt^-(V)$. This completes the proof of the proposition.

\end{proof}

\subsubsection{} The above proof gives us some information about the  Vinberg representation:
\begin{cor}\label{c:irredRep} If $x$ is a barycentre of order bigger than one, then $V$ is a direct sum of $r+1-l$ irreducible $L$-submodules, where $r$ and $l$ are the semisimple ranks of $G$ and $L$, respectively. 
\end{cor}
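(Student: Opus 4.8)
The plan is to read off the decomposition directly from the combinatorial structure established in the proof of Proposition~\ref{p:simple affine roots}. Recall from that proof that if $x$ is a barycentre, then $\Theta(x)=\Delta(L)\sqcup\wt^-(V)$ is a basis of $\Phi^\aff$, so $|\Delta(L)|+|\wt^-(V)|=r+1$, where $r+1$ is the number of nodes in the affine Dynkin diagram of $G$. Since $|\Delta(L)|=l$, the semisimple rank of $L$, we get $|\wt^-(V)|=r+1-l$. So the first step is simply to observe that $V$ has exactly $r+1-l$ lowest weights as an $L$-module.

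The second step is to argue that distinct lowest weights give distinct (hence independent, by direct sum) irreducible constituents, and that every irreducible constituent has a lowest weight appearing in $\wt^-(V)$. Here I would invoke the standard representation theory of the reductive group $L$ acting on the finite-dimensional module $V=\fg_1$: since we are working with exponentials of the form $\exp(X)=I+X$ and no characteristic restriction is needed (cf.\ the remark after \S\ref{sss:standardRoots}), the module $V$ is completely reducible as an $L$-module in the cases at hand (this follows from $L$ being a product of general linear groups in the $\GL_n$ setting, or more generally from the principal grading structure), and each irreducible summand, being a highest-weight module for the opposite Borel, has a one-dimensional lowest weight space spanned by a root vector $E_\delta$ with $\delta\in\wt^-(V)$. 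Conversely every element of $\wt^-(V)$, being a lowest weight of $V$, occurs as the lowest weight of some irreducible summand. Thus the number of irreducible summands equals $|\wt^-(V)|=r+1-l$. I would also note that the hypothesis that $x$ has order bigger than one is what guarantees $\fg_1\neq\fg_0$ is a genuine (nonzero, proper) piece and that $\wt^-(V)$ consists of genuine affine roots with $\delta(x)=1/d$, so the count is the honest one; in the $\GL_n$ principal grading this is transparent from the explicit description of $\fg_1$ as a sum of off-diagonal blocks.

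The main obstacle, such as it is, is making precise the claim that the lowest weight spaces are one-dimensional and that the irreducible decomposition is governed by $\wt^-(V)$ rather than merely bounded by it — i.e.\ ruling out that two of the listed lowest weights belong to the same irreducible constituent, or that there are ``extra'' constituents with lowest weights not in the listed set. Both are handled by the uniqueness-of-expression argument already used in the proof of Proposition~\ref{p:simple affine roots}: since $\Theta(x)$ is a basis of $\Phi^\aff$, every affine root of $V$ has a unique expansion $\sum n_i\beta_i$ with the $\beta_i$ nonnegative combinations of elements of $\Theta(x)$, and exactly one of the coefficients on elements of $\wt^-(V)$ equals $1$ while the rest vanish; this pins each weight of $V$ to a single $L$-orbit under the action of $W_L$ raising from a unique lowest weight, giving exactly $r+1-l$ orbits and hence $r+1-l$ irreducible summands. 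I expect the write-up to be short: essentially one sentence extracting the count from the basis property, plus one paragraph on complete reducibility and the lowest-weight bijection.

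\begin{proof}
By Proposition~\ref{p:simple affine roots}, the set $\Theta(x)=\Delta(L)\sqcup \wt^-(V)$ is a basis of $\Phi^\aff$; hence $|\Delta(L)|+|\wt^-(V)|$ equals the number of simple affine roots of $G$, namely $r+1$. Since $|\Delta(L)|=l$, we conclude $|\wt^-(V)|=r+1-l$.

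It remains to see that $V$ decomposes as a direct sum of $|\wt^-(V)|$ irreducible $L$-modules. As $x$ has order greater than one, $V=\fg_1^\aff\simeq\fg_1$ is a nonzero $L$-module, and (in our setting, where $L$ is a product of general linear groups) it is completely reducible. Each irreducible summand $V'\subseteq V$ is generated by its lowest weight space under the action of the unipotent radical $U_L$ of $B_L$; since all weight spaces of $\fg$ are root spaces, this lowest weight space is one-dimensional and spanned by a root vector $E_\delta$ with $\delta\in\wt^-(V)$. Conversely, using that $\Theta(x)$ is a basis of $\Phi^\aff$, every weight of $V$ has a unique expression as a nonnegative integer combination of elements of $\Theta(x)$ in which exactly one coefficient on a lowest weight is $1$ and all other coefficients on lowest weights vanish; arguing as in the last part of the proof of Proposition~\ref{p:simple affine roots}, each such weight lies in the $W_L$-orbit generated from a unique element of $\wt^-(V)$, so each element of $\wt^-(V)$ is the lowest weight of exactly one irreducible summand. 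Therefore the number of irreducible summands equals $|\wt^-(V)|=r+1-l$.
\end{proof}
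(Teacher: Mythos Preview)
Your proof is correct and takes the same route the paper intends: the corollary is stated without proof, immediately after Proposition~\ref{p:simple affine roots}, as a consequence of that argument, and you spell out exactly how—counting $|\wt^-(V)|=r+1-l$ from the basis property and assigning each weight of $V$ to a unique lowest weight via its expansion in $\Theta(x)$. One minor point: your appeal to complete reducibility of $L$-modules (``$L$ is a product of general linear groups'') is not valid in positive characteristic as stated, but it is also unnecessary, since the expansion argument already yields the direct-sum decomposition $V=\bigoplus_j V_j$ into $L$-submodules (one for each $\beta_j\in\wt^-(V)$), and irreducibility of each $V_j$ can be checked directly (in the $\GL_n$ case, $V_j\cong\Hom(k^a,k^b)$ as a $\GL_a\times\GL_b$-module).
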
 

 It would be interesting to further understand the pair $(L,V)$ and the structure of the $L$-module $V$. In the next subsection, we prove some results in this direction for $\GL_n$.

\subsubsection{Normalised Kac coordinate} Let us write $\Delta^\aff=\{\alpha_0, \alpha_1,...,\alpha_r\}$. Let $x\in \cA_\bQ$ be an element of order $d$. Let $\tw\in \tW$ be an element of the Iwahori-Weyl group which maps $x$ into the closure of the fundamental alcove. The \emph{normalised Kac coordinates} of $x$ is 
\[
d\cdot(\alpha_0(\tw x),..., \alpha_r(\tw x))\in \bZ_{\geq 0}^{r+1}.
\]
 The above proposition shows that the normalised Kac coordinates of a barycentre consists of $l$ zeros and $r+1-l$ ones. (Of course, the exact location of zeros and ones depends on the chosen ordering on $\Delta^\aff$.)

\subsection{Inner principal gradings for $\GL_n$}\label{ss:principal gradings GL_n} Henceforth, we restrict to the case $G=\GL_n$  and use the notation of \S \ref{s:notation}. So elements of $G$ are denoted by $(a_{ij})$ and roots by $\alpha_{ij}$. Fix an integer  $d\in \{1,2,...,n\}$ and consider the (principal) grading of $\fg$ defined by the barycentre $x=\check{\rho}/d$. Our goal is to understand the pair $(G_0, \fg_1)$ and the corresponding Vinberg representation. Note that if $d=1$, then $(G_0,\fg_1)=(G,\fg)$ and the Vinberg representation is simply the adjoint representation. Thus, we may assume $d>1$ when convenient.

\subsubsection{}  First, note that the definition of the grading \eqref{eq:grad} implies  
\[
G_0=\langle T,U_{\alpha}\mid \mathrm{ht}(\alpha)\equiv0\mod d\rangle = \{(a_{ij})\in G\mid a_{ij}=0\,\,  \textrm{when} \,\, i\not\equiv j\mod d\},
\]
and
\[
\fg_1=\bigoplus_{\mathrm{ht}(\alpha)\equiv1\mod d}\fg_{\alpha} = \bigoplus_{j\equiv i+1\mod d} \on{Span}\{E_{ij}\}.
\]

\subsubsection{}\label{sss:V}
 Next, define 
\[
G_i:=\{(a_{pq})\in G\mid p\neq q: a_{pq}=0 \ \text{unless}\  p\equiv q\equiv i\mod d;\ a_{pp}=1, p\not\equiv i\mod d \},
\]
and
\[
V_i:=\bigoplus_{p\equiv i, \,\, q\equiv i+1\on{mod}\, d}\on{Span}\{E_{pq}\}.
\]

\begin{prop}\label{p:Vinberg's pair} 
Let $\displaystyle \tau:=[n/d]$ and $\sigma:=n-\tau d$. Let $\Delta(G_0)$ be the set of simple root of $G_0$. Then
\begin{itemize} 
\item[(i)]
$G_0=\prod_{i=1}^d G_i$, where $G_i\simeq GL_{\tau+1}$ for $1\leq i\leq \sigma$ and $G_i\simeq GL_\tau$ for $\sigma+1\leq i\leq d$. Moreover, 
\[
\Delta(G_0)=\{\alpha_{i,i+d}\, | \, 1\leq i\leq n-d \}. 
\]
\item[(ii)]  $\fg_1=\bigoplus_{i=1}^d V_i$. Moreover, when $d>1$, each
$V_i$ is an irreducible $G_0$-representation with lowest weight 
\[
\alpha_{(i)}:=
\begin{cases}
\alpha_{i+\tau d,i+1} \qquad\ \ 1\leq i\leq \sigma;\\
\alpha_{i+(\tau-1)d,i+1} \quad \sigma+1\leq i\leq d-1;\\
\alpha_{\tau d,1}   \qquad\qquad\ i=d.
\end{cases}
\]
\end{itemize}
\end{prop}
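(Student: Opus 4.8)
The plan is to prove both parts by explicit bookkeeping with the matrix descriptions of $G_0$ and $\fg_1$ recorded just above the proposition. The only input needed is the identity $\mathrm{ht}(\alpha_{ij})=j-i$, which is immediate from the formula for $\check{\rho}$ in \S\ref{sss:standardRoots}; hence $\alpha_{ij}(x)=(j-i)/d$, so $\alpha_{ij}\in\Phi_0$ exactly when $j\equiv i\ (\mathrm{mod}\ d)$ and $\alpha_{ij}$ contributes to $\fg_1$ exactly when $j\equiv i+1\ (\mathrm{mod}\ d)$.

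For part (i), I would partition $\{1,\dots,n\}$ into its residue classes $C_i=\{p : p\equiv i\ \mathrm{mod}\ d\}$ for $i=1,\dots,d$ (with $C_d$ the class of $0$). Writing $n=\tau d+\sigma$ with $0\le\sigma<d$ gives $|C_i|=\tau+1$ for $1\le i\le\sigma$ and $|C_i|=\tau$ for $\sigma<i\le d$. Since a root vector $E_{pq}$ lies in $\fg_0$ iff $p,q$ lie in a common class, the subgroup $G_i$ of \S\ref{sss:V} (which acts as $\GL$ on $\on{Span}\{e_p : p\in C_i\}$ and fixes the other basis vectors) lands in $G_0$; the $G_i$ pairwise commute, pairwise intersect in $\{1\}$, and jointly generate $G_0$, so $G_0=\prod_{i=1}^d G_i$ with $G_i\simeq\GL_{|C_i|}$, i.e. $G_i\simeq\GL_{\tau+1}$ for $i\le\sigma$ and $G_i\simeq\GL_\tau$ for $i>\sigma$. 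The simple roots of $G_i$ relative to $B\cap G_i$ are $\alpha_{p,p+d}$ for consecutive elements $p<p+d$ of $C_i$, and since $p$ and $p+d$ always lie in the same class, the union over $i$ is exactly $\{\alpha_{j,j+d} : 1\le j\le n-d\}$.

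For part (ii), the same residue analysis gives $\fg_1=\bigoplus_{i=1}^d V_i$ with $V_i=\on{Span}\{E_{pq} : p\in C_i,\ q\in C_{i+1}\}$ (indices cyclic mod $d$). A short bracket computation shows each $V_i$ is $G_0$-stable, with the action factoring through $G_i\times G_{i+1}$ and identifying $V_i$ with $\Std_{G_i}\boxtimes\Std^*_{G_{i+1}}$, which is irreducible for general linear groups; as a consistency check, Corollary \ref{c:irredRep} applied to the barycentre $\check{\rho}/d$ (whose $L\simeq G_0$ has semisimple rank $\sum_i(|C_i|-1)=n-d$) predicts exactly $d$ irreducible summands. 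To read off the lowest weight of $V_i$ I would exhibit the lowest weight vector directly: in the ``target'' standard representation of $G_i$ the lowest weight vector is $e_{p^{\max}_i}$, where $p^{\max}_i$ is the largest element of $C_i$, while in the ``source'' the relevant index is the smallest element $i+1$ of $C_{i+1}$ (because that slot is dualized). One then checks that $E_{p^{\max}_i,\,i+1}$ is annihilated by every negative simple root vector of $\prod_j G_j$ — the only potentially nonzero brackets carry Kronecker deltas that vanish because $p^{\max}_i$ is maximal in $C_i$ and $C_i,C_{i+1}$ are disjoint — so its weight $\alpha_{p^{\max}_i,\,i+1}$ is the lowest weight. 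Substituting $p^{\max}_i=i+\tau d$ for $1\le i\le\sigma$, $p^{\max}_i=i+(\tau-1)d$ for $\sigma<i\le d-1$, and $p^{\max}_d=\tau d$ yields the three stated formulas for $\alpha_{(i)}$.

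I expect no conceptual obstacle here; the proposition is essentially a structural computation. The one genuinely error-prone step is the index bookkeeping in the lowest-weight calculation: handling the cyclic wraparound $C_{d+1}=C_1$, keeping straight the case split $i\le\sigma$ versus $i>\sigma$ governing $|C_i|$ and hence $p^{\max}_i$, and confirming that $E_{p^{\max}_i,\,i+1}$ is killed by all the negative simple root vectors. Everything else follows directly from the matrix descriptions of $G_0$ and $\fg_1$.
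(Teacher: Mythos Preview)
Your proposal is correct and follows essentially the same approach as the paper: both argue by direct bookkeeping with the residue classes modulo $d$, identifying $G_0$ as a block product and reading off the lowest weight of each $V_i$ by locating the extreme indices $p^{\max}_i$ and $q^{\min}_{i+1}$. Your added identification $V_i\simeq\Std_{G_i}\boxtimes\Std^*_{G_{i+1}}$ and the consistency check with Corollary~\ref{c:irredRep} are pleasant elaborations, but the underlying argument is the same explicit computation the paper sketches.
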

\begin{proof} Part (i) is easily verified from the definition. For (ii), note that $G_0$ maps the root subspace $\fg_{\alpha_{ij}}$  into direct sum of several $\fg_{\alpha_{i'j'}}$ where $i\equiv i',j\equiv j'\mod d$. It follows that $V_i$ is an $G_0$-submodule of $\fg_1$. Now the weights in $V_i$ are of the form $\alpha_{pq}$, $p\equiv i$ and $q\equiv i+1$ modulo $d$. Thus, the lowest weight is $\alpha_{i+t'd,i+1}$ or $\alpha_{t'd,1}$ for maximal possible $t'$, giving the above formula. 
\end{proof} 

\subsubsection{The functional $\phi_\spp$} Recall the functional $\phi_\spp\in \fg_1^*$ defined in \eqref{eq:phisp}. The above proposition immediately implies a fundamental property of this functional: 
\begin{cor} \label{c:phisp}
The restriction of $\phi_\spp$ to every irreducible constituent $V_i\subseteq V$ is non-zero. 
\end{cor}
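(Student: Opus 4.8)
The plan is to prove the statement by a direct, completely explicit computation: for each index $i\in\{1,\dots,d\}$ I will exhibit a single matrix unit lying in $V_i$ on which $\phi_\spp$ does not vanish. By \S\ref{sss:V} (equivalently Proposition \ref{p:Vinberg's pair}(ii)), the block $V_i$ is spanned by the matrix units $E_{pq}$ with $p\equiv i$ and $q\equiv i+1\pmod d$; and by \eqref{eq:phisp}, for $d\geq 2$ the functional $\phi_\spp$ is the sum of the coordinate functionals $E_{j,c(j)}^*$, where $c$ is the cyclic shift $1\mapsto 2\mapsto\cdots\mapsto d\mapsto 1$ on $\{1,\dots,d\}$ (so that $c(j)=j+1$ for $j<d$ and $c(d)=1$). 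Since $E_{j,c(j)}^*$ takes the value $1$ on $E_{j,c(j)}$ and $0$ on every other matrix unit, it will be enough to check that for each $i$ the matrix unit $E_{i,c(i)}$ actually lies in $V_i$.

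The key step is this bookkeeping check. Fix $i\in\{1,\dots,d\}$. The row index of $E_{i,c(i)}$ is $i\equiv i\pmod d$, and its column index $c(i)$ satisfies $c(i)\equiv i+1\pmod d$ (true both when $c(i)=i+1$ and when $i=d$, $c(i)=1$), hence also $c(i)-i\equiv 1\pmod d$, so $E_{i,c(i)}\in\fg_1$ and in fact $E_{i,c(i)}\in V_i$. (Both indices $i$ and $c(i)$ lie in $\{1,\dots,d\}\subseteq\{1,\dots,n\}$, so this is a genuine matrix unit of $\gl_n$.) Consequently $\phi_\spp(E_{i,c(i)})=1\neq 0$, so $\phi_\spp|_{V_i}\neq 0$; as $i$ was arbitrary, this settles the case $d\geq 2$. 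The degenerate case $d=1$ is elementary and, strictly speaking, not needed (only $d>1$ occurs in \S\ref{s:wildRigidity}): there $\phi_\spp=E_{11}^*$ and $V$ is the adjoint module of $G_0=G=\GL_n$, and $E_{11}^*$ is nonzero already on the centre $\fz$ and on a traceless diagonal element, hence on every nonzero $G$-submodule of $\gl_n$.

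There is no genuine obstacle here --- no analysis and no representation theory beyond the explicit weight-space description already recorded in Proposition \ref{p:Vinberg's pair}. The only point that needs care is aligning the indexing conventions of \eqref{eq:phisp} and \S\ref{sss:V}: one should resist reading the conclusion off the lowest weights $\alpha_{(i)}$ listed in Proposition \ref{p:Vinberg's pair}(ii), since $\phi_\spp$ is generally \emph{not} supported on those lowest-weight lines --- its nonzero coordinates sit on the near-diagonal entries $E_{i,i+1}$ ($1\leq i\leq d-1$) and $E_{d,1}$ instead. Once the correct matrix units are identified, the verification is the one-line congruence check above.
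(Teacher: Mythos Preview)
Your proof is correct and is precisely the explicit verification the paper has in mind when it says the corollary ``immediately'' follows from Proposition~\ref{p:Vinberg's pair}: you locate the matrix unit $E_{i,c(i)}\in V_i$ appearing in \eqref{eq:phisp} and evaluate. There is no difference in approach to report.
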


\subsubsection{Explicit description of $\Theta(x)$} The above proposition also gives a description of the affine roots $\Theta(x)$ associated to $x$ \eqref{eq:Theta}. 
Let $\wt^-(\fg_1)=\{\alpha_{(1)},\alpha_{(2)},...,\alpha_{(d)}\}$. Let $(k_1, ..., k_n)$ be an ordering of $\{1,2,...,n\}$ defined as follows: 
\[
(k_1,k_2,...,k_n) = (1,1+d, 1+2d, ..., 1+\tau d, 2, 2+d, ..., (\tau-1)d, \tau d).
\]
By convention, $k_{n+1}=1$. Recall that $n=\tau d + \sigma$, where $0\leq \sigma\leq d-1$. For $i\in \{1, 2,...,d+1\}$, let 
\begin{equation} \label{ni}
n_i:=\begin{cases}
(i-1)(\tau+1) & 1\leq i\leq \sigma+1;\\
\sigma (\tau+1)+(i-\sigma -1)\tau & \sigma+2\leq i\leq d+1. 
\end{cases}
\end{equation} 
Proposition \ref{p:Vinberg's pair} then implies: 

\begin{lem}\label{l:cycle of simple affine roots} The map $\mathfrak{Fin}$ \eqref{eq:Fin} defines a bijection between 
$\Theta(x)$  and $\{\alpha_{k_i,k_{i+1}}\mid 1\leq i\leq n\}$.
Under this bijection, $\wt^-(V)$ is mapped to $\{\alpha_{k_{n_i}, k_{n_i+1}}\mid 2\leq i\leq d+1\}$. 
\end{lem}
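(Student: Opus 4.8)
The plan is to deduce the lemma from Proposition \ref{p:simple affine roots} together with the explicit combinatorics recorded in Proposition \ref{p:Vinberg's pair}. Since the Coxeter number of $\GL_n$ equals $n$, the element $x=\check\rho/d$ is a barycentre for every $d\in\{1,\dots,n\}$, so Proposition \ref{p:simple affine roots} applies and $\Theta(x)=\Delta(L)\sqcup\wt^-(V)$ is a basis of $\Phi^\aff$; in particular $|\Theta(x)|=n=|\{\alpha_{k_i,k_{i+1}}\mid 1\leq i\leq n\}|$. Under the identifications $L\simeq G_0$, $V\simeq\fg_1$, the map $\mathfrak{Fin}$ carries $\Delta(L)$ bijectively onto $\Delta(G_0)$ and $\wt^-(V)$ bijectively onto $\wt^-(\fg_1)$; moreover $\Phi(G_0)$ and $\Phi(\fg_1)$ are disjoint (they sit in different graded pieces), so $\mathfrak{Fin}|_{\Theta(x)}$ is injective with image $\Delta(G_0)\sqcup\wt^-(\fg_1)$. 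It therefore remains to identify $\Delta(G_0)$ and $\wt^-(\fg_1)$, as subsets of $\Phi$, with the asserted subsets of $\{\alpha_{k_i,k_{i+1}}\}$.

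Next I would unwind the definition of the ordering $(k_1,\dots,k_n)$: it lists the integers $\equiv 1\pmod d$ in increasing order, then those $\equiv 2$, and so on through the multiples of $d$, the cumulative block sizes being exactly the $n_i$ of \eqref{ni}, so that $k_{n_i+1}=i$ for $1\leq i\leq d$ and $k_n=\tau d$ (using the cyclic convention $k_{n+1}=k_1=1$). Within a fixed residue block consecutive entries differ by $d$, so each pair $(k_i,k_{i+1})$ with $i\notin\{n_2,\dots,n_{d+1}\}$ contributes the root $\alpha_{k_i,k_i+d}$; as $i$ ranges over these $n-d$ positions one obtains precisely $\{\alpha_{j,j+d}\mid 1\leq j\leq n-d\}=\Delta(G_0)$ by Proposition \ref{p:Vinberg's pair}(i). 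This disposes of the $\Delta(L)$–part of the bijection and leaves exactly the $d$ ``block-transition'' pairs, indexed by $i=n_2,\dots,n_{d+1}=n$, to be matched with $\wt^-(\fg_1)$.

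Finally I would match these block-transition pairs with $\wt^-(\fg_1)=\{\alpha_{(1)},\dots,\alpha_{(d)}\}$ case by case. For $2\leq i\leq d+1$ the entry $k_{n_i}$ is the last element of the $(i-1)$-st residue block, hence equals $(i-1)+\tau d$ when $i-1\leq\sigma$ and $(i-1)+(\tau-1)d$ when $\sigma+1\leq i-1\leq d-1$, while $k_{n_{d+1}}=k_n=\tau d$ and $k_{n_{d+1}+1}=k_1=1$; comparing with the formula for $\alpha_{(i-1)}$ in Proposition \ref{p:Vinberg's pair}(ii) yields $\alpha_{k_{n_i},k_{n_i+1}}=\alpha_{(i-1)}$ in every case, including the wraparound $\alpha_{k_n,k_1}=\alpha_{\tau d,1}=\alpha_{(d)}$. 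The only genuine work here is this last bookkeeping step — keeping track of the two block lengths ($\tau+1$ for residues $\leq\sigma$, $\tau$ otherwise) and of the cyclic wraparound — but it is entirely elementary once \eqref{ni} is in hand, so I do not expect any conceptual obstacle; the case $d=1$, where $V$ is the adjoint representation, is handled by the same computation.
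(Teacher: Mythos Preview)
Your proof is correct and follows the same approach as the paper, which simply states that the lemma is implied by Proposition~\ref{p:Vinberg's pair}; you have just filled in the bookkeeping details carefully. One very minor point: the parenthetical ``they sit in different graded pieces'' is literally false when $d=1$ (then $\Phi_0=\Phi_1=\Phi$), but this does not affect your argument, since the explicit enumeration of the image already shows it consists of $n$ distinct roots, which together with $|\Theta(x)|=n$ gives the bijection.
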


\subsection{Moving to the fundamental alcove} \label{ss:fundamentalAlcove}  Recall that in Conventions \ref{n:alcove}, we defined a canonical alcove $C_x$ containing $x=\check{\rho}/d$ in the closure. Note that $C_x$ is not, in general, the fundamental alcove. It will be convenient to use the  Iwahori--Weyl group to move $x$ into the closure of the fundamental alcove. 

\subsubsection{}
Let $\tw_d$ be an element of the Iwahori--Weyl group $\tW$ such that $\tw_d C_x$ is the fundamental alcove. Note that $\tw_d$ is unique, up to the stabiliser $\Omega\subset \tW$ of fundamental alcove. In particular, the finite part of $\tw_d$, denoted by $w_x\in W=S_n$, is unique, up to translations by an $n$-cycle. We assume, without the loss of generality, that $w_x$ fixes $1$.  Lemma \ref{l:cycle of simple affine roots} then implies: 

\begin{cor}\label{c:permutation} 
For all $i\in \{1,2,...,n\}$,  $w_x(k_i)=i$. 
\end{cor}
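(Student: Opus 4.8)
The plan is to determine $w_x$ by tracking how the distinguished element $\tw_d\in\tW$ transports simple affine roots. The starting point is that $\tw_d$, being an element of $\tW$ carrying the alcove $C_x$ to the fundamental alcove, carries the associated basis $\Theta(x)=\Delta(L)\sqcup\wt^-(V)$ of $\Phi^\aff$ (which is a basis by Proposition \ref{p:simple affine roots}, since $x=\check{\rho}/d$ is a barycentre for $d\in\{1,\dots,n\}$) bijectively onto the standard basis $\Delta^\aff=\{\alpha_0,\alpha_1,\dots,\alpha_{n-1}\}$. Since translations do not affect the finite part of an affine root, the finite part of $\tw_d\cdot\tilde\alpha$ equals $w_x\cdot\mathfrak{Fin}(\tilde\alpha)$; hence $w_x$ maps $\mathfrak{Fin}(\Theta(x))$ into $\mathfrak{Fin}(\Delta^\aff)$ inside $\Phi$.

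First I would make both sides explicit. By Lemma \ref{l:cycle of simple affine roots}, $\mathfrak{Fin}(\Theta(x))=\{\alpha_{k_i,k_{i+1}}\mid 1\le i\le n\}$, with the convention $k_{n+1}:=k_1=1$. On the other side, since $\theta=\alpha_{1,n}$ the finite part of $\alpha_0=1-\theta$ is $\alpha_{n,1}$, so $\mathfrak{Fin}(\Delta^\aff)=\{\alpha_{j,j+1}\mid 1\le j\le n\}$ with indices read modulo $n$ — that is, exactly the roots $\alpha_{p,q}$ with $q\equiv p+1\pmod n$. Since $w_x\in S_n$ acts on roots by $w_x(\alpha_{p,q})=\alpha_{w_x(p),w_x(q)}$, the membership $w_x(\alpha_{k_i,k_{i+1}})\in\mathfrak{Fin}(\Delta^\aff)$ forces
\[
w_x(k_{i+1})\equiv w_x(k_i)+1\pmod n\qquad\text{for every }i\in\{1,\dots,n\}.
\]
Iterating this relation from $i=1$, and using $k_1=1$ together with the chosen normalisation $w_x(1)=1$, gives $w_x(k_i)\equiv w_x(1)+(i-1)\equiv i\pmod n$ for all $i$; since $1\le i\le n$ this is the desired equality $w_x(k_i)=i$.

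The argument is essentially bookkeeping once Lemma \ref{l:cycle of simple affine roots} is available, so there is no serious obstacle. The one point demanding care is the compatibility statement that $\tw_d$ sends the $C_x$-basis to the fundamental basis and induces $w_x$ on finite parts, which rests on the equivariance of the map (alcove)$\mapsto$(associated basis of $\Phi^\aff$), the semidirect decomposition $\tW\simeq W^\aff\rtimes\Omega$, and the triviality of the translation part on finite roots. The normalisation $w_x(1)=1$ is precisely what removes the residual $\Omega$-ambiguity — whose finite parts are the powers of an $n$-cycle — and thereby pins $w_x$ down to the cyclic shift encoded by the ordering $(k_1,\dots,k_n)$.
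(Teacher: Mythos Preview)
Your argument is correct and is precisely the unpacking of what the paper means by ``Lemma \ref{l:cycle of simple affine roots} then implies'': you use that $\tw_d$ carries the basis $\Theta(x)$ of $C_x$ to $\Delta^\aff$, take finite parts to see that $w_x$ sends each $\alpha_{k_i,k_{i+1}}$ to some $\alpha_{j,j+1}$ (indices mod $n$), deduce the recursion $w_x(k_{i+1})\equiv w_x(k_i)+1\pmod n$, and anchor it with the normalisation $w_x(1)=1$. This is exactly the intended one-line deduction, spelled out in full.
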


\subsubsection{Normalised Kac Coordinates}
Let $x':=\tw_d \cdot x$. By definition, $x'$ is in the closure of the fundamental alcove. 
The following lemma gives the normalised Kac coordinates of $x$:

\begin{lem}\label{Kac coordinates} We have 
$
\alpha_j(x')=
\begin{cases} 
1/d & j=n_i\ \textrm{for some}\ i\in \{2,...,d+1\};\\
0 & \textrm{otherwise}.
\end{cases} 
$
\end{lem}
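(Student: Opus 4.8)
The plan is to reduce the assertion to tracking simple affine roots through the Iwahori--Weyl element $\tw_d$. Since affine roots are affine-linear functionals on $\cA$ and the $\tW$-action satisfies $(\tw\cdot\phi)(\tw\cdot y)=\phi(y)$, one has, for each $j$,
\[
\alpha_j(x')=\alpha_j(\tw_d\cdot x)=(\tw_d^{-1}\alpha_j)(x).
\]
By definition $\tw_d C_x$ is the fundamental alcove (Convention \ref{n:alcove}), and the assignment $C\mapsto\Delta^\aff(C)$ is $\tW$-equivariant; hence $\tw_d$ carries the basis $\Delta^\aff(C_x)=\Theta(x)=\Delta(L)\sqcup\wt^-(V)$ (Proposition \ref{p:simple affine roots}) bijectively onto $\Delta^\aff=\{\alpha_0,\dots,\alpha_{n-1}\}$. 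So for each $j$ the root $\tw_d^{-1}\alpha_j$ lies in $\Theta(x)$, belonging either to $\Delta(L)$ or to $\wt^-(V)$.

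Next I would invoke the displayed identity at the start of the proof of Proposition \ref{p:simple affine roots}: every $\beta\in\Delta(L)$ satisfies $\beta(x)=0$ and every $\theta\in\wt^-(V)$ satisfies $\theta(x)=1/d$. Combining this with the previous step, $\alpha_j(x')=0$ whenever $\tw_d^{-1}\alpha_j\in\Delta(L)$ and $\alpha_j(x')=1/d$ whenever $\tw_d^{-1}\alpha_j\in\wt^-(V)$; in particular $\alpha_j(x')\in\{0,1/d\}$ for all $j$. It then remains only to decide for which $j$ the root $\tw_d^{-1}\alpha_j$ lies in $\wt^-(V)$.

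For this last point I would use finite-part bookkeeping. Writing $w_x=\mathfrak{Fin}(\tw_d)$, Corollary \ref{c:permutation} gives $w_x(k_i)=i$ for all $i$ (with the cyclic convention $k_{n+1}=k_1=1$). Since $\mathfrak{Fin}$ is $W$-equivariant, the element of $\Theta(x)$ with finite part $\alpha_{k_i,k_{i+1}}$ (these are enumerated in Lemma \ref{l:cycle of simple affine roots}) is sent by $\tw_d$ to the simple affine root with finite part $\alpha_{w_x(k_i),w_x(k_{i+1})}=\alpha_{i,i+1}$, indices read modulo $n$. Among $\Delta^\aff$, the root with finite part $\alpha_{i,i+1}$ is $\alpha_i$ for $1\le i\le n-1$, and the one with finite part $\alpha_{n,1}=-\theta$ is $\alpha_0$; so, setting $\alpha_n:=\alpha_0$, the element of $\Theta(x)$ with finite part $\alpha_{k_i,k_{i+1}}$ is mapped by $\tw_d$ to $\alpha_i$. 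By Lemma \ref{l:cycle of simple affine roots}, $\wt^-(V)$ consists exactly of the roots of $\Theta(x)$ with finite part $\alpha_{k_{n_i},k_{n_i+1}}$, $2\le i\le d+1$; hence $\tw_d^{-1}\alpha_j\in\wt^-(V)$ precisely when $j=n_i$ for some $i\in\{2,\dots,d+1\}$ (using $n_{d+1}=n\equiv 0$), which is the claimed formula.

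The one genuinely delicate step is the last paragraph: one must check that among the simple affine roots the finite part determines the root, so that the finite-part identifications of Corollary \ref{c:permutation} and Lemma \ref{l:cycle of simple affine roots} indeed pin down $\tw_d$ wall-by-wall. For $\GL_n$ this is immediate, since the finite parts of $\alpha_0,\dots,\alpha_{n-1}$ are the pairwise distinct roots $\alpha_{n,1},\alpha_{1,2},\dots,\alpha_{n-1,n}$; the only real care needed is the cyclic index convention $\alpha_n=\alpha_0$ that accounts for the value $n_{d+1}=n$ appearing in the statement. A purely computational alternative would be to decompose $\tw_d$ along $\tW\simeq W\ltimes X_*(T)$ and read off $\alpha_j(x')$ directly from the definition of the ordering $(k_1,\dots,k_n)$ and of the $n_i$; this also works, but is messier since it first requires identifying the translation part of $\tw_d$.
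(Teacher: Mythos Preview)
Your proof is correct and follows essentially the same approach as the paper's own proof, which is a one-line invocation of Corollary \ref{c:permutation} and Lemma \ref{l:cycle of simple affine roots} to conclude $\tw_d\cdot\wt^-(V)=\{\alpha_{n_2},\dots,\alpha_{n_{d+1}}\}$. You have simply unpacked the logic that the paper leaves implicit: the reduction $\alpha_j(x')=(\tw_d^{-1}\alpha_j)(x)$, the dichotomy $\Delta(L)$ versus $\wt^-(V)$ via Proposition \ref{p:simple affine roots}, and the finite-part tracking that pins down exactly which $\alpha_j$ land in $\wt^-(V)$.
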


\begin{proof} 
By Corollary \ref{c:permutation} and Lemma \ref{l:cycle of simple affine roots}, we have  $\tw_d. \wt^-(V) = \{\alpha_{n_2},..., \alpha_{n_{d+1}}\}\subset \Delta^\aff(G)$ where, by convention, $\alpha_{n_{d+1}}=\alpha_n=\alpha_0=1+\alpha_{n,1}$. 
\end{proof} 

\subsubsection{The associate Vinberg pair} Let $(G_0', \fg_1')$ denote the Vinberg pair associated to $x'$.  Proposition \ref{p:Vinberg's pair} implies that $G_0'$ is the Levi subgroup of $G=\GL_n$ given by 
\begin{equation}\label{eq:G_0'}
G_0'=\GL_{\tau+1} \times ... \times \GL_{\tau+1} \times \GL_\tau \times ...\times \GL_\tau.
\end{equation} 
Here, there are $\sigma$ many $GL_{\tau+1}$'s and $d-\sigma$ many $GL_\tau$'s. Proposition \ref{p:Vinberg's pair} also implies that $\fg_1'\subset \fg$ is given by 
\[
\fg_1'=
\begin{pmatrix}
0&\on{Mat}_{\tau+1,\tau+1}&      &              &            &          &      &\\
&\ddots        &\ddots&              &            &          &      &\\
&              &0     &\on{Mat}_{\tau+1,\tau+1}&            &          &      &\\
&              &      &0             &\on{Mat}_{\tau+1,\tau}&          &      &\\
&              &      &              &0           &\on{Mat}_{\tau,\tau}&      &\\
&              &      &              &            &\ddots    &\ddots&\\
&              &      &              &            &          &  0&\on{Mat}_{\tau,\tau}\\
 \on{Mat}_{\tau,\tau+1}&   &              &            &          &      &      &0
\end{pmatrix}
\]

\subsubsection{The character $\phi_\spp'$} Recall the functional $\phi_\spp \in \fg_1^*$ defined by \eqref{eq:phisp}. Conjugating by $\tw_d$, we obtain the functional on $\fg_1'$ given by 
\begin{equation}\label{eq:phisp'}
\phi_\spp'=E_{1+n_1,1+n_2}^*+E_{1+n_2,1+n_3}^*+\cdots+E_{1+n_{d-1},1+n_d}^*+E_{1+n_d,1+n_1}^*.
\end{equation}
Note that the stabiliser of $\phi_\spp'$ (under the action of $G_0'$ on $(\fg_1')^*$), is $ZL'_{\phi'}$, where $Z$ is the centre and $L'_{\phi'}\subset G_0'$ is the block-diagonal subgroup defined by 
\begin{equation} \label{eq:Lphi'}
L'_{\phi'} :=\mathrm{diag}
\left(
\begin{pmatrix}
1 &       \\
&GL_\tau(k)
\end{pmatrix},
\cdots,
\begin{pmatrix}
1 &       \\
&GL_\tau(k)
\end{pmatrix},
\begin{pmatrix}
1 &       \\
&GL_{\tau-1}(k)
\end{pmatrix},
\cdots,
\begin{pmatrix}
1 &       \\
&GL_{\tau-1}(k)
\end{pmatrix}
\right).
\end{equation} 
Here, there are $\sigma$ many $\GL_\tau$ and $d-\sigma$ many $\GL_{\tau-1}$. 

\subsubsection{Subgroups of the positive loop group} Associated to $x'$, we have the parahoric $P'\subset G(\cO)$ and the corresponding affine Vinberg pair  
\[
(L',V'):=(P'/P'(1), P'(1)/P'(2))\simeq (G_0',\fg_1').
\]
The functional $\phi_\spp'$ defines a character $P'(1)$. 
Let $B_{\phi'}$ denote the subgroup of $L'_{\phi'}$ consisting of upper triangular matrices.
Let $J'$ be the subgroup $P'$ defined by 
$J':=B_{\phi'} \ltimes P'(1)$. 
\begin{lem} \label{l:numericalJ}
We have  $\dim(G(\cO)/J')= \dim(B)$. 
\end{lem}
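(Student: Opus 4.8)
The plan is to compute $\dim(G(\cO)/J')$ by breaking the quotient into three successive pieces along the Moy--Prasad filtration: $G(\cO)/P'$, then $P'/P'(1)$ modulo the image of $B_{\phi'}$, then $P'(1)/J'(1)$ where $J'(1)=J'\cap P'(1)=P'(1)$. More precisely, since $J'=B_{\phi'}\ltimes P'(1)$ with $B_{\phi'}\subseteq L'=P'/P'(1)$, we have a chain $J'\subseteq P'\subseteq G(\cO)$ and hence
\[
\dim\bigl(G(\cO)/J'\bigr)=\dim\bigl(G(\cO)/P'\bigr)+\dim\bigl(P'/J'\bigr)=\dim\bigl(G(\cO)/P'\bigr)+\dim\bigl(L'/B_{\phi'}\bigr).
\]
So the task reduces to two finite-dimensional computations: the dimension of the partial affine flag variety $G(\cO)/P'$ (equivalently, the length/codimension data of the facet $x'$), and the dimension of $L'/B_{\phi'}$.

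First I would compute $\dim(L'/B_{\phi'})$. By \eqref{eq:G_0'}, $L'\simeq G_0'$ is the product of $\sigma$ copies of $\GL_{\tau+1}$ and $d-\sigma$ copies of $\GL_\tau$, and by \eqref{eq:Lphi'} together with $B_{\phi'}=$ upper-triangular matrices in $L'_{\phi'}$, the subgroup $B_{\phi'}$ is, in each $\GL_{\tau+1}$-factor, the group $\mathrm{diag}(1, B_\tau)$ with $B_\tau$ the Borel of $\GL_\tau$, and similarly $\mathrm{diag}(1,B_{\tau-1})$ in each $\GL_\tau$-factor. Thus in a $\GL_{\tau+1}$-factor, $\dim(\GL_{\tau+1}/B_{\phi'})=(\tau+1)^2-\bigl(\tau+\binom{\tau+1}{2}\bigr)$, and in a $\GL_\tau$-factor, $\dim(\GL_\tau/B_{\phi'})=\tau^2-\bigl((\tau-1)+\binom{\tau}{2}\bigr)$; summing over all $d$ factors gives a clean polynomial in $\tau,\sigma,d$. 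Next I would compute $\dim(G(\cO)/P')$ using Lemma \ref{Kac coordinates}: the normalised Kac coordinates of $x'$ have a $1$ in exactly the $d$ positions $j=n_2,\dots,n_{d+1}$ (mod $n$) and $0$ elsewhere, so the parahoric $P'$ is the one whose reductive quotient is $G_0'$, and $\dim(G(\cO)/P')=\dim(G)-\dim(P'/P'(1))=n^2-\dim(G_0')$ adjusted by the contribution of the positive-loop-direction root spaces that are \emph{not} in $\fp'$ — equivalently $\dim(G(\cO)/P')$ equals the number of affine roots $\alpha$ with $0<\alpha(x')<1$ weighted appropriately, which by the Kac-coordinate description is $\sum_{\alpha\in\Phi,\ \mathrm{ht}(\alpha)\not\equiv 0\bmod d}($count of lifts in the window$)$. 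A cleaner route: $\dim(G(\cO)/P')=\tfrac12(\dim G - \dim L')= \tfrac12(n^2-\dim G_0')$, since $G(\cO)/P'$ is a finite flag variety for the loop group whose tangent space is $\fg^{\aff}/\fp'$ and the complementary $\fp'^{-}/\mathfrak{l}'$ has the same dimension. Plugging $\dim G_0'=\sigma(\tau+1)^2+(d-\sigma)\tau^2$ gives $\dim(G(\cO)/P')=\tfrac12\bigl(n^2-\sigma(\tau+1)^2-(d-\sigma)\tau^2\bigr)$.

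Adding the two contributions, I expect everything to telescope: the half of the "off-diagonal" part of $\mathfrak{gl}_n$ coming from $G(\cO)/P'$ together with the "$L'/B_{\phi'}$" part should assemble into exactly $\dim B=\binom{n+1}{2}=\tfrac{n(n+1)}2$. I would verify this by direct substitution of $n=\tau d+\sigma$ and simplifying the resulting polynomial in $\tau,\sigma,d$; as a sanity check I would also verify the two extreme cases $d=n$ (so $\tau=1,\sigma=0$, $L'=T$, $B_{\phi'}=Z$, $P'=I$, recovering $\dim(G(\cO)/I(1))=\dim(\GL_n)=\dim B + \dim(G/B)$... and indeed the Iwahori case of \cite{HNY} gives $\dim(G(\cO)/I^{+})=\dim B + (\dim G - \dim B) = \dim G$, consistent once one notes $J'=I(1)$ has codimension $\dim G$ there — wait, but the claim is $\dim B$, so I must double-check that in the $d=n$ case $J'=B_{\phi}\ltimes P(1)$ with $B_\phi$ \emph{trivial}, hence $\dim(G(\cO)/J')=\dim(G(\cO)/I(1))=\dim(\GL_n)$; this forces me to re-examine whether the lemma's $J'$ is conjugate to the $J$ of Definition \ref{d:wilddata} with centre removed, in which case the correct count is $\dim B$ precisely because the numerical requirement \eqref{eq:numerical} demands $\dim(G(\cO_0)/I^{\opp})+\dim(G(\cO_\infty)/J')=\dim G$ and $\dim(G(\cO_0)/I^{\opp})=\dim(G/B)=\dim\mathfrak{n}$, forcing $\dim(G(\cO_\infty)/J')=\dim G-\dim\mathfrak n=\dim B$).

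The main obstacle will be bookkeeping the Moy--Prasad/parahoric dimension count correctly — in particular getting $\dim(G(\cO)/P')$ right when the facet $x'$ is not in the interior of the fundamental alcove and $P'$ is a genuinely non-Iwahori, non-maximal parahoric, and then checking that removing the centre in passing from $L'_{\phi'}$ to $B_{\phi'}$ (versus $Z L'_{\phi'}$) shifts the count by exactly the right amount so that the final answer is $\dim B$ and not $\dim B \pm 1$. Once the dimension of the affine flag variety $G(\cO)/P'$ is pinned down via Lemma \ref{Kac coordinates} and the half-dimension principle, and $\dim(L'/B_{\phi'})$ is computed factor-by-factor from \eqref{eq:G_0'} and \eqref{eq:Lphi'}, the identity $\dim(G(\cO)/J')=\dim B$ follows by a short algebraic simplification, and I would present it as such, relegating the polynomial identity in $\tau,\sigma,d$ to a one-line check.
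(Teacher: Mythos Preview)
Your main decomposition
\[
\dim\bigl(G(\cO)/J'\bigr)=\dim\bigl(G(\cO)/P'\bigr)+\dim\bigl(L'/B_{\phi'}\bigr)
\]
is correct, and together with your formula $\dim(G(\cO)/P')=\tfrac12(\dim G-\dim L')$ it does yield $\dim B$ after the polynomial bookkeeping. The paper slices the same filtration one step lower, at $P'(1)$ rather than $P'$: it writes $\dim(G(\cO)/J')=\dim(G(\cO)/P'(1))-\dim(B_{\phi'})$, observes that $\dim(G(\cO)/P'(1))=\dim B+\dim U_{L'}$, and is thereby reduced to the single identity $\dim U_{L'}=\dim B_{\phi'}$, which is immediate from \eqref{eq:G_0'} and \eqref{eq:Lphi'} (the Borel of $\GL_\tau$ has the same dimension as the unipotent radical of the Borel of $\GL_{\tau+1}$). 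So the two arguments are equivalent; the paper's packaging just avoids the polynomial substitution you anticipate.

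Your sanity check at $d=n$ contains a genuine miscount that is worth correcting, since it is the source of the confusion in that paragraph. When $d=n$ one has $\tau=1$, $\sigma=0$, $L'=T$, and $L'_{\phi'}=\{1\}$ (not $Z$), so $B_{\phi'}=\{1\}$ and $J'=I(1)$. Then $G(\cO)/I(1)\simeq G(k)/U(k)$ because $I(1)$ is the preimage of $U$ under $G(\cO)\twoheadrightarrow G(k)$, hence $\dim(G(\cO)/I(1))=\dim G-\dim U=\dim B$, exactly as the lemma claims. Your figure of $\dim G$ for this quotient is off by $\dim U$; once corrected there is no tension with \eqref{eq:numerical}, and no $\pm 1$ issue with the centre arises either, since $B_{\phi'}$ already excludes the centre by definition.
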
 
 
\begin{proof} Observe that 
\[
\dim(G(O)/J')=\dim(G(O)/P'(1))-\dim (B_{\phi'})=\dim (B)+\dim (U_{L'})-\dim (B_{\phi'}), 
\]
where $U_{L'}$ is the subgroup of $L'$ consisting of unipotent upper triangular matrices.
Thus, the lemma amounts to the statement  $\dim(B_{\phi'})=\dim(U_{L'})$. This follows immediately from the explicit description of $L'\simeq G_0'$ \eqref{eq:G_0'} and $L'_{\phi'}$ \eqref{eq:Lphi'}. 
\end{proof}


\begin{bibdiv}
\begin{biblist}

\bib{Abe}{article}
{
	AUTHOR = {Abe, Tomoyuki},
	TITLE = {Langlands correspondence for isocrystals and the existence of
		crystalline companions for curves},
	JOURNAL = {J. Amer. Math. Soc.},
	FJOURNAL = {Journal of the American Mathematical Society},
	VOLUME = {31},
	YEAR = {2018},
	NUMBER = {4},
	PAGES = {921--1057},
	ISSN = {0894-0347},
}

\bib{AndrewsAskeyRoy}{book}
{
	AUTHOR = {Andrews, G. E.},
	Author = {Askey, Richard},
	Author = {Roy, Ranjan},
	TITLE = {Special functions},
	SERIES = {Encyclopaedia of Mathematics and its Applications},
	VOLUME = {71},
	PUBLISHER = {Cambridge University Press, Cambridge},
	YEAR = {1999},
	PAGES = {xvi+664},
}

\bib{Arinkin}{article}
{
	AUTHOR = {Arinkin, D.},
	TITLE = {Orthogonality of natural sheaves on moduli stacks of {$\rm
			SL(2)$}-bundles with connections on {$\Bbb P^1$} minus 4
		points},
	JOURNAL = {Selecta Math. (N.S.)},
	FJOURNAL = {Selecta Mathematica. New Series},
	VOLUME = {7},
	YEAR = {2001},
	NUMBER = {2},
	PAGES = {213--239},
	ISSN = {1022-1824},
}

\bib{ArinkinOper}{article}
{
	Author={Arinkin, D.},
	title={Irreducible connections admit generic oper structures},
	year={2016},
	eprint={https://arxiv.org/abs/1602.08989},
	archivePrefix={arXiv},
	primaryClass={math.AG}
}

\bib{ArinkinFedorov}{article}
{
	AUTHOR = {Arinkin, D.},
	Author = {Fedorov, R.},
	TITLE = {An example of the {L}anglands correspondence for irregular
		rank two connections on {$\Bbb P^1$}},
	JOURNAL = {Adv. Math.},
	FJOURNAL = {Advances in Mathematics},
	VOLUME = {230},
	YEAR = {2012},
	NUMBER = {3},
	PAGES = {1078--1123},
	ISSN = {0001-8708},
}

\bib{ArinkinGaitsgory}{article}
{
	AUTHOR = {Arinkin, D.}
	Author = {Gaitsgory, D.},
	TITLE = {Singular support of coherent sheaves and the geometric
		{L}anglands conjecture},
	JOURNAL = {Selecta Math. (N.S.)},
	FJOURNAL = {Selecta Mathematica. New Series},
	VOLUME = {21},
	YEAR = {2015},
	NUMBER = {1},
	PAGES = {1--199},
}

\bib{AB}{article}
{
	AUTHOR = {Arkhipov, S.},
	Author = {Bezrukavnikov, R.},
	TITLE = {Perverse sheaves on affine flags and {L}anglands dual group},
	NOTE = {With an appendix by Bezrukavnikov and Ivan Mirkovi\'{c}},
	JOURNAL = {Israel J. Math.},
	FJOURNAL = {Israel Journal of Mathematics},
	VOLUME = {170},
	YEAR = {2009},
	PAGES = {135--183},
	ISSN = {0021-2172},
}

\bib{BKV}{article}
{
	AUTHOR = {Baraglia, D.},
	Author = {Kamgarpour, M.},
	Author = {Varma, R.},
	TITLE = {Complete integrability of the parahoric {H}itchin system},
	JOURNAL = {Int. Math. Res. Not. IMRN},
	FJOURNAL = {International Mathematics Research Notices. IMRN},
	YEAR = {2019},
	NUMBER = {21},
	PAGES = {6499--6528},
}

\bib{BL}{article}
{
	AUTHOR = {Beauville, A.},
	Author = {Laszlo, Y.},
	TITLE = {Un lemme de descente},
	JOURNAL = {C. R. Acad. Sci. Paris S\'{e}r. I Math.},
	FJOURNAL = {Comptes Rendus de l'Acad\'{e}mie des Sciences. S\'{e}rie I. Math\'{e}matique},
	VOLUME = {320},
	YEAR = {1995},
	NUMBER = {3},
	PAGES = {335--340},
}

\bib{BD}{article}
{
	AUTHOR = {Beilinson, A.},
	Author = {Drinfeld, V.},
	TITLE  = {Quantization of Hitchin's integrable system and Hecke eigensheaves},
	Note = {\url{https://www.math.uchicago.edu/\textasciitilde mitya/langlands/hitchin/BD-hitchin.pdf}},
	Year={1997},
}

\bib{BenZviNadler}{article}
{
	AUTHOR = {Ben-Zvi, D.},
	Author = {Nadler, D.},
	TITLE = {Betti geometric {L}anglands},
	BOOKTITLE = {Algebraic geometry: {S}alt {L}ake {C}ity 2015},
	SERIES = {Proc. Sympos. Pure Math.},
	VOLUME = {97},
	PAGES = {3--41},
	PUBLISHER = {Amer. Math. Soc., Providence, RI},
	YEAR = {2018},
}

\bib{BZ}{article}
{
	AUTHOR = {Bernstein, I. N.},
	Author = {Zelevinsky, A. V.},
	TITLE = {Induced representations of reductive {${\germ p}$}-adic
		groups. {I}},
	JOURNAL = {Ann. Sci. \'{E}cole Norm. Sup. (4)},
	FJOURNAL = {Annales Scientifiques de l'\'{E}cole Normale Sup\'{e}rieure. Quatri\`eme
		S\'{e}rie},
	VOLUME = {10},
	YEAR = {1977},
	NUMBER = {4},
	PAGES = {441--472},
}

\bib{BCM}{article} 
{
	AUTHOR = {Beukers, F.},
	Author = {Cohen, H.},
	Author = {Mellit, A.},
	TITLE = {Finite hypergeometric functions},
	JOURNAL = {Pure Appl. Math. Q.},
	FJOURNAL = {Pure and Applied Mathematics Quarterly},
	VOLUME = {11},
	YEAR = {2015},
	NUMBER = {4},
	PAGES = {559--589},
}

\bib{Bezrukavnikov}{article}
{
	AUTHOR = {Bezrukavnikov, R.},
	TITLE = {On two geometric realizations of an affine {H}ecke algebra},
	JOURNAL = {Publ. Math. Inst. Hautes \'{E}tudes Sci.},
	FJOURNAL = {Publications Math\'{e}matiques. Institut de Hautes \'{E}tudes
		Scientifiques},
	VOLUME = {123},
	YEAR = {2016},
	PAGES = {1--67},
	ISSN = {0073-8301},
}

\bib{DeBos}{article}
{	
	Author={Bos, Niels uit de},
	title={An explicit geometric Langlands correspondence for the projective line minus four points},
	year={2019},
	eprint={https://arxiv.org/abs/1906.03240},
	archivePrefix={arXiv},
	primaryClass={math.AG}
}

\bib{Chen}{article} 
{ 
	Author={Chen, T-H.}, 
	title = {Vinberg's $\theta$-groups and rigid connections}, 
	Journal={IMRN}, 
	Issue = {23},
	Year={2017},
	Pages = {7321--7343},
}

\bib{CortiGolyshev}{article} 
{
	AUTHOR = {Corti, A.},
	Author = {Golyshev, V.},
	TITLE = {Hypergeometric equations and weighted projective spaces},
	JOURNAL = {Sci. China Math.},
	FJOURNAL = {Science China. Mathematics},
	VOLUME = {54},
	YEAR = {2011},
	NUMBER = {8},
	PAGES = {1577--1590},
}

\bib{CGP}{article} 
{
	AUTHOR = {Chernousov, V.},
	Author = {Gille, P.},
	Author = {Pianzola, A.},
	TITLE = {Torsors over the punctured affine line},
	JOURNAL = {Amer. J. Math.},
	FJOURNAL = {American Journal of Mathematics},
	VOLUME = {134},
	YEAR = {2012},
	NUMBER = {6},
	PAGES = {1541--1583},
}

\bib{Deligne}{book}
{
	AUTHOR = {Deligne, P.},
	TITLE = {Cohomologie \'{e}tale},
	SERIES = {Lecture Notes in Mathematics},
	VOLUME = {569},
	NOTE = {S\'{e}minaire de g\'{e}om\'{e}trie alg\'{e}brique du Bois-Marie SGA
		$4\frac{1}{2}$},
	PUBLISHER = {Springer-Verlag, Berlin},
	YEAR = {1977},
	PAGES = {iv+312},
}

\bib{DembeleWadim}{article}
{
	Author={Dembélé, L.},
	Author={Panchishkin, A.},
	Author={Voight, J.},
	Author={Zudilin, W.},
	Title={Special hypergeometric motives and their $L$-functions: Asai recognition},
	year={2019},
	eprint={https://arxiv.org/abs/1906.07384},
	archivePrefix={arXiv},
	primaryClass={math.NT}
}

\bib{DP09}{incollection}
{
	AUTHOR = {Donagi, R.},
	Author = {Pantev, T.},
	TITLE = {Geometric {L}anglands and non-abelian {H}odge theory},
	BOOKTITLE = {Surveys in differential geometry. {V}ol. {XIII}. {G}eometry,
		analysis, and algebraic geometry: forty years of the {J}ournal
		of {D}ifferential {G}eometry},
	SERIES = {Surv. Differ. Geom.},
	VOLUME = {13},
	PAGES = {85--116},
	PUBLISHER = {Int. Press, Somerville, MA},
	YEAR = {2009},
}

\bib{DPHitchin}{article}
{
	AUTHOR = {Donagi, R.},
	Author = {Pantev, T.},
	TITLE = {Langlands duality for {H}itchin systems},
	JOURNAL = {Invent. Math.},
	FJOURNAL = {Inventiones Mathematicae},
	VOLUME = {189},
	YEAR = {2012},
	NUMBER = {3},
	PAGES = {653--735},
}

\bib{DP19}{article}
{   
	Author={Donagi, R.},
	Author={Pantev, T.},
	title={Parabolic Hecke eigensheaves},
	year={2019},
	eprint={https://arxiv.org/abs/1910.02357},
	archivePrefix={arXiv},
	primaryClass={math.AG}
}

\bib{Drinfeld83}{article}
{
	AUTHOR = {Drinfel\cprime d, V. G.},
	TITLE = {Two-dimensional {$l$}-adic     representations of the fundamental
		group of a curve over a finite field and automorphic forms on
		{${\rm GL}(2)$}},
	JOURNAL = {Amer. J. Math.},
	FJOURNAL = {American Journal of Mathematics},
	VOLUME = {105},
	YEAR = {1983},
	NUMBER = {1},
	PAGES = {85--114},
}

\bib{Drinfeld87}{article}
{
	AUTHOR = {Drinfel\cprime d, V. G.},
	TITLE = {Two-dimensional $\ell$-adic representations of the Galois group of a global field of
characteristic $p$ and automorphic forms on $\mathrm{GL}(2)$},
	JOURNAL = {J. Sov. Math.},
	VOLUME = {36},
	YEAR = {1987},
	PAGES = {93–105},
}

\bib{Faltings}{article}
{
	AUTHOR = {Faltings, G.},
	TITLE = {Stable {$G$}-bundles and projective connections},
	JOURNAL = {J. Algebraic Geom.},
	FJOURNAL = {Journal of Algebraic Geometry},
	VOLUME = {2},
	YEAR = {1993},
	NUMBER = {3},
	PAGES = {507--568},
}

\bib{Frenkel}{article}
{
	AUTHOR = {Frenkel, E.},
	TITLE = {Recent advances in the {L}anglands program},
	JOURNAL = {Bull. Amer. Math. Soc. (N.S.)},
	FJOURNAL = {American Mathematical Society. Bulletin. New Series},
	VOLUME = {41},
	YEAR = {2004},
	NUMBER = {2},
	PAGES = {151--184},
}

\bib{FrenkelRamified}{incollection}
{
	AUTHOR = {Frenkel, E.},
	TITLE = {Ramifications of the geometric {L}anglands program},
	BOOKTITLE = {Representation theory and complex analysis},
	SERIES = {Lecture Notes in Math.},
	VOLUME = {1931},
	PAGES = {51--135},
	PUBLISHER = {Springer, Berlin},
	YEAR = {2008},
}

\bib{FrenkelBenZvi}{book}
{
	AUTHOR = {Frenkel, E.},
	Author = {Ben-Zvi, D.},
	TITLE = {Vertex algebras and algebraic curves},
	SERIES = {Mathematical Surveys and Monographs},
	VOLUME = {88},
	EDITION = {Second},
	PUBLISHER = {American Mathematical Society, Providence, RI},
	YEAR = {2004},
	PAGES = {xiv+400},
}

\bib{FFL}{article}
{
	AUTHOR = {Feigin, B.},
	Author = {Frenkel, E.},
	Author = {Toledano Laredo, V.},
	TITLE = {Gaudin models with irregular singularities},
	JOURNAL = {Adv. Math.},
	FJOURNAL = {Advances in Mathematics},
	VOLUME = {223},
	YEAR = {2010},
	NUMBER = {3},
	PAGES = {873--948},
}

\bib{FrenkelGaitsgory}{incollection}
{
	AUTHOR = {Frenkel, E.},
	Author = {Gaitsgory, D.},
	TITLE = {Local geometric {L}anglands correspondence and affine
		{K}ac-{M}oody algebras},
	BOOKTITLE = {Algebraic geometry and number theory},
	SERIES = {Progr. Math.},
	VOLUME = {253},
	PAGES = {69--260},
	PUBLISHER = {Birkh\"{a}user Boston, Boston, MA},
	YEAR = {2006},
}

\bib{FG}{article} 
{
    AUTHOR = {Frenkel, E.}
    Author= {Gross, B.},
    TITLE = {A rigid irregular connection on the projective line},
    JOURNAL = {Ann. of Math. (2)},
    VOLUME = {170},
    YEAR = {2009},
    NUMBER = {3},
    PAGES = {1469--1512},
}

\bib{FGV}{article}
{
	AUTHOR = {Frenkel, E.},
	Author = {Gaitsgory, D.},
	Author = {Vilonen, K.},
	TITLE = {On the geometric {L}anglands conjecture},
	JOURNAL = {J. Amer. Math. Soc.},
	FJOURNAL = {Journal of the American Mathematical Society},
	VOLUME = {15},
	YEAR = {2002},
	NUMBER = {2},
	PAGES = {367--417},
}

\bib{FJ}{article}
{
	AUTHOR = {Fres\'{a}n, X.},
	Author = {Jossen, P.},
	TITLE = {Exponential motives},
	eprint = {http://javier.fresan.perso.math.cnrs.fr/expmot.pdf},
	YEAR = {2020},
}

\bib{Gaitsgory}{article}
{
	AUTHOR = {Gaitsgory, D.},
	TITLE = {On a vanishing conjecture appearing in the geometric {L}anglands correspondence},
	JOURNAL = {Ann. of Math. (2)},
	FJOURNAL = {Annals of Mathematics. Second Series},
	VOLUME = {160},
	YEAR = {2004},
	NUMBER = {2},
	PAGES = {617--682},
}

\bib{GaitsgorydeJong}{article}
{
	AUTHOR = {Gaitsgory, D.},
	TITLE = {On de {J}ong's conjecture},
	JOURNAL = {Israel J. Math.},
	FJOURNAL = {Israel Journal of Mathematics},
	VOLUME = {157},
	YEAR = {2007},
	PAGES = {155--191},
}

\bib{GaitsgoryRecent}{incollection}
{
	AUTHOR = {Gaitsgory, D.},
	TITLE = {Progr\`es r\'{e}cents dans la th\'{e}orie de {L}anglands g\'{e}om\'{e}trique},
	NOTE = {S\'{e}minaire Bourbaki. Vol. 2015/2016. Expos\'{e}s 1104--1119},
	JOURNAL = {Ast\'{e}risque},
	FJOURNAL = {Ast\'{e}risque},
	NUMBER = {390},
	YEAR = {2017},
	PAGES = {Exp. No. 1109, 139--168},
}

\bib{Ginzburg}{article}
{
	author={Ginzburg, V.},
	title={Perverse sheaves on a Loop group and Langlands' duality},
	year={1995},
	eprint={https://arxiv.org/pdf/alg-geom/9511007.pdf},
}

\bib{GorbounovSmirnov}{article}
{
	AUTHOR = {Gorbounov, V.},
	Author = {Smirnov, M.},
	TITLE = {Some remarks on {L}andau-{G}inzburg potentials for
		odd-dimensional quadrics},
	JOURNAL = {Glasg. Math. J.},
	FJOURNAL = {Glasgow Mathematical Journal},
	VOLUME = {57},
	YEAR = {2015},
	NUMBER = {3},
	PAGES = {481--507},
}

\bib{Greene}{article}
{
	AUTHOR = {Greene, J.},
	TITLE = {Hypergeometric functions over finite fields},
	JOURNAL = {Trans. Amer. Math. Soc.},
	FJOURNAL = {Transactions of the American Mathematical Society},
	VOLUME = {301},
	YEAR = {1987},
	NUMBER = {1},
	PAGES = {77--101},
}

\bib{Gross}{article}
{
	AUTHOR = {Gross, B.},
	TITLE = {Irreducible cuspidal representations with prescribed local
		behavior},
	JOURNAL = {Amer. J. Math.},
	FJOURNAL = {American Journal of Mathematics},
	VOLUME = {133},
	YEAR = {2011},
	NUMBER = {5},
	PAGES = {1231--1258},
}

\bib{GrossReeder}{article}
  {
   Author = {Gross, B.},
    AUTHOR = {Reeder, M.},
          TITLE = {Arithmetic invariants of discrete Langlands parameters},
   JOURNAL = {Duke Math. J.},
      YEAR = {2010},
    NUMBER = {154},
     PAGES = {431--508},

}

\bib{WittenGukov}{incollection}
{
	AUTHOR = {Gukov, S.},
	Author = {Witten, E.},
	TITLE = {Gauge theory, ramification, and the geometric {L}anglands
		program},
	BOOKTITLE = {Current Developments in Mathematics, 2006},
	PAGES = {35--180},
	PUBLISHER = {Int. Press, Somerville, MA},
	YEAR = {2008},
}

\bib{Guignard}{article}
{
	AUTHOR = {Guignard, Q.},
	TITLE = {On the ramified class field theory of relative curves},
	JOURNAL = {Algebra Number Theory},
	FJOURNAL = {Algebra \& Number Theory},
	VOLUME = {13},
	YEAR = {2019},
	NUMBER = {6},
	PAGES = {1299--1326},
}

\bib{PappasRapoport}{article}
{
	AUTHOR = {Haines, T.},
	Author = {Rapoport, M.},
	TITLE = {On parahoric subgroups},
	NOTE = {Appendix to Pappas and Rapoport's \emph{Twisted loop groups and their affine flag varieties}},
	JOURNAL = {Adv. Math.},
	FJOURNAL = {Advances in Mathematics},
	VOLUME = {219},
	YEAR = {2008},
	NUMBER = {1},
	PAGES = {188--198},
}

\bib{Heinloth}{article}
{
	AUTHOR = {Heinloth, J.},
	TITLE = {Coherent sheaves with parabolic structure and construction of
		{H}ecke eigensheaves for some ramified local systems},
	JOURNAL = {Ann. Inst. Fourier (Grenoble)},
	FJOURNAL = {Universit\'{e} de Grenoble. Annales de l'Institut Fourier},
	VOLUME = {54},
	YEAR = {2004},
	NUMBER = {7},
	PAGES = {2235--2325 (2005)},
}

\bib{HeinlothUniformisation}{article} 
{
AUTHOR = {Heinloth, J.},
	TITLE = {Uniformization of $\cG$-bundles},
	JOURNAL = {Math. Ann.},
	VOLUME = {347},
	YEAR = {2010},
	PAGES = {499--528},
}

\bib{HNY}{article} 
{
    Author={Heinloth, J.},
    Author={Ng\^{o}, B. C.},
    Author={Yun, Z.},
    Title={Kloosterman sheaves for reductive groups}, 
    Year={2013}, 
    Journal={Ann. of Math. (2)},
    Volume={177},
    Pages={241--310},
}

\bib{Horja}{article}
{
	AUTHOR = {Horja, R. P.},
	TITLE  = {Hypergeometric functions and mirror    
		symmetry in toric varieties},
	NOTE = {Thesis (Ph.D.)--Duke University},
	eprint={https://arxiv.org/abs/math/9912109},
	YEAR = {1999},
	PAGES = {120},
}

\bib{JY}{article}
{
Author={Jakob, K.}, 
Author={Yun, Z.}, 
Title={Quasi-epipelagic representations and rigid automorphic data}, 
Year={2020},
Journal={In preparation},
}

\bib{Masoud}{article}
{
	AUTHOR = {Kamgarpour, M.},
	TITLE = {Stacky abelianization of algebraic groups},
	JOURNAL = {Transform. Groups},
	FJOURNAL = {Transformation Groups},
	VOLUME = {14},
	YEAR = {2009},
	NUMBER = {4},
	PAGES = {825--846},
}

\bib{KS2}{article}
{
	AUTHOR = {Kamgarpour, M.}
	Author = {Schedler, T.},
	TITLE = {Ramified {S}atake isomorphisms for strongly parabolic
		characters},
	JOURNAL = {Doc. Math.},
	FJOURNAL = {Documenta Mathematica},
	VOLUME = {18},
	YEAR = {2013},
	PAGES = {1275--1300},
}

\bib{KS}{article}
{
	AUTHOR = {Kamgarpour, M.},
	Author = {Schedler, T.},
	TITLE = {Geometrization of principal series representations of
		reductive groups},
	JOURNAL = {Ann. Inst. Fourier (Grenoble)},
	FJOURNAL = {Universit\'{e} de Grenoble. Annales de l'Institut Fourier},
	VOLUME = {65},
	YEAR = {2015},
	NUMBER = {5},
	PAGES = {2273--2330},
}

\bib{KamgarpourSage0}{article} 
{
	Author={Kamgarpour, M.},
	Author = {Sage, D. S.},
	TITLE = {A geometric analogue of a conjecture of {G}ross and {R}eeder},
	JOURNAL = {Amer. J. Math.},
	FJOURNAL = {American Journal of Mathematics},
	VOLUME = {141},
	YEAR = {2019},
	NUMBER = {5},
	PAGES = {1457--1476},
}

\bib{KamgarpourSage}{article} 
{
	Author={Kamgarpour, M.},
	Author = {Sage, D. S.},
	title={Rigid connections on $\mathbb{P}^1$ via the Bruhat-Tits building},
	year={2019},
	Journal={to appear in Proceedings of the London Maths. Society},
	eprint={https://arxiv.org/abs/1910.00181},
}

\bib{WittenKapustin}{article}
{
	AUTHOR = {Kapustin, A.},
	Author = {Witten, E.},
	TITLE = {Electric-magnetic duality and the geometric {L}anglands
		program},
	JOURNAL = {Commun. Number Theory Phys.},
	FJOURNAL = {Communications in Number Theory and Physics},
	VOLUME = {1},
	YEAR = {2007},
	NUMBER = {1},
	PAGES = {1--236},
}

\bib{KatzKloosterman}{book}
{
	AUTHOR = {Katz, N. M.},
	TITLE = {Gauss sums, {K}loosterman sums, and monodromy groups},
	SERIES = {Annals of Mathematics Studies},
	VOLUME = {116},
	PUBLISHER = {Princeton University Press, Princeton, NJ},
	YEAR = {1988},
	PAGES = {x+246},
}

\bib{KatzBook}{book}
{
	AUTHOR = {Katz, N. M.},
	TITLE = {Exponential sums and differential equations},
	SERIES = {Annals of Mathematics Studies},
	VOLUME = {124},
	PUBLISHER = {Princeton University Press, Princeton, NJ},
	YEAR = {1990},
	PAGES = {xii+430},
}

\bib{KatzRigid}{book}
{
	AUTHOR = {Katz, N. M.},
	TITLE = {Rigid local systems},
	SERIES = {Annals of Mathematics Studies},
	VOLUME = {139},
	PUBLISHER = {Princeton University Press, Princeton, NJ},
	YEAR = {1996},
	PAGES = {viii+223},
}

\bib{Kedlaya}{article}
{
	Author={Kedlaya, K. S. },
	Title={Etale and crystalline companions, I},
	year={2018},
	eprint={https://arxiv.org/abs/1811.00204},
	archivePrefix={arXiv},
	primaryClass={math.NT}
}

\bib{Koornwinder}{incollection}
{
	AUTHOR = {Koornwinder, Tom H.},
	TITLE = {Jacobi functions and analysis on noncompact semisimple {L}ie
		groups},
	BOOKTITLE = {Special functions: group theoretical aspects and applications},
	SERIES = {Math. Appl.},
	PAGES = {1--85},
	PUBLISHER = {Reidel, Dordrecht},
	YEAR = {1984},
}

\bib{Lafforgue}{article}
{
	AUTHOR = {Lafforgue, L.},
	TITLE = {Chtoucas de {D}rinfeld et correspondance de {L}anglands},
	JOURNAL = {Invent. Math.},
	FJOURNAL = {Inventiones Mathematicae},
	VOLUME = {147},
	YEAR = {2002},
	NUMBER = {1},
	PAGES = {1--241},
}

\bib{LaumonLanglands}{article}
{
	Author = {Laumon, G.},
	fjournal = {Duke Mathematical Journal},
	journal = {Duke Math. J.},
	number = {2},
	pages = {309--359},
	publisher = {Duke University Press},
	title = {Correspondance de Langlands géométrique pour les corps de fonctions},
	volume = {54},
	year = {1987},
}

\bib{LRS}{article}
{
	Author = {Laumon, G.},
	Author={Rapoport, M.}, 
	Author={Stuhler, U.},
	journal = {Invention. Math.},
	pages = {217--338},
	title = {D-elliptic sheaves and the Langlands correspondence},
	volume = {113},
	year = {1993},
}

\bib{LT}{article}
{
	Author = {Lam, T.},
	Author = {Templier, N.},
	title = {The mirror conjecture for minuscule flag varieties}	
	month = {05},
	eprint={https://arxiv.org/abs/1705.00758},
	year = {2017},
}

\bib{MirkovicVilonen}{article}
{
	AUTHOR = {Mirkovi\'{c}, I.},
	Author = {Vilonen, K.},
	TITLE = {Geometric {L}anglands duality and representations of algebraic
		groups over commutative rings},
	JOURNAL = {Ann. of Math. (2)},
	FJOURNAL = {Annals of Mathematics. Second Series},
	VOLUME = {166},
	YEAR = {2007},
	NUMBER = {1},
	PAGES = {95--143},
	ISSN = {0003-486X},
}

\bib{Miyatani}{article}
{
	Author={Miyatani, K.},
	Title={$p$-adic generalized hypergeometric equations from the viewpoint of arithmetic D-modules},
	year={2016},
	eprint={https://arxiv.org/abs/1607.04852},
	archivePrefix={arXiv},
	primaryClass={math.AG}
}

\bib{NadlerYun}{article}
{
	AUTHOR = {Nadler, D.},
	Author = {Yun, Z.},
	TITLE = {Geometric {L}anglands correspondence for {$\rm SL(2)$}, {$\rm
			PGL(2)$} over the pair of pants},
	JOURNAL = {Compos. Math.},
	FJOURNAL = {Compositio Mathematica},
	VOLUME = {155},
	YEAR = {2019},
	NUMBER = {2},
	PAGES = {324--371},
	ISSN = {0010-437X},
}

\bib{Ono}{article}
{
	AUTHOR = {Ono, K.},
	TITLE = {Values of {G}aussian hypergeometric series},
	JOURNAL = {Trans. Amer. Math. Soc.},
	FJOURNAL = {Transactions of the American Mathematical Society},
	VOLUME = {350},
	YEAR = {1998},
	NUMBER = {3},
	PAGES = {1205--1223},
}

\bib{Opdam}{book}
{
	AUTHOR = {Opdam, Eric M.},
	TITLE = {Lecture notes on {D}unkl operators for real and complex
		reflection groups},
	SERIES = {MSJ Memoirs},
	VOLUME = {8},
	NOTE = {With a preface by Toshio Oshima},
	PUBLISHER = {Mathematical Society of Japan, Tokyo},
	YEAR = {2000},
	PAGES = {viii+90},
	ISBN = {4-931469-08-6},
}

\bib{PRW}{article}
{
	AUTHOR={Pech, C.},
	Author={Rietsch, K.},
	Author={Williams, L.},
	TITLE = {On {L}andau-{G}inzburg models for quadrics and flat sections
		of {D}ubrovin connections},
	JOURNAL = {Adv. Math.},
	FJOURNAL = {Advances in Mathematics},
	VOLUME = {300},
	YEAR = {2016},
	PAGES = {275--319},
}

\bib{RLYG}{article}
{
	AUTHOR= {Reeder, M.},
	Author={Levy, P.},
	Author={Yu, J.},
	Author={Gross, B.},
	TITLE = {Gradings of positive rank on simple {L}ie algebras},
	JOURNAL = {Transform. Groups},
	FJOURNAL = {Transformation Groups},
	VOLUME = {17},
	YEAR = {2012},
	NUMBER = {4},
	PAGES = {1123--1190},	
}

\bib{RY}{article}
{
	Author = {Reeder, M.},
	Author = {Yu, J.},
	TITLE = {Epipelagic representations and invariant theory},
	JOURNAL = {J. Amer. Math. Soc.},
	FJOURNAL = {Journal of the American Mathematical Society},
	VOLUME = {27},
	YEAR = {2014},
	NUMBER = {2},
	PAGES = {437--477},
}

\bib{TemplierSawin}{article}
{	
	Author={Sawin, W.},
	Author={Templier, N.},
	title={On the Ramanujan conjecture for automorphic forms over function fields I. Geometry},
	year={2018},
	eprint={https://arxiv.org/abs/1805.12231},
}

\bib{Serre}{book}
{
	AUTHOR = {Serre, J.-P.},
	TITLE = {Algebraic groups and class fields},
	SERIES = {Graduate Texts in Mathematics},
	VOLUME = {117},
	NOTE = {Translated from the French},
	PUBLISHER = {Springer-Verlag, New York},
	YEAR = {1988},
	PAGES = {x+207},
}

\bib{Slater}{book}
{
	AUTHOR = {Slater, L.},
	TITLE = {Generalized hypergeometric functions},
	PUBLISHER = {Cambridge University Press, Cambridge},
	YEAR = {1966},
	PAGES = {xiii+273},
}

\bib{Varchenko}{book}
{
	AUTHOR = {Varchenko, A.},
	TITLE = {Special functions, {KZ} type equations, and representation
		theory},
	SERIES = {CBMS Regional Conference Series in Mathematics},
	VOLUME = {98},
	PUBLISHER = {Published for the Conference Board of the Mathematical
		Sciences, Washington, DC; by the American Mathematical
		Society, Providence, RI},
	YEAR = {2003},
	PAGES = {viii+118},
}

\bib{Vinberg}{article} 
{ 
	AUTHOR = {Vinberg, E. B.},
	TITLE = {The {W}eyl group of a graded {L}ie algebra},
	JOURNAL = {Izv. Akad. Nauk SSSR Ser. Mat.},
	FJOURNAL = {Izvestiya Akademii Nauk SSSR. Seriya Matematicheskaya},
	VOLUME = {40},
	YEAR = {1976},
	NUMBER = {3},
	PAGES = {488--526, 709},
}

\bib{Witten}{article}
{
	AUTHOR = {Witten, E.},
	TITLE = {Gauge theory and wild ramification},
	JOURNAL = {Anal. Appl. (Singap.)},
	FJOURNAL = {Analysis and Applications},
	VOLUME = {6},
	YEAR = {2008},
	NUMBER = {4},
	PAGES = {429--501},
	ISSN = {0219-5305},
}

\bib{XuZhu}{article}
{
	Author = {Xu, D.},
	Author = {Zhu, X.},
	title  = {Bessel $F$-isocrystals for reductive groups},
	year={2019},
	eprint={https://arxiv.org/abs/1910.13391},
}

\bib{Yu}{article}
{
	AUTHOR = {Yu, J.},
	TITLE = {Smooth models associated to concave functions in {B}ruhat-{T}its theory},
	BOOKTITLE = {Autour des sch\'{e}mas en groupes. {V}ol. {III}},
	SERIES = {Panor. Synth\`eses},
	VOLUME = {47},
	PAGES = {227--258},
	PUBLISHER = {Soc. Math. France, Paris},
	YEAR = {2015},
}

\bib{YunGalois}{article}
{
	AUTHOR = {Yun, Z.},
	Title={Motives with exceptional Galois groups and the inverse Galois problem},
	Journal={Invent. Math.}
	Year={2014},
	Volume={196},
	Pages={267--337}
}

\bib{YunEvans}{article}
{
	AUTHOR = {Yun, Z.},
	TITLE = {Galois representations attached to moments of {K}loosterman
		sums and conjectures of {E}vans},
	NOTE = {Appendix B by Christelle Vincent},
	JOURNAL = {Compos. Math.},
	FJOURNAL = {Compositio Mathematica},
	VOLUME = {151},
	YEAR = {2015},
	NUMBER = {1},
	PAGES = {68--120},
}

\bib{YunCDM}{incollection}
{
	AUTHOR = {Yun, Z.},
	TITLE = {Rigidity in automorphic representations and local systems},
	BOOKTITLE = {Current developments in mathematics 2013},
	PAGES = {73--168},
	PUBLISHER = {Int. Press, Somerville, MA},
	YEAR = {2014},
}

\bib{YunEpipelagic}{article}
{
	AUTHOR = {Yun, Z.},
	Title={Epipelagic representations and rigid local systems},
	Journal={Selecta Math. (N.S.)}, 
	Year={2016},
	pages={1195--1243},
	Volume={22}, 
}

\bib{ZagierDifferentialEquation}{incollection}
{
	AUTHOR = {Zagier, D.},
	TITLE = {The arithmetic and topology of differential equations},
	BOOKTITLE = {European {C}ongress of {M}athematics},
	PAGES = {717--776},
	PUBLISHER = {Eur. Math. Soc., Z\"{u}rich},
	YEAR = {2018},
}

\bib{ZagierZinger}{incollection}
{
	AUTHOR = {Zagier, D.},
	Author = {Zinger, A.},
	TITLE = {Some properties of hypergeometric series associated with
		mirror symmetry},
	BOOKTITLE = {Modular forms and string duality},
	SERIES = {Fields Inst. Commun.},
	VOLUME = {54},
	PAGES = {163--177},
	PUBLISHER = {Amer. Math. Soc., Providence, RI},
	YEAR = {2008},
}

\bib{Zhu}{article}
{
	AUTHOR = {Zhu, X.},
	TITLE = {Frenkel-{G}ross' irregular connection and
		{H}einloth-{N}g\^{o}-{Y}un's are the same},
	JOURNAL = {Selecta Math. (N.S.)},
	FJOURNAL = {Selecta Mathematica. New Series},
	VOLUME = {23},
	YEAR = {2017},
	NUMBER = {1},
	PAGES = {245--274},
}

\bib{Wadim}{article}
{
	AUTHOR = {Zudilin, W.},
	TITLE = {Hypergeometric heritage of {W}. {N}. {B}ailey},
	NOTE = {With an appendix containing letters from Bailey to Freeman
		Dyson},
	JOURNAL = {Notices of the International Congress of Chinese
		Mathematicians},
	VOLUME = {7},
	YEAR = {2019},
	NUMBER = {2},
	PAGES = {32--46},
}

\end{biblist}
\end{bibdiv} 

\end{document}